\begin{document}
\bibliographystyle{alpha}

\newcommand{\cross}{\begin{picture}(12,10)(-2,0)
\put(-8,0){\Large$\longrightarrow$}
\put(0,0){\Large$\downarrow$}
\end{picture}}

\newcommand{\cn}[1]{\overline{#1}}
\newcommand{\e}[0]{\epsilon}
\newcommand{\es}[0]{\emptyset}

\newcommand{\GUE}{{\rm GUE}}
\newcommand{\var}{{\rm var}}
\newcommand{\Prob}{{\rm Prob}}
\newcommand{\cdrp}{{\rm CDRP}}
\newcommand{\ddrp}{{\rm DDRP}}
\newcommand{\ZZ}{\ensuremath{\mathcal{Z}}}
\newcommand{\h}{\ensuremath{\mathcal{H}}}

\newcommand{\EE}{\ensuremath{\mathbb{E}}}
\newcommand{\PP}{\ensuremath{\mathbb{P}}}
\newcommand{\N}{\ensuremath{\mathbb{N}}}
\newcommand{\R}{\ensuremath{\mathbb{R}}}
\newcommand{\Rplus}{\ensuremath{(0,\infty)}}
\newcommand{\RNplus}{\ensuremath{(0,\infty)^N}}
\newcommand{\C}{\ensuremath{\mathbb{C}}}
\newcommand{\Z}{\ensuremath{\mathbb{Z}}}
\newcommand{\Q}{\ensuremath{\mathbb{Q}}}
\newcommand{\T}{\ensuremath{\mathbb{T}}}
\newcommand{\TN}{\ensuremath{\mathbb T_N}}
\newcommand{\TNstar}{\ensuremath{\mathbb T_N^*}}
\newcommand{\SN}[1]{\ensuremath{\mathbb Y_{#1}}}
\newcommand{\SNstar}[1]{\ensuremath{\mathbb Y^{*}_{#1}}}
\newcommand{\Yind}{\ensuremath{\mathcal{L}^{\rm{ind}}}}
\newcommand{\edge}{\textrm{edge}}
\newcommand{\E}[0]{\mathbb{E}}
\newcommand{\OO}[0]{\Omega}
\newcommand{\F}[0]{\mathfrak{F}}
\def \Ai {{\rm Ai}}
\newcommand{\G}[0]{\mathfrak{G}}
\newcommand{\ta}[0]{\theta}
\newcommand{\w}[0]{\omega}
\newcommand{\ra}[0]{\rightarrow}
\newcommand{\vectoro}{\overline}
\newtheorem{theorem}{Theorem}[section]
\newtheorem{partialtheorem}{Partial Theorem}[section]
\newtheorem{conj}[theorem]{Conjecture}
\newtheorem{lemma}[theorem]{Lemma}
\newtheorem{proposition}[theorem]{Proposition}
\newtheorem{corollary}[theorem]{Corollary}
\newtheorem{claim}[theorem]{Claim}
\newtheorem{experiment}[theorem]{Experimental Result}
\newtheorem{prop}{Proposition}
\numberwithin{equation}{section}

\def\todo#1{\marginpar{\raggedright\footnotesize #1}}
\def\change#1{{\color{green}\todo{change}#1}}
\def\note#1{\textup{\textsf{\color{blue}(#1)}}}

\theoremstyle{definition}
\newtheorem{rem}[theorem]{Remark}

\theoremstyle{definition}
\newtheorem{com}[theorem]{Comment}

\theoremstyle{definition}
\newtheorem{definition}[theorem]{Definition}
\newtheorem{example}[theorem]{Example}

\theoremstyle{definition}
\newtheorem{definitions}[theorem]{Definitions}

\newcommand{\be}{\begin{equation}}  \newcommand{\ee}{\end{equation}}
\def\e{\varepsilon}
\def\ind{\mathbf{1}}
\def\ddd{\displaystyle}
\newcommand{\nn}{\nonumber}
\providecommand{\abs}[1]{\vert#1\vert}
\providecommand{\norm}[1]{\Vert#1\Vert}
\providecommand{\pp}[1]{\langle#1\rangle}
\newcommand{\fl}[1]{\lfloor{#1}\rfloor}
\newcommand{\ce}[1]{\lceil{#1}\rceil}
\def\wt{\widetilde}  \def\wh{\widehat} \def\wb{\overline}
\def\zhat{\hat z}  

\newcommand\Laker[2]{\Lambda^{#1}_{#2}}  
\newcommand\Piker[2]{\Pi^{#1}_{#2}}  
\newcommand\Kker[2]{K^{#1}_{#2}}  

\newcommand\Mker[2]{M^{#1}_{#2}}  

\newcommand\Kbarker[2]{\bar K^{#1}_{#2}}  
\newcommand\Lker[2]{L^{#1}_{#2}}  
\newcommand\Pker[2]{P^{#1}_{#2}}  
\newcommand\Pbarker[2]{\bar P^{#1}_{#2}}  
\newcommand\Rker[2]{R^{#1}_{#2}}  
\newcommand\Dcon[2]{D^{#1}_{#2}}    

\newcommand\wfunc[2]{w^{#1}_{#2}}    

\newcommand\wf[2]{\Psi^{#1}_{#2}}    
\newcommand\awf[2]{\psi^{#1}_{#2}}    

\newcommand\thetar{\hat\theta}  
\newcommand\thetac{\theta}  
\newcommand\thetacv{\theta}  
\newcommand\thetarc{\gamma}  

\newcommand\proj{\phi}  

\newcommand\zarr{z}
\newcommand\zratio{\eta}  




\title[Tropical Combinatorics and Whittaker functions]{Tropical Combinatorics and Whittaker functions}

\author[I. Corwin]{Ivan Corwin}
\address{I. Corwin\\
Massachusetts Institute of Technology\\
Department of Mathematics\\
77 Massachusetts Avenue, Cambridge, MA 02139-4307, USA\newline
Clay Mathematics Institute\\ 10 Memorial Blvd. Suite 902\\ Providence, RI 02903, USA}
\email{ivan.corwin@gmail.com}
\author[N. O'Connell]{Neil O'Connell}
\address{N. O'Connell\\
  Mathematics Institute \\
  University of Warwick\\
  Coventry CV4 7AL, UK}
\email{N.M.O-Connell@warwick.ac.uk}
\author[T. Sepp\"{a}l\"{a}inen]{Timo Sepp\"{a}l\"{a}inen}
\address{T. Sepp\"{a}l\"{a}inen\\
  Department of Mathematics\\
  University of Wisconsin-Madison\\
  419 Van Vleck Hall\\
  Madison, WI 53706-1388, USA}
\email{seppalai@math.wisc.edu}
\author[N. Zygouras]{Nikolaos Zygouras}
\address{N. Zygouras\\
  Department of Statistics \\
  University of Warwick\\
  Coventry CV4 7AL, UK}
\email{N.Zygouras@warwick.ac.uk}
\date{\today}

\begin{abstract}

We establish a fundamental connection between the geometric RSK correspondence
and $GL(N,\R)$-Whittaker functions, analogous to the well known relationship between
the RSK correspondence and Schur functions.  This gives rise to a natural family of
measures associated with $GL(N,\R)$-Whittaker functions which are the analogues in
this setting of the Schur measures on integer partitions.  The corresponding analogue
of the Cauchy-Littlewood identity can be seen as a generalisation of an integral
identity for $GL(N,\R)$-Whittaker functions due to Bump and Stade.
As an application, we obtain an explicit integral formula for the Laplace transform of
the law of the partition function associated with a one-dimensional directed polymer
model with log-gamma weights recently introduced by one of the authors (TS).
\end{abstract}
\maketitle

\section{Introduction}

The Robinson-Schensted-Knuth (RSK) correspondence is a combinatorial mapping which plays a fundamental
role in the theory of Young tableaux, symmetric functions and representation theory.  In particular, it provides
a direct combinatorial proof of the Cauchy-Littlewood identity
$$\sum_\lambda s_\lambda(x) s_\lambda(y) = \prod_{i,j}(1-x_iy_j)^{-1},$$
where the sum is over integer partitions and $s_{\lambda}$ denotes the Schur function associated with the
partition $\lambda$.  The Schur function $s_\lambda(x)$ is a symmetric function in the variables
$x=(x_1,x_2,\ldots)$ defined by
$$s_\lambda(x)=\sum_{T} x^T$$
where the sum is over semistandard tableaux $T$ with shape $\lambda$ and $x^T=x_1^{\mu_1}x_2^{\mu_2}\ldots$,
where $\mu_i$ is the number of $i$'s in $T$.  For more background on symmetric functions we refer the reader
to \cite{m}.

The RSK mapping is defined by a combinatorial algorithm which associates to each matrix $M=\{m_{ij}\}$ with
non-negative integer entries a pair $(P,Q)$ of semi-standard tableaux with the same shape.  By a version of
Greene's theorem~\cite{g,Kir}, it can also be defined via expressions in the $(\max,+)$ semi-ring. This was extended to matrices with
real entries by Berenstein and Kirillov~\cite{bki}.  Replacing these expressions by their analogues in the usual $(+,\times)$ algebra,
A.N. Kirillov~\cite{Kir} introduced a geometric lifting of the Berenstein-Kirillov correspondence which he called
the `tropical RSK correspondence', in honour of M.-P. Sch\"utzenberger (1920--1996).
However, for many readers nowadays the word `tropical' indicates just the opposite,
so to avoid confusion we will refer to Kirillov's construction as the {\em geometric} RSK correspondence,
as in the theory of {\em geometric crystals}~\cite{BK,BK2}, which is closely related. This correspondence has been studied further from
a dynamical point of view by Noumi and Yamada~\cite{NY} and related to Dodgson's condensation method for computing determinants in~\cite{dk}.

It is natural to ask if there is an analogue of the Cauchy-Littlewood identity for the geometric RSK correspondence.
A priori it is not at all clear that this should exist:  something quite remarkable needs to happen in order to see
the product structures on both sides which characterise this identity.   In this paper we show that in fact
these product structures do appear in the correct formulation and,
moreover, the role of the Schur functions is now played by $GL(N,\R)$-Whittaker functions.  In a particular case
(corresponding to square matrices) the analogue of the Cauchy-Littlewood identity is in fact a well known
integral identity for $GL(N,\R)$-Whittaker functions due to Bump and Stade~\cite{B,St,GLO}.

Before stating our main results, we will first explain the origin of these product structures in the context of the
RSK correspondence and its role in the proof of the Cauchy-Littlewood identity.  The RSK mapping associates
to each matrix $M=\{m_{ij}\}$ with non-negative integer entries a pair $(P,Q)$ of semi-standard tableaux with
the same shape.  If $M$ is an $n\times N$ matrix then $P$ has entries from $\{1,2,\ldots,N\}$ and $Q$ has entries
from $\{1,2,\ldots,n\}$.  Moreover, the vector $(R_1,\ldots,R_n)$ of row sums of $M$ is the type of $P$, that is,
$R_1$ is the number of $1$'s in the semistandard tableau $P$, $R_2$ is the number of $2$'s, and so on.
Similarly, the vector of column sums $(C_1,\ldots,C_N)$ is the type of $Q$.  Note that this reflects the well
known symmetry property of RSK, namely that if $M\mapsto (P,Q)$ then $M^t\mapsto (Q,P)$.  It follows that
$$\prod_{i,j} (x_i y_j)^{m_{ij}} = \prod_i x^{R_i}\prod_j y^{C_j} = x^P y^Q ,$$
and summing both sides gives the Cauchy-Littlewood identity:  on the left we sum over all matrices with
non-negative integer entries and on the right we sum over all pairs of semistandard tableaux $P$ and $Q$
with the same shape.

Another way of interpreting the above argument is as follows.  Let $p_1,\ldots,p_n$ and $q_1,\ldots,q_N$ be a collection of positive numbers such that $0<p_iq_j<1$ for all $i$ and $j$,
and consider the probability measure on $n\times N$ matrices defined by
$$P(\{M\})= \prod_{i,j}(1-p_iq_j) \prod_{i,j} (p_i q_j)^{m_{ij}}.$$
From the above discussion, the push-forward of this probability measure
onto the shape of the tableaux obtained under the RSK mapping is given by
\begin{eqnarray}\label{OkSchur}
\tilde P(\{\lambda\})= \prod_{i,j}(1-p_iq_j) s_\lambda(p) s_\lambda(q).
\end{eqnarray}
Now, the Cauchy-Littlewood identity is essentially equivalent to the fact that $\tilde P$ is a probability measure on the set
of integer partitions.  Such probability measures are known as {\em Schur measures} \cite{OkInfWedge}.

We now turn to the geometric RSK correspondence.  The input is an $n\times N$ matrix $X=\{x_{ij}\}$ with strictly positive real entries and, supposing here for convenience that $n\ge N$, the analogue of the $P$-tableau is a triangular array $(z_{k,\ell})_{1\le \ell\le k\le N}$ of non-negative real numbers.  The `shape' of $P$ is the vector $z_{N,\cdot}$.
Let $\hat\theta_1,\ldots,\hat\theta_n$ and $\theta_1,\ldots,\theta_N$ be a collection of real numbers satisfying
$\hat\theta_i+\theta_j>0$ for all $i$ and $j$, and consider the product measure on input matrices $X$ defined by
$$\mu(dX)= \prod_{i,j} \nu_{\hat\theta_i+\theta_j} (dx_{ij}),$$
where $\nu_\theta$ denotes the distribution of the inverse of a Gamma random variable
$$\nu_\theta(dx)= \frac{1}{\Gamma(\theta)} x^{-\theta-1}\exp\left\{-\frac{1}{x}\right\} dx.$$
The main result of this paper (Theorem \ref{main corollary}) is that the push-forward of the probability measure $\mu$
under the geometric RSK mapping onto the `shape' $(z_{N,\ell})_{1\leq \ell\leq N}$ is given by
\begin{eqnarray}\label{intro9}
\mu_n^N(dy)=
 \Psi_\theta(y)
\int_{\iota \mathbb{R}^N} d\lambda \, s_N(\lambda)
 \Psi_{-\lambda}(y)
 \prod_{m=1}^n\prod_{i=1}^N\frac{\Gamma(\hat\theta_m+\lambda_i)}{\Gamma(\theta_i+\hat\theta_m)}\,\,\,\prod_{i=1}^N \frac{dy_i}{y_i},
\end{eqnarray}
where the functions $\Psi_{\nu}(y)$ are $GL(N,\R)$-Whittaker functions (defined in Section \ref{WhitFUnSect} below)
and $$s_N(\lambda)=\frac{1}{(2\pi \iota)^N N!} \prod_{j\ne k} \Gamma(\lambda_j-\lambda_k)^{-1}.$$
The probability measure $\mu_n^N$ is the analogue of the Schur measure in this setting and shall be referred to
as a {\em Whittaker measure}; the fact that it integrates to one is the analogue of the Cauchy-Littlewood identity.

In the special case $N=n$, we obtain the following simplification:
\begin{equation}\label{se}
\mu_N^N(dy)=\prod_{m=1}^N\prod_{i=1}^N\Gamma(\theta_i+\hat\theta_m)^{-1} e^{-y_N^{-1}}
\Psi_\theta(y) \Psi_{\hat\theta}(y)\prod_{i=1}^N\frac{dy_i}{y_i}.
\end{equation}
In this case, the analogue of the Cauchy-Littlewood identity reduces to
$$\int_{\R_+^N} e^{-y_N^{-1}} \Psi_\theta(y) \Psi_{\hat\theta}(y)\prod_{i=1}^N\frac{dy_i}{y_i}
= \prod_{m=1}^N\prod_{i=1}^N\Gamma(\theta_i+\hat\theta_m),$$
which can be seen to be equivalent to a Whittaker integral identity due to Bump and Stade~\cite{B,St,GLO}.
We remark that, while this gives a new interpretation of the Bump-Stade Whittaker integral identity, we do
not give a new proof of this identity in the present paper; in fact, we use it to obtain the simplification~(\ref{se}).

We prove Theorem \ref{main corollary}  by considering a dynamical version of the geometric RSK construction due to Noumi and Yamada~\cite{NY}, allowing $n$ to increase as we successively add rows to the input matrix $X$. The image triangular array $z(n)$ evolves as a Markov process in discrete time $n$  subject to a particular entrance law for $n<N$. First we prove that the shape $z_{N,\cdot}(n)$ evolves marginally as a Markov process in its own filtration.  The Markov property of $z_{N,\cdot}(n)$ relies (via the theory of Markov functions) on an algebraic intertwining relation between Markov kernels for $z(n)$ and $z_{N,\cdot}(n)$ as well as on a  limiting argument which shows that the result holds for the particular entrance law for $z(n)$ dictated by  the geometric RSK correspondence. Secondly we prove that this Markov process can be diagonalized in terms of $GL(N,\R)$-Whittaker functions. This yields the formula \eqref{intro9}
for the fixed $n$ probability distribution of the shape.

Our results have an important application to the study of directed polymers, analogous to the role of the RSK correspondence
in the study of longest increasing subsequences and last passage percolation~\cite{AD,BDJ,KJ,KJ2,LS,VK}.
According to the definition of geometric RSK,
we can write $z_{N,1}(n) = \sum_{\pi}\prod_{(i,j)\in\pi} x_{ij}$ where the summation is over all 'up/right' lattice paths in $\Z^2$
from $(1,1)$ to $(n,N)$.  Under the measure $\mu$, this random variable can be interpreted as a partition function for a
directed polymer in a random environment given by the weights $x_{ij}$.  For a particular (homogeneous) choice of the
parameters $\hat\theta_i+\theta_j=\gamma$ for all $i$ and $j$, this model was introduced and studied in the paper~\cite{S}.
In particular, setting $Z_n=z_{n,1}(n)$, it was shown in \cite{S} that the free energy is given explicitly by
 \begin{equation}
\lim_{n\to \infty} \frac1n\log Z_n = -2\Psi(\gamma/2)
\end{equation}
almost surely, where $\Psi(x)=[\log \Gamma]'(x)$ is the digamma function, and moreover
\begin{equation}
 \limsup_{n\to \infty} \frac{\var \log Z_n}{n^{2/3}} <\infty .
\end{equation}
These asymptotic results were obtained via the observation that for this polymer model
there is an analogue of the output theorem (or Burke property) for the single server queue.  This observation, combined
with recent progress on a related model in~\cite{OCon}, provided the inspiration for the present work.   In fact, the model
considered in \cite{OCon} is a degeneration of the one we presently consider (c.f. Section \ref{semi-disc}).

Using our main result, we obtain the following explicit formula for the Laplace transform of the law of the partition function $z_{N,1}(n)$
under the measure $\mu$ (this statement is also contained in Theorem \ref{main corollary}):
\begin{equation}\label{fabove}
\EE \left[e^{-s z_{N,1}(n)}\right] =
  \int_{\iota \mathbb{R}^N}d\lambda\, s^{\sum_{i=1}^N(\theta_i-\lambda_i)}
          \prod_{1\leq i,j\leq N}\Gamma(\lambda_i-\theta_j) \,\prod_{m=1}^n\prod_{i=1}^N\frac{\Gamma(\lambda_i+\hat\theta_m)}{\Gamma(\theta_i+\hat\theta_m)} s_N(\lambda),
\end{equation}
where the poles of the functions $\Gamma(\lambda_i-\theta_j)$ and $\Gamma(\lambda_i+\hat\theta_m)$ are not encountered as we may assume without loss of generality that $\thetar_m>0$ for all $m$ and $\thetac_j<0$ for all $j$.

Our formula (\ref{fabove}) has recently been applied in \cite{BCR} to prove the following asymptotic result:
there exists $\gamma^*>0$ such that the inverse-gamma weight polymer free energy with parameter
$\gamma\in (0,\gamma^*)$ has limiting fluctuation distribution given by
\begin{equation}
\lim_{n\to \infty} \PP\left(\frac{ \log Z_n - n\bar{f}_{\gamma}}{n^{1/3}}\leq r\right) = F_{{\rm GUE}}\left(\left(\frac{\bar{g}_{\gamma}}{2}\right)^{-1/3} r\right)
\end{equation}
where $\bar{f}_{\gamma} = -2\Psi(\gamma/2)$, $\bar{g}_{\gamma} = -2 \Psi''(\gamma/2)$ and $F_{{\rm GUE}}$ is the GUE Tracy-Widom distribution function.  The restriction on the parameter $\gamma$ is present for purely technical reasons.

There have been many recent developments on the Kardar-Parisi-Zhang (KPZ) equation~\cite{ICreview,FSreview,ss}
and various discretisations of this equation which have an underlying algebraic structure.
The latter include the Whittaker measures and processes introduced in the present paper, and a particular family
of degenerations of these which were introduced earlier in \cite{OCon}.  These in turn can be seen as degenerations of the
Macdonald measures and processes introduced in \cite{BorCor} which gives access to the theory of Macdonald polynomials
and incorporates a wide range of other interesting degenerations including $q$-deformations of the Whittaker measures and
processes discussed here.  A feature which is important for the Whittaker measures we consider here is the fundamental
link to the geometric RSK correspondence.  In particular, our main result is not a degeneration of any of the main results in the
more recent paper \cite{BorCor}.

Further developments appear in the recent work \cite{OSZ} which explores this connection from a combinatorial point of view, in contrast to the dynamical approach of the present work. The paper \cite{OSZ} provides further insight into the appearance of Whittaker functions in this setting and, among other things, 
studies the restriction of the geometric RSK mapping to symmetric matrices.

In another direction, it is possible
to define an analogue of the geometric RSK mapping directly in the continuum setting of the KPZ equation, with input given
by a two-dimensional strip of space-time white noise~\cite{ow}.  Some recent progress towards understanding the law of the
analogue of the `shape' in this setting has been made in \cite{BorCor,ch}.


The outline of the paper is as follows.
In Section \ref{Kirillov corres} we introduce the geometric RSK correspondence via two equivalent approaches: row insertion procedure and non-intersecting lattice paths.
Section \ref{raninputsec} provides the main set of results of this paper. We state the main algebraic content of the paper in the form of the intertwining relation of
Proposition \ref{itpr1}; we define Whittaker functions; we state the paper's main results -- Theorems \ref{markovprojthm} and \ref{main corollary}). Within that section we also record the invariant distribution of dynamical geometric RSK correspondence, and explain the connection Pitman's $2M-X$ theorem. Section \ref{scalinglimits} details how in certain scaling limits of our work, one recovers previously discovered results. Proofs of our main results are contained in Section \ref{proofsec}.

\bigskip

\noindent {\bf Acknowledgements.}
We thank Jinho Baik, Gerard Ben Arous, Philippe Biane, Alexei Borodin, Percy Deift, Jeremy Quastel, Pierre van Moerbeke, Herbert Spohn, Craig Tracy and Lauren Williams for helpful discussions, and acknowledge MSRI, IMPA, Mathematisches Forschungsinstitut Oberwolfach and the University of Warwick (with financial support from grants EP/I014829/1 and IRG-246809) for hospitality during this project. IC is partially supported by the NSF through PIRE grant OISE-07-30136 and DMS-1208998 as well as by Microsoft Research through the Schramm Memorial Fellowship, and by the Clay Mathematics Institute through a Clay Research Fellowship. NO'C is partially supported by EPSRC grant EP/I014829/1. TS is partially supported by National Science Foundation grant DMS-1003651 and by the Wisconsin Alumni Research Foundation. NZ is partially supported by a Marie Curie International Reintegration Grant within the 7th European Community Framework Programme, IRG-246809.

\section{Geometric RSK correspondence}\label{Kirillov corres}

In this section we describe an extension of Kirillov's `tropical' (or presently `geometric') RSK correspondence \cite{Kir} to rectangular matrices. We follow mainly the development in  \cite{NY} but with a slightly different convention for  indices. We describe first a geometric row insertion procedure, and then expand this into a procedure for  inserting a word into a triangular array (see Example \ref{example1} for a step-by-step illustration of these procedures).   Repeated insertions create a temporal evolution of the array.   In addition to insertion into an already existing array we consider insertion into an initially empty array.  This latter version will have an equivalent description in terms of weights of configurations  of lattice paths.

\subsection{Geometric RSK via row insertion}

\begin{definition}\label{NYdef}
Let $1\le \ell\le N$. Consider two {\it words} $\xi=(\xi_{\ell},\ldots,\xi_N)$ and $b=(b_\ell,\ldots, b_N)$ with strictly positive real entries. {\it Geometric row insertion} of the word $b$ into the word $\xi$    transforms  $(\xi,b)$ into a new pair $(\xi',b')$ where $\xi'=(\xi'_{\ell},\ldots, \xi'_N)$ and $b'=(b'_{\ell+1},\ldots, b'_N)$. The transformation is notated and defined as follows:
\begin{equation}
\begin{array}{ccc}
& b & \\
\xi & \cross & \xi' \\
& b'
\end{array}
\quad
\textrm{where} \quad
\begin{cases}
\xi'_\ell=b_\ell\xi_\ell, &\\[4pt]
\xi'_k =b_k(\xi_{k-1}' + \xi_k), & \ell+1\le k\le N\\[5pt]
b'_{k} = b_k \ddd\frac{ \xi_{k}\xi'_{k-1}}{\xi_{k-1}\xi'_k}, & \ell+1\leq k\leq N.
\end{cases}
\label{g-row-ins}\end{equation}
If $\ell=N$ output $b'$ is empty and we write $b'=\es$.  
In addition to $\xi\in(0,\infty)^{N-\ell+1}$ we admit the case
$\xi=(1,0,\dotsc,0)$.
  This will correspond to row insertion into an initially empty word.
With  $e^{(k)}_1=(1,0,\dotsc,0)$ denoting the first unit $k$-vector,
the notation and  definition are now
\begin{equation}
\begin{array}{ccc}
& b & \\
e_1^{(N-\ell+1)} & \cross & \xi'  \\[3pt]
\end{array}
\quad
\textrm{where} \quad  \xi'_{k}=\prod_{i=\ell}^{k} b_{i},  \quad \ell\le k\le N.
\label{es-row-ins}\end{equation}
This is consistent with \eqref{g-row-ins} except that   output $b'$ is not defined and hence
not displayed in the diagram above.
\end{definition}

The next step is
geometric row insertion of a word into a  triangular array.
For $N\ge 1$ let $\TN = (\zarr_{k\ell}: 1\le \ell\le k\le N, \textrm{and } \zarr_{k\ell}\in(0,\infty))$, i.e. the set of triangular arrays with positive real entries.   The bottom picture of Figure \ref{pathfig} illustrates
an element of $\T_5$.  $(z_{k\ell})$ consists of rows indexed by $k$ and southeast-pointing
diagonals indexed by $\ell$.

\begin{definition}\label{NYdef-z}
  Given   $\zarr\in \TN$ and a word $b\in(0,\infty)^N$.  Geometric row insertion of   $b$ into   $\zarr$    outputs a new triangular array $\zarr'\in \TN$. This procedure is denoted by
\begin{equation}
\zarr' = \zarr\leftarrow b
\end{equation}
and it consists of $N$ iterations of the basic row insertion. 
  For $1\leq \ell\leq N$ form words $\zarr_\ell = (\zarr_{\ell\ell},\ldots, \zarr_{N\ell})$.
Begin by setting   $a_1=b$. Then for $\ell=1,\ldots , N$  recursively apply the map
\begin{equation}\label{NYalg}
\begin{array}{ccc}
& a_\ell & \\
\zarr_\ell & \cross & \zarr'_\ell \\
& a_{\ell+1}
\end{array}
\end{equation}
from Definition  \ref{NYdef},
where $a_{\ell+1}=a'_\ell$.  The last output $a_{N+1}$ is empty.
The new array  $\zarr'=(\zarr'_{k\ell}: 1\le \ell\le k\le N)$ is formed from the
words $\zarr'_\ell = (\zarr'_{\ell\ell},\ldots, \zarr'_{N\ell})$.
  Along the way the procedure  constructs an auxiliary triangular array $a=(a_{k\ell}: 1\le \ell\le k\le N)$  with diagonals $a_\ell=(a_{\ell\ell},\dotsc, a_{N\ell})$.
\end{definition}

\begin{figure}
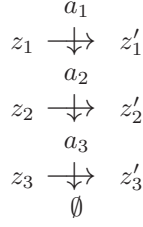

\begin{equation*}
\begin{array}{ccc}
& a_1 & \\
\zarr_1 & \cross & z'_1 \\
& a_2 &\\
\zarr_2 & \cross & z'_2 \\
& a_3 &\\
\zarr_3 & \cross & z'_3 \\ &\emptyset&
\end{array}
\end{equation*}
\caption{Illustration of $\zarr' = \zarr\leftarrow a_1$  when $N=3$. Geometric row insertion of the  word $a_1=(a_{11},a_{21},a_{31})$ into the triangular array $\zarr$ is defined recursively by insertion of $a_i$ into $\zarr_i$ with outputs $z_{i}'$ and $a_{i+1}$.  After step 3 the process
has been exhausted:  $a_3=(a_{33})$ has one entry and $a_4$ is an empty vector. }
\label{NYfiga}
\end{figure}

Definition \ref{NYdef-z} of $\zarr' = \zarr\leftarrow b$ can be summarized by these equations:
\be\begin{aligned}\label{NYalg1}
a_{k,1}&=b_k  &\textrm{for } 1\le k\leq N\\
a_{k+1,\ell+1}&=a_{k+1,\ell} \frac{\zarr_{k+1,\ell} \zarr_{k,\ell}'}{\zarr_{k+1,\ell}' \zarr_{k,\ell}}
&\textrm{for } 1\le \ell\leq k<N\\
\zarr_{k,\ell}'&= a_{k,\ell}(\zarr_{k,\ell}+\zarr_{k-1,\ell}')&\textrm{for } 1\le \ell<k\leq N\\
\zarr_{k,k}'&= a_{k,k}\zarr_{k,k} &\textrm{for } 1\le k\leq N.
\end{aligned}\ee
This procedure is illustrated in Figure \ref{NYfiga} when $N=3$.

Iteration of the insertion procedure defines a temporal evolution $\zarr(n)$, $n=0,1,2,\dotsc$, of
an array $\zarr(n)\in\TN$.  This evolution is driven by a  semi-infinite   matrix
$d=(d_{nj}: n\ge 1, 1\leq j\leq N)$ of positive weights $d_{nj}$.
We write
$d^{[1,n]}=(d_{ij}: 1\leq i\leq n, 1\leq j\leq N)$
for the matrix   of the first $n$ rows of $d$,
and  $d^{[n]}=(d_{n1},\dotsc, d_{nN})$ for the $n^{\text{th}}$ row of $d$.   The
temporal evolution is then defined by successive insertions of rows of $d$ into
the initial array:  given $\zarr(0)\in\TN$ and $d$,  then iteratively for  $n\ge 1$,
\be
{\zarr}(n)  = \Bigl[  \zarr(n-1)  \leftarrow d^{[n]} \Bigr]
= \Bigl[  \zarr(0) \leftarrow d^{[1]}\leftarrow d^{[2]}\leftarrow \cdots \leftarrow d^{[n]} \Bigr].
  \label{z(n)}\ee
Figure \ref{NYfig3} illustrates.

 \begin{figure}
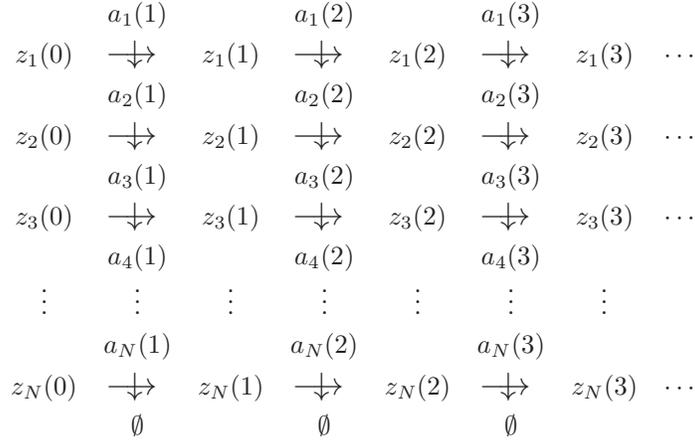

\begin{equation*}
\begin{array}{cccccccc}
           &a_1(1) &           & a_1(2)     &        & a_1(3) & &      \\[2pt]
\zarr_1(0)        &\cross         &\zarr_1(1)     &\cross              & \zarr_1(2) &\cross          & \zarr_1(3)   & \cdots\\[3pt]
           &a_2(1)           &           & a_2(2)             &        & a_2(3)         & &      \\[2pt]
\zarr_2(0)         &\cross         &\zarr_2(1)        &\cross              & \zarr_2(2) &\cross          & \zarr_2(3)   & \cdots\\[3pt]
           &a_3(1)            &           & a_3(2)              &        & a_3(3)         & &      \\[2pt]
\zarr_3(0)       &\cross         &\zarr_3(1)         &\cross              & \zarr_3(2)     &\cross          & \zarr_3(3)   & \cdots\\[3pt]
     &a_4(1)            &           & a_4(2)              &        & a_4(3)         & &      \\
  \vdots         &\vdots           & \vdots          & \vdots               & \vdots       & \vdots           &\vdots & \\[3pt]
               &a_N(1)            &           & a_N(2)              &        & a_N(3)         & &      \\[2pt]
\zarr_N(0)       &\cross         &\zarr_N(1)         &\cross              & \zarr_N(2)     &\cross          & \zarr_N(3)   & \cdots\\[3pt]
               &\es           &           &\es              &        & \es       & &      \\
\end{array}
\end{equation*}
\caption{Evolution of the array $\zarr(n)$ in  state space $\TN$ over time $n=0,1,2,\dotsc$
   The initial state
$\zarr(0)$ is on the left edge.   The inputs
come from the $d$-array: $a_1(n)=d^{[n]}=(d_{n,1},\dotsc, d_{n,N})$. }
 \label{NYfig3}
\end{figure}


\begin{example}\label{example1}
Let us illustrate  the   construction with a step-by-step  example.  (We use rational numbers, though the procedures apply more generally for non-negative reals.) First consider geometric row insertion of $b=(1,5)$ into $\xi = (3,2)$. By the procedure of (\ref{g-row-ins}) we first compute the output $\xi'_1 = b_1 \xi _1 = 1 \cdot 3 = 3$ and then $\xi'_2 = b_2 ( \xi'_1 + \xi_2) = 5(3+2) = 25$. Then we compute $b'_2 =  b_2 \frac{\xi_2 \xi'_1}{\xi_1 \xi'_2} = 5 \frac{ 2 \cdot 3}{3 \cdot 25} = \frac{2}{5}$. This calculation is represented by the diagram
\begin{equation*}
\begin{array}{ccc}
& (1,5) & \\
(3,2) & \cross & (3,25) .\\
& (\tfrac{2}{5}) &
\end{array}
\end{equation*}

We now illustrate geometric insertion of $a_1=(2,2,4)$ into the triangular array $z$ with $z_1=(4,1,3)$, $z_2= (3,7)$ and $z_3=(2)$. That is to say, that $z$ is given by
\begin{equation*}
\begin{array}{ccccccccc}
&& 4 &&\\
\\
& 3 && 1 &\\
\\
2 && 7 && 3\\
\end{array}
\end{equation*}

This insertion is performed by completing the following diagram.
\begin{equation*}
\begin{array}{ccc}
              &(2,2,4)                        &                               \\[2pt]
(4,1,3)       &\cross                         & z'_1                          \\[2pt]
              &  a_2                          &                               \\[2pt]
(3,7)         &  \cross                       & z'_2                          \\[2pt]
              &   a_3                         &                               \\[2pt]
 (2)          &  \cross                       & z'_3                          \\[2pt]
\end{array}
\end{equation*}

After calculating each insertion, starting from the top and sequentially going down, we arrive at  the following diagram.
\begin{equation*}
\begin{array}{ccc}
              &(2,2,4)                        &                               \\[2pt]
(4,1,3)       &\cross                         & (8,18,84)                     \\[2pt]
              &  (\tfrac{2}{9},\tfrac{18}{7}) &                               \\[2pt]
(3,7)         &  \cross                       & (\tfrac{2}{3},\tfrac{138}{7}) \\[2pt]
              &   (\tfrac{14}{69})            &                               \\[2pt]
 (2)          &  \cross                       & (\tfrac{28}{69})              \\[3pt]
\end{array}
\end{equation*}
From the right-hand side of the diagram we read off the new array
\begin{equation*}
z' \; \;=\;  \quad
\begin{array}{ccccccccc}
&& 8 &&\\
\\
& \tfrac{2}{3} && 18 &\\
\\
\tfrac{28}{69} && \tfrac{138}{7} && 84\\
\end{array}
\end{equation*}
thus completing the insertion procedure.
\end{example}

 \begin{figure}
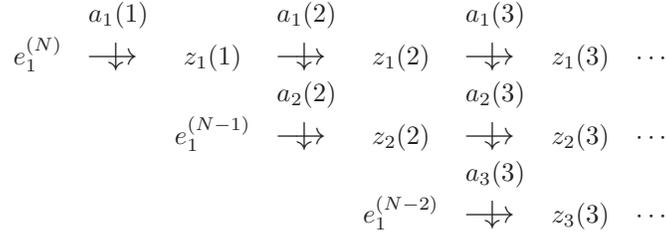

\begin{equation*}
\begin{array}{cccccccc}
           &a_1(1) &           & a_1(2)     &        & a_1(3) & &      \\[2pt]
e^{(N)}_1       &\cross         &\zarr_1(1)     &\cross              & \zarr_1(2) &\cross          & \zarr_1(3)   & \cdots\\[2pt]
           &          &           & a_2(2)             &        & a_2(3)         & &      \\[2pt]
       &         &e^{(N-1)}_1        &\cross              & \zarr_2(2) &\cross          & \zarr_2(3)   & \cdots\\[2pt]
           &             &           &                 &        & a_3(3)         & &      \\[2pt]
        &       &         &               &e^{(N-2)}_1    &\cross          & \zarr_3(3)   & \cdots\\[2pt]
\end{array}
\end{equation*}
\caption{Evolution of the array $\zarr(n)$  started from the empty array
 $\zarr(0)=\es$.   $e^{(k)}_1$ represents the word $(1,0,\dotsc,0)$ of length $k$.  By  Proposition \ref{NYrecprop}  $\zarr(n)$  is equal  to the image $P_{n,N}(d^{[1,n]})$  of the weight matrix
  $d^{[1,n]}=\{a_1(i)\}_{i=1}^{n}$ under the geometric RSK correspondence.}\label{NYfigb}
\end{figure}

Finally we consider the insertion process with an empty initial array. $N$ is still the
fixed size parameter of the array.  Initially $z(0)$ is empty which we denote by
$z(0)=\es$.   The array grows by adding one new diagonal $z_\ell$ at each
time.   At time   $n\in\{1,\dotsc, N\}$,
the already existing diagonals $z_1,\dotsc, z_{n-1}$ are updated by
inserting $a_1(n)$  and   iterating
  as in \eqref{NYalg}, and a new diagonal $z_n$ is filled by inserting
 $a_{n}(n)$  according to \eqref{es-row-ins}.
Consequently,  at time $1\le n<N$,
 the currently defined array with strictly positive entries  is
$\zarr(n)=\{\zarr_{k\ell}(n):\ 1\le k\le N,\, 1\le \ell\le k\wedge n\}$.
We consider the entries  $\{\zarr_{k\ell}(n):\ n< \ell\le k\le N\}$ undefined.
 At time $n=N$ the
array is full, and after time $N$ its evolution continues according to \eqref{z(n)}.
 The evolution
of $\zarr(n)$ from $\zarr(0)=\es$ is illustrated by Figure \ref{NYfigb}.

\begin{rem}   Instead of having truncated  arrays in the evolution
$\{\zarr(n): 0\le n<N\}$ from  $\zarr(0)=\es$, we could also choose to fill the undefined
portion of the array
 with certain conventions that  are consistent with the
update rules. This would include use of `singular values' $0$ and $\infty$.
For example, at time  $0\le n<N$, diagonal $z_{n+1}(n)$ would equal
$(1,0,\dotsc,0)$, in accordance with \eqref{es-row-ins}.   State space
$\TN$ would be replaced with a larger space $\TNstar$ that contains $\es$
and these other partially  singular arrays. In this paper we will not use these conventions.
\label{sing-rem}\end{rem}

\subsection{Geometric RSK via non-intersecting lattice paths}

We turn to  an alternative definition of the evolution in Figure \ref{NYfigb}
in terms of configurations of non-intersecting lattice paths.
As before $N\ge 1$ is fixed and
the input of the process  is the semi-infinite  matrix
$d=(d_{ij}: i\ge 1,  1\leq j\leq N)$ of positive real weights.
For each $n\ge 1$   form  the  $n\times N$ matrix
$d^{[1,n]}=(d_{ij}: 1\le i\le n,  1\leq j\leq N)$.
  For $1\leq \ell \leq k\le N$ let $\Pi_{n,k}^{\ell}$ denote the set of $\ell$-tuples $\pi=(\pi_1,\ldots, \pi_\ell)$ of non-intersecting lattice paths in $\Z^2$ such that, for $1\le r\le \ell$,  $\pi_{r}$ is a lattice path from $(1,r)$ to $(n,k+r-\ell)$. A `lattice path' only takes unit steps in the coordinate directions between nearest-neighbor lattice points of $\Z^2$ (i.e., up or right); non-intersecting means that paths do not touch. The {\sl weight} of an $\ell$-tuple $\pi = (\pi_1,\ldots, \pi_\ell)$ of such paths is
\begin{equation}
wt(\pi) = \prod_{r=1}^{\ell} \prod_{(i,j)\in \pi_{r}} d_{ij}.
\end{equation}
  For   $1\le \ell\le k\le N$   let
\be \tau_{k,\ell}(n) = \sum_{\pi\in\Pi_{n,k}^{\ell}} wt(\pi).   \label{tau} \ee
For $0\leq n<\ell< k\leq N$  the set of paths $\Pi_{n,k}^\ell$ is empty and we
take the empty sum to equal zero.
At $\ell=k$ there is a unique $\ell$-tuple, and in fact we have the equation
\begin{eqnarray*}
\tau_{k,\ell}(n)=\delta_{k,\ell}\,\tau_{k,n}(n)\qquad\text{for}\qquad  0\leq n< \ell\leq k\leq N
\end{eqnarray*}
where  $\delta_{k,\ell}$ is the Kronecker delta.  For $\ell=0$ the right
convention turns out to be $\tau_{k,0}(n)=1$ for $1\le k\leq N$.

The  array $\zarr(n)=\{\zarr_{k,\ell}(n):\ 1\le k\le N,\ 1\le \ell\le k\wedge n\}$  is now defined by
\begin{equation}
\zarr_{k,1}(n)\cdots \zarr_{k,\ell}(n) = \tau_{k,\ell}(n).
\label{ztau-3}\end{equation}
The elements  $\big(\zarr_{k\ell}(n):\ n< \ell\le k\le N\big)$ we regard as undefined, even though
strictly speaking one more element, namely $\zarr_{n+1,n+1}(n)$, could be consistently defined as $1$.   In the spirit of Remark \ref{sing-rem} we could also  replace the
undefined array elements with particular singular values. See Figure \ref{pathfig} for an illustration.

\begin{figure}
\begin{equation*}
\begin{array}{ccccccc}
d_{15} && d_{25} && d_{35} && d_{45}\\
       &&        &&  &&\uparrow\\
d_{14} && d_{24} && d_{34} && d_{44}\\
       &&        &&  &&\uparrow\\
d_{13} && d_{23} &\rightarrow& d_{33} &\rightarrow & d_{43}\\
       &&        \uparrow&&  &&\\
d_{12} && d_{22} && d_{32} && d_{42}\\
       &&        \uparrow&&  &&\\
d_{11} &\rightarrow& d_{21} && d_{31} && d_{41}
\end{array}
\qquad \qquad\qquad
\begin{array}{ccccccc}
d_{15} && d_{25} && d_{35} &\rightarrow& d_{45}\\
       &&        && \uparrow && \\
d_{14} &\rightarrow& d_{24} &\rightarrow& d_{34} && d_{44}\\
  \uparrow     &&        &&  &&\uparrow\\
d_{13} && d_{23} &\rightarrow& d_{33} &\rightarrow & d_{43}\\
 \uparrow      &&        \uparrow&&  &&\\
d_{12} && d_{22} && d_{32} && d_{42}\\
       &&        \uparrow&&  &&\\
d_{11} &\rightarrow& d_{21} && d_{31} && d_{41}
\end{array}
\end{equation*}
\\[6pt]
\begin{equation*}
\begin{array}{ccccccccc}
&&&& \zarr_{11}(4) &&&&\\
\\
&&& \zarr_{22}(4) && \zarr_{21}(4) &&&\\
\\
&&\zarr_{33}(4) && \zarr_{32}(4) && \zarr_{31}(4) &&\\
\\
&\zarr_{44}(4) && \zarr_{43}(4) && \zarr_{42}(4) && \zarr_{41}(4)&\\
\\
\mathbf{\zarr_{55}(4)} && \zarr_{54}(4) && \zarr_{53}(4) && \zarr_{52}(4)&& \zarr_{51}(4)
\end{array}
\end{equation*}
\caption{Illustration of the path construction  for $n=4$ and $N=5$. Note that the matrix $d$ is presented in Cartesian coordinates.  On the top left is an element $\pi$ of $\Pi^1_{n,N}$, that is an up-right path $\pi$   from $(1,1)$ to $(n,N)=(4,5)$. The weight of this path is $d_{11}d_{21}d_{22}d_{23}d_{33}d_{43}d_{44}d_{45}$.
On the top right is an element $\pi=(\pi_1,\pi_2)$ of $\Pi^2_{n,N}$,  a pair of nonintersecting up-right paths: $\pi_1$  from $(1,1)$ to $(n,N-1)=(4,4)$ and $\pi_2$   from $(1,2)$ to $(n,N)=(4,5)$.
The weight of $\pi$ is $(d_{11} d_{21} d_{22} d_{23} d_{33} d_{43} d_{44} )\cdot\big(d_{12} d_{13} d_{14} d_{24} d_{34} d_{35} d_{45} \big)$.
At the bottom is the array
$\zarr(n)=\{\zarr_{k,\ell}(n)\}_{1\leq \ell\leq k\leq N}$, at time $n=4$.
We write this as $\zarr(n)=P_{n,N}(d)$.
The entry $\zarr_{N,1}(n)=\zarr_{5,1}(4)$ is equal to the sum of the weights of all paths in $\Pi^1_{n,N}$.
The product $\zarr_{N,1}(n)\zarr_{N,2}(n)=\zarr_{5,1}(4)\zarr_{5,2}(4)$ is equal to the sum of the weights of all elements in $\Pi^2_{n,N}$.   The rest of the   array is determined   similarly via \eqref{ztau-3}.
We regard the boldface element $\mathbf{\zarr_{55}(4)}$ as not yet defined at time
$n=4$ as explained after \eqref{ztau-3}. }
\label{pathfig}\end{figure}

We express the mapping  \eqref{ztau-3}  that defines $\zarr(n)$ from $d^{[1,n]}$ as
\be \zarr(n)=P_{n,N}(d^{[1,n]}).\label{PnN}\ee
We come to the  important point from
  Section 2.2 of \cite{NY}  that   row insertion into an empty array and
this   path construction define the same array $\zarr(n)$.  We postpone the proof
of this proposition to the end of the section.

\begin{proposition}\label{NYrecprop}  Let $n,N\ge 1$. Set
  $\zarr(n) = P_{n,N}(d^{[1,n]})$ and
\begin{equation}
\tilde{\zarr}(n) = \es \leftarrow d^{[1]}\leftarrow d^{[2]}\leftarrow \cdots \leftarrow d^{[n]}.
\label{ztilde}\end{equation}
Then $\zarr(n)=\tilde{\zarr}(n)$.
\end{proposition}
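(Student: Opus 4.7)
I would prove that both definitions agree by induction on $n$. The main task is to check that the path-sum prescription \eqref{ztau-3} satisfies the same recursion as the row-insertion scheme \eqref{NYalg1}.

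\emph{Base case} ($n=1$): only the first diagonal is defined. The path set $\Pi_{1,k}^1$ contains the unique straight-up path from $(1,1)$ to $(1,k)$, so $\zarr_{k,1}(1) = \tau_{k,1}(1) = \prod_{j=1}^k d_{1,j}$, and this matches exactly the output of \eqref{es-row-ins} applied with $b = d^{[1]}$.

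\emph{Inductive step.} Assume $\zarr(n-1) = \tilde{\zarr}(n-1)$. I would derive a recursion for $\tau_{k,\ell}(n)$ in terms of $\tau$-values at time $n-1$ by cutting each $\ell$-tuple of non-intersecting paths $\pi = (\pi_1, \ldots, \pi_\ell) \in \Pi^\ell_{n,k}$ along the column boundary between $n-1$ and $n$. If $\sigma_r$ denotes the height at which $\pi_r$ crosses into column $n$, non-intersection forces $\sigma_1 < \sigma_2 < \cdots < \sigma_\ell$ with $r \le \sigma_r \le k+r-\ell$, and one obtains
\begin{equation*}
\tau_{k,\ell}(n) \;=\; \sum_{\sigma_1 < \cdots < \sigma_\ell} M_{\sigma_1,\ldots,\sigma_\ell}(n-1)\,\prod_{r=1}^\ell \prod_{j=\sigma_r}^{k+r-\ell} d_{n,j},
\end{equation*}
where $M_{\sigma_1,\ldots,\sigma_\ell}(n-1)$ is the sum of weights of non-intersecting $\ell$-tuples of paths at time $n-1$ ending at heights $\sigma_1,\ldots,\sigma_\ell$. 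By the Lindstr\"om--Gessel--Viennot lemma (whose sign-reversing-involution proof works verbatim in the standard $(+,\times)$ algebra once one has non-crossing endpoints), these $M$-values are minors of the matrix of single-path sums, and hence are rational expressions in the inductively known quantities $\zarr_{k',\ell'}(n-1)$.

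\emph{Main obstacle.} The delicate part is matching this expansion term-by-term with what the iterated basic row-insertion scheme \eqref{NYalg1} actually produces, as the auxiliary variables $a_{k,\ell}$ snake through the array. I would do this by an inner induction on the diagonal index $\ell$: assuming diagonals $\zarr'_1,\ldots,\zarr'_{\ell-1}$ have already been shown to match the path prediction, identify $a_{k,\ell}$ with a specific ratio of minors of single-path sums so that $\zarr'_{k,\ell} = a_{k,\ell}(\zarr_{k,\ell} + \zarr'_{k-1,\ell})$ and $\zarr'_{\ell,\ell} = a_{\ell,\ell}\zarr_{\ell,\ell}$ produce the correct new $\ell$th diagonal predicted by the path decomposition. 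The consistency of this identification across successive $(k,\ell)$ -- in particular the update rule $a_{k+1,\ell+1} = a_{k+1,\ell}\, \zarr_{k+1,\ell} \zarr'_{k,\ell}/(\zarr'_{k+1,\ell} \zarr_{k,\ell})$ -- is a direct manifestation of the Desnanot--Jacobi (Dodgson condensation) identity applied to an appropriate $2\times 2$ submatrix of single-path sums. This determinant identity is precisely what makes the multiplicative (geometric) version of RSK tick, and once it is in place the two sides agree.
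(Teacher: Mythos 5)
The paper does not argue by induction on $n$ at all. It invokes the Noumi--Yamada $H$-matrix formalism: the path sums appear as minors $\tau_{k\ell}(n)=\det H^{[1,\ell]}_{[k-\ell+1,k]}$ of the product $H=H(d^{[1]})\cdots H(d^{[n]})$ (this is the LGV content, cited as [NY, Prop.~1.3]), and the row-insertion output appears via the unique factorization $H=H_M(\eta_M)\cdots H_1(\eta_1)$ together with the closed-form solution $\eta_{k\ell}$ as $\tau$-ratios ([NY, Thm.~2.4] and eqn.~(2.38)--(2.40) of that paper). The proposition then follows immediately from \eqref{H4}--\eqref{H8} by matching the two descriptions of the same ratio variables. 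So your plan -- induction on $n$, cut the non-intersecting $\ell$-tuples at the last column, match against \eqref{NYalg1} -- is a genuinely different route. Your base case is correct, and your top-level recursion for $\tau_{k,\ell}(n)$ obtained by recording the heights $\sigma_1<\cdots<\sigma_\ell$ at which the $\ell$-tuple crosses into column $n$ is also correct.

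But there is a real gap where you label the ``main obstacle,'' and it is not resolved by the proposal. First, the quantities $M_{\sigma_1,\ldots,\sigma_\ell}(n-1)$ are LGV determinants with \emph{arbitrary} exit heights, while the inductively known $\tau_{k',\ell'}(n-1)$ (equivalently the $\zarr(n-1)$-entries) are only the \emph{contiguous-block} minors; ``hence are rational expressions in the inductively known quantities'' is not a consequence of LGV alone -- one would need Pl\"ucker/Desnanot--Jacobi relations already at this step, and this is not sketched. Second, and more seriously, the crux of the argument is the claim that one can ``identify $a_{k,\ell}$ with a specific ratio of minors'' so that all four update rules in \eqref{NYalg1} are satisfied and the outputs match the path prediction, with consistency ``a direct manifestation of Desnanot--Jacobi.'' No candidate formula for $a_{k,\ell}$ is written down and not a single one of the required identities is verified. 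That identification and its verification is precisely the content of \cite[Thm.~2.4]{NY} in different variables, which is a substantial theorem, not a remark. As it stands the proposal is a plausible strategy rather than a proof; to complete it you would essentially have to reprove the Noumi--Yamada factorization theorem, whereas the paper takes the shorter route of citing it.
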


Let us discuss similarities with the classical RSK correspondence.
 Array  $z=P_{n,N}(d^{[1,n]})$ is the analogue of the $P$-tableau in the usual RSK correspondence.  The analogue of the shape of this tableau is the
bottom vector $(\zarr_{N,1},\ldots,\zarr_{N,N\wedge n})$.  The analogue of the $Q$-tableau is the array $w=Q_{n,N}(d^{[1,n]})=P_{N,n}((d^{[1,n]})^T)$, where superscript $T$ denotes transpose.  It is not difficult to see that the pair $(z,w)$ have the same `shape', that is, $(z_{N,1},\ldots,z_{N,N\wedge n})= (w_{n,1},\ldots,w_{n,N\wedge n})$.  Given this constraint, the pair $(z,w)$ can be identified with an $n\times N$ matrix $\tilde d$ defined by
\[  \tilde d_{ij} = \begin{cases} \zarr_{i+j-1,i},  &1\le i\le n\wedge N, \, 1\le j\le N-i+1\\
 w_{N+n-i-j+1, N-j+1}, &1\le j\le N,\, N-j+1\le i\le n.    \end{cases}   \]
 With this identification, the mapping $d^{[1,n]}\mapsto (z,w)$ is a bijection from the set of $n\times N$ matrices with strictly positive real entries onto itself, which one can refer to as the geometric RSK correspondence.

Proposition \ref{NYrecprop} shows that, as with the usual RSK correspondence,
the `$P$-tableau' $z(n)=P_{n,N}(d^{[1,n]})$ can be defined recursively by inserting the rows of the matrix $d$ one after another.  Then   $w=Q_{n,N}(d)$ plays the role of a `recording' tableau.  With $z(k)=P_{k,N}(d^{[1,k]})$ for $1\le k\le n$,  $w=Q_{n,N}(d)$ is given by $w_{k,\cdot}=z_{N,\cdot}(k)$ for $1\le k\le N$: this is immediate from the definition.



Let us introduce some  further conventions for the sequel. As before
$\zarr_{\ell} = \big(\zarr_{\ell,\ell},\dotsc,\zarr_{N,\ell}\big)$ is  the $\ell^{\text{th}}$ diagonal of $\zarr$, counting from right to left.
The $k^{\text{th}}$ row of the array is denoted by $\zarr^{[k]}=(z_{k1},\dotsc,z_{kk})$,
and an array restricted to a range of rows is denoted  by
 $\zarr^{[a,b]}=(\zarr_{k,\ell}: a\le k\le b,\, 1\le\ell\le   k)$.
To discuss evolution of the array row by row it is convenient to have notation
for   spaces of   rows.  For $1\le k\le N$
  the  $k^{\text{th}}$ row of an array in $\TN$  lies in the space $\SN{k}=(y_{\ell}: 1\le \ell\le k,\textrm{and } y_\ell\in (0,\infty))$, i.e., the space of vectors of length $k$ with positive real coordinates.

\medskip

As the last item of this section we sketch the proof of Proposition \ref{NYrecprop} from \cite{NY}.
\begin{proof}[Proof of Proposition \ref{NYrecprop}]
The connection between $\zarr(n)$ and $\tilde\zarr(n)$ goes via  the variables
$\tau_{k\ell}(n)$ and  a matrix formalism developed in \cite{NY}.

For an $N$-vector $x=(x_1,\dotsc,x_N)$ define an upper triangular  $N\times N$ matrix
\[ H(x)=\sum_{1\le i\le j\le N} x_ix_{i+1}\dotsm x_j E_{i,j}\] where $E_{i,j}$ is
the $N\times N$ matrix with a unique $1$ in the $(i,j)$-position and zeroes elsewhere.
For a fixed $n$, define the product
\be H=H(d^{[1]})H(d^{[2]})\dotsm H(d^{[n]}).  \label{Hprod}\ee
A key fact \cite[Prop.~1.3]{NY} is that the $\tau_{k\ell}(n)$'s give certain minor determinants
of $H$:   $\tau_{k\ell}(n)=\det H^{[1,\ell]}_{[k-\ell+1,k]}$ where the   superscript
specifies the range of rows and the subscript the range of columns in the minor.

On the other hand, the row insertion procedure can be encoded with $H$-type
matrices.   Let $1\le m\le N$ and introduce this further definition for an
$(N-m+1)$-vector $x=(x_m,x_{m+1},\dotsc, x_N)$:
\be H_m(x)= \sum_{1\le i<m} E_{i,i} + \sum_{m\le i\le j\le N} x_ix_{i+1}\dotsm x_j E_{i,j}.
\label{Hm}\ee
In particular, $H_1(x)=H(x)$.

  Set $M=n\wedge N$.   With $H$ as in \eqref{Hprod} above,   consider the equation
\be H =  H_M(\zratio_{M})\dotsm H_2(\zratio_{2})H_1(\zratio_1)  \label{H2}\ee
for unknown vectors $\zratio_{\ell}=(\zratio_{k,\ell})_{\ell\le k\le N}$, $\ell=1,2,\dotsc,M$.
This equation is uniquely solved by \cite[Thm.~2.4]{NY}
\be  \zratio_{\ell,\ell}=\frac{\tau_{\ell,\ell}(n)}{\tau_{\ell,\ell-1}(n)}\,, \quad
\zratio_{k,\ell}=\frac{\tau_{k,\ell}(n) \,\tau_{k-1,\ell-1}(n)}{\tau_{k,\ell-1}(n) \, \tau_{k-1,\ell}(n)}
 \quad\text{ for $\ell<k\le N$.}  \label{H4}\ee
 Equation \eqref{H2} encodes the row insertion procedure but in different variables
 \cite[eqn.~(2.38)--(2.40)]{NY}.  Namely,
 the $\zratio$-variables are the ratios of the $\tilde\zarr$-variables defined by \eqref{ztilde}:
 \be  \text{for $1\le\ell\le M$:} \quad  \zratio_{\ell,\ell}=\tilde\zarr_{\ell,\ell}(n)\, ,  \quad
 \zratio_{k,\ell}=\frac{\tilde\zarr_{k,\ell}(n)}{\tilde\zarr_{k-1,\ell}(n)}
 \quad \text{ for $\ell<k\le N$.}  \label{H6}\ee

Combining \eqref{H4}--\eqref{H6} for $\tilde\zarr$ with \eqref{ztau-3} for $\zarr$ gives
\be  \tilde\zarr_{k,\ell}(n) = \zratio_{\ell,\ell}\dotsm \zratio_{k,\ell}
= \frac{\tau_{k,\ell}(n)}{\tau_{k,\ell-1}(n)}  = \zarr_{k,\ell}(n)
\quad\text{for  $1\le\ell\le M$, $\ell\le k\le N$.}  \qedhere
 \label{H8}\ee
\end{proof}

\section{Geometric RSK with random input}\label{raninputsec}

Given an initial (possibly random) state $\zarr(0)\in \TN$ and a weight matrix $d$ composed of independent random rows $d^{[i]}$, with $\zarr(0)$ and $d$ independent,
 Proposition \ref{NYrecprop} shows that $\zarr(n) = P_{n,N}(d^{[1,n]})$ has the structure of a Markov process with time parameter $n$. The exact form of the transition kernel depends on the distribution of the rows $d^{[i]}$ and can be explicitly written down by appealing to the recursion of Definition \ref{NYdef}. We do this  for a particular  solvable distribution on the elements $d_{ij}$ that we now introduce.

\begin{definition} Let $\theta$ be a positive real.
A random variable $X$ has {\it inverse-gamma distribution with parameter $\theta>0$}  if it is supported on the positive reals
where it has distribution
\begin{equation}\label{invgammadensity}
\PP(X\in dx) = \frac{1}{\Gamma(\theta)} x^{-\theta-1}\exp\left\{-\frac{1}{x}\right\} dx.
\end{equation}
We abbreviate this   $X\sim \Gamma^{-1}(\theta)$.
\end{definition}

\begin{definition}
An {\it inverse-gamma weight matrix}, with respect to a {\it parameter matrix} $\thetarc=(\thetarc_{i,j}>0:  i\ge 1, 1\le j\le N)$, is a matrix of positive weights $(d_{i,j}:  i\ge 1, 1\le j\le N)$ such that the entries are  independent random variables and  $d_{i,j}\sim \Gamma^{-1}(\thetarc_{i,j})$. We call a parameter matrix $\thetarc$ {\it solvable} if $\thetarc_{i,j} =  \thetar_i+ \thetac_j>0$ for real parameters $(\thetar_i: i\geq 1)$ and $(\thetac_j: 1\le j\le N)$.   In this case we also  refer to the associated weight matrix as solvable. Column $n$ of the parameter matrix $\thetarc$ is denoted by $\thetarc^{[n]}=(\thetarc_{n,j})_{1\le j\le N}$. We denote the vector $\thetacv=(\thetac_j: 1\le j\le N)$ for later use.
\label{def-d}\end{definition}

The intertwining properties we discuss in Section \ref{intertwining} are the reason we
restrict to inverse-gamma distributed weights and the reason for the particular form
  $\thetarc_{i,j} =  \thetar_i+ \thetac_j $.

The transition kernel for the Markov chain $\zarr(n)$ on the state space $\TN$
 resulting from applying the geometric RSK correspondence to a solvable inverse-gamma weight matrix is denoted by $\Piker{N}{\thetarc^{[n]}}(\zarr,d\tilde \zarr)$.
 This represents the time $n$ transition  $\zarr(n-1) \to  \zarr(n)$.

To explicitly state this kernel, it turns out useful to exploit another structural property of the image of the geometric RSK with   independent weights $d_{n,k}$:
 the rows of the   array $\zarr(n)$ form a Markov chain (indexed from top to bottom) with respect to adding columns to the weight matrix $d$. For this purpose let us denote $z=z(n-1)$ and $\tilde{z}=z(n)$. We begin at the top of the array. The singleton top row (denoted by $y$) of $\zarr$ is updated at time $n$ by the transition kernel
\begin{equation}
\Pker{1}{\thetarc^{[n]}}(y,d\tilde y) = \Gamma(\thetarc_{n,1})^{-1} \biggl(\frac{y_1}{\tilde y_1}\biggr)^{\thetarc_{n,1}} \exp\biggl\{-\frac{y_1}{\tilde y_1}\biggr\} \frac{d\tilde y_1}{\tilde y_1}.
\end{equation}
This simply encodes $\tilde y=d_{n,1}y$ with $d_{n,1}\sim \Gamma^{-1}(\thetarc_{n,1})$.

Now we move down along the rows of the array (recall $\zarr^{[k-1]}=(\zarr_{k-1,\ell})_{1\le\ell\le k-1}$ and likewise for $\tilde\zarr^{[k-1]}$).
Given both the initial and updated row $k-1$, $(\zarr^{[k-1]}, \tilde \zarr^{[k-1]})$, from \eqref{NYalg1} we read off the time $n$ rule for updating row $k$ from
$\zarr^{[k]}=(\zarr_{k,\ell})_{1\le\ell\le k}$ to $\tilde \zarr^{[k]}=(\tilde \zarr_{k,\ell})_{1\le\ell\le k}$. The new input weight is $a_{k,1}=d_{n,k} \; \sim \,\Gamma^{-1}(\thetarc_{n,k})$,
 and the equations are
\be\begin{aligned}
\tilde\zarr_{k,1}&=a_{k,1}(\zarr_{k,1}+\tilde \zarr_{k-1,1}) \\[6pt]
\tilde\zarr_{k,\ell} &= \ddd\frac{\zarr_{k, \ell-1}\tilde \zarr_{k-1,\ell-1}}{\zarr_{k-1,\ell-1}}\cdot
   \frac{\zarr_{k, \ell}+\tilde \zarr_{k-1,\ell}}{\zarr_{k, \ell-1}+\tilde \zarr_{k-1,\ell-1}} \qquad\text{for } 2\le\ell\le k-1,\\[9pt]
\tilde\zarr_{k,k} &= \ddd\frac{\zarr_{k,k}\zarr_{k,k-1}\tilde \zarr_{k-1,k-1}}
   {(\zarr_{k, k-1}+\tilde \zarr_{k-1,k-1})\zarr_{k-1,k-1}} .
\end{aligned}\label{row-k-eq}\ee

Equations \eqref{row-k-eq} above  show that $\tilde \zarr^{[k]}$ is an explicit function of
$\zarr^{[k-1]},\zarr^{[k]},\tilde \zarr^{[k-1]}$ and the (random) weight $a_{k,1}$. Taking the push-forward of the measure on $a_{k,1}$ under this function yields a probability distribution on $\tilde \zarr^{[k]}$, as a function  of
$(\zarr^{[k-1]},\zarr^{[k]},\tilde \zarr^{[k-1]})$.
We denote this probability distribution by the stochastic kernel $\Lker{k}{\thetarc_{n,k}}\big((\zarr^{[k-1]},\zarr^{[k]};\tilde \zarr^{[k-1]}), d\tilde \zarr^{[k]}\big)$ from
 $\SN{k-1}\times \SN{k}\times \SN{k-1}$ into $\SN{k}$. Explicitly it is given as follows in terms
of its integral against a bounded Borel test function $g$ on $\SN{k}$:
\be\begin{aligned}
&\int_{\Rplus^k} g(\tilde y)\,\Lker{k}{\thetarc_{n,k}}\big((x,y;\tilde x), d\tilde y\big)
=  \int_{\Rplus} \frac{d\tilde y_1}{\tilde y_1}\, {\Gamma(\thetarc_{n,k})^{-1}}  \left(\frac{y_1+\tilde x_1}{\tilde y_1}\right)^{\thetarc_{n,k}}\exp\left\{-\frac{y_1+\tilde x_1}{\tilde y_1}\right\} \, \\[6pt]
&\qquad\qquad \times g\biggl( \tilde y_1\,,\,
\biggl\{ \frac{y_{ \ell-1}\tilde x_{\ell-1}}{x_{\ell-1}}\cdot
   \frac{y_{ \ell}+\tilde x_{\ell}}{y_{ \ell-1}+\tilde x_{\ell-1}} \biggr\}_{2\le\ell\le k-1} \,,\,
 \frac{y_{k}y_{k-1}\tilde x_{k-1}}{x_{k-1}(y_{k-1}+\tilde x_{k-1})}     \biggr) .
\end{aligned}\label{Lker}\ee

 Now we can write down the kernel for the evolution of the array. The kernel $\Piker{N}{\thetarc^{[n]}}(\zarr,d\tilde \zarr)$ for the transition from
 $\zarr=\zarr(n-1)$ to $\tilde\zarr=z(n)$  on the space $\TN$ is defined inductively on $N$.
For $N=1$ set
$\Piker{1}{\thetarc^{[n]}}= \Pker{1}{\thetarc^{[n]}}$, and for $N\ge 2$
\be
\Piker{N}{\thetarc^{[n]}}\big(\zarr^{[1,N]},d\tilde \zarr^{[1,N]}\big)
= \Piker{N-1}{\thetarc^{[n]}}\big(\zarr^{[1,N-1]},d\tilde \zarr^{[1,N-1]}\big)\,
\Lker{N}{\thetarc_{n,N}}\big((\zarr^{[N-1]},\zarr^{[N]}; \tilde \zarr^{[N-1]}), d\tilde \zarr^{[N]}\big).
\label{Piker}\ee

\subsection{Intertwining relation}\label{intertwining}
Let us first recall a well-known criterion for a function of a Markov chain to retain
the Markov property (see, for example, \cite{PR}).
Consider a measurable transformation $\phi:T\to S$ where $(T,\mathcal{T})$ and $(S,\mathcal{S})$ are   measurable  spaces. Given  Markov transition kernels $\Pi_n$ on $T$ one forms a Markov process $\zarr(n)$ which has a given initial distribution $\zarr(0)$ and transitions between $\zarr(n-1)$ to $\zarr(n)$ via $\Pi_n$. The process $\{\zarr(n)\}_{n\geq 0}$ is Markovian with respect to its own filtration. The question is, under what conditions is $y(n) = \phi(\zarr(n))$ Markovian {\it under its own filtration} $\sigma\{y(0),\ldots,y(n)\}$, and what is the associated transition kernel $\bar{P}_n:S\to S\,$?

In order to answer this question we  introduce an  {\it intertwining} kernel    $\bar{K}:S\to T$.  (The reason $\bar P$ and $\bar K$ have bars is that one often initially deals with unnormalized kernels and then normalizes them to be probability measures in their second variable.)
\begin{proposition}\label{MarkovfunctionsProp}
Assume that there exist $\bar{P}_n$ and $\bar{K}$ (which does not depend on $n$) satisfying
\begin{enumerate}
\item[(1)] for all $y\in S$, $\bar{K}(y,\phi^{-1}(y)) = 1$,
\item[(2)] for all $n$, $\bar{K} \Pi_n = \bar{P}_n \bar K$.
\end{enumerate}
Then, for any initial (possibly random) state $y^0\in S$, if one initializes the Markov chain $z(n)$ with $z(0)$ distributed according to the measure $\bar{K}(y^0,\cdot)$, one has these properties:
\begin{enumerate}
\item[(i)] For all $y\in S$ and all bounded Borel functions $f$ on $T$, $$\EE\big[f(z(n))|y(0),\ldots, y(n-1),y(n)=y\big] = (\bar Kf)(y).$$
\item[(ii)] The process $y(n)=\phi(z(n))$ is Markov in its own filtration $\sigma\{y(0),\ldots,y(n)\}$ with
 transition kernel $\bar{P}_n$.
\end{enumerate}
\end{proposition}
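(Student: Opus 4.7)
The plan is to prove both conclusions (i) and (ii) simultaneously by establishing, via induction on $n$, an explicit formula for the joint law of $\bigl(y(0),\dotsc,y(n),z(n)\bigr)$. Specifically, letting $\nu$ denote the distribution of the initial state $y^0$, I claim this joint law factors as
\[
\nu(dy_0)\,\bar{P}_1(y_0,dy_1)\,\bar{P}_2(y_1,dy_2)\dotsm\bar{P}_n(y_{n-1},dy_n)\,\bar{K}(y_n,dz_n).
\]
Once this factorization is in hand, (ii) follows by marginalizing out $z(n)$ to read off the Markov structure of $y$ with transitions $\bar{P}_n$, while (i) follows by reading off the conditional law of $z(n)$ given $\sigma\{y(0),\dotsc,y(n)\}$ as $\bar{K}(y(n),\cdot)$.

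The base case $n=0$ is immediate: by construction $z(0)\sim \bar{K}(y^0,\cdot)$, and hypothesis (1) says $\bar{K}(y^0,\cdot)$ is concentrated on the fibre $\phi^{-1}(y^0)$, so $y(0)=\phi(z(0))=y^0$ almost surely and the joint law of $(y(0),z(0))$ is $\nu(dy_0)\bar{K}(y_0,dz_0)$.

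For the inductive step I assume the claim at stage $n-1$ and append $z(n)$ via the Markov property of $z$, contributing the kernel $\Pi_n(z(n-1),dz_n)$. Then the joint law of $(y(0),\dotsc,y(n-1),z(n-1),z(n))$ contains the factor $\bar{K}(y_{n-1},dz_{n-1})\,\Pi_n(z_{n-1},dz_n)$. Integrating out $z(n-1)$ collapses this factor into the composite kernel $(\bar{K}\Pi_n)(y_{n-1},dz_n)$, and the intertwining hypothesis (2) rewrites this as
\[
(\bar{P}_n\bar{K})(y_{n-1},dz_n)=\int_S \bar{P}_n(y_{n-1},dy_n)\,\bar{K}(y_n,dz_n).
\]
To append $y(n)=\phi(z(n))$, I invoke hypothesis (1) once more: conditionally on the newly introduced $y_n$, the variable $z(n)$ has law $\bar{K}(y_n,\cdot)$, which is supported on $\phi^{-1}(y_n)$, so $\phi(z(n))=y_n$ almost surely and no new randomness is produced. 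This yields the factorization at stage $n$, closing the induction.

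The only subtlety, which I would regard as the main step, is the merger of $\bar{K}(y_{n-1},dz_{n-1})\,\Pi_n(z_{n-1},dz_n)$ into a single kernel on $T$ and its subsequent decomposition along the $\phi$-fibres. This is precisely where the two hypotheses play complementary roles: (2) reorganizes the composite kernel so that the $y$-transition $\bar{P}_n$ is extracted first, and (1) then guarantees that the residual kernel on $T$ is supported on the correct fibre so that the derived variable $\phi(z(n))$ automatically equals the recorded $y_n$ rather than being a new random quantity. Everything else is routine bookkeeping with conditional expectations and Fubini; no measure-theoretic regularity beyond the standard kernel framework of the statement is required.
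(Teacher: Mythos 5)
The paper does not actually prove Proposition~\ref{MarkovfunctionsProp}; it cites Rogers--Pitman \cite{PR} and simply invokes the statement. Your proof is correct and coincides with the standard discrete-time argument for the Rogers--Pitman criterion: establish by induction that the joint law of $\bigl(y(0),\dots,y(n),z(n)\bigr)$ factors as $\nu(dy_0)\,\bar P_1(y_0,dy_1)\cdots\bar P_n(y_{n-1},dy_n)\,\bar K(y_n,dz_n)$, using (2) to collapse $\bar K\,\Pi_n$ and Fubini to pull out $\bar P_n$, and using (1) to identify the disintegration variable $y_n$ with $\phi(z(n))$. Two small points you elide but which are harmless: first, appending $\Pi_n(z(n-1),dz_n)$ requires the Markov property of $z$ relative to $\sigma(y(0),\dots,y(n-1),z(n-1))$, which holds because this $\sigma$-algebra sits between $\sigma(z(n-1))$ and $\sigma(z(0),\dots,z(n-1))$ (the $y$'s being deterministic functions of the $z$'s); second, marginalizing out $z_n$ in step (ii) uses $\bar K(y,T)=1$, which is forced by hypothesis (1). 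Neither is a gap, just bookkeeping you correctly flag as routine.
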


Return  to geometric RSK with the solvable inverse-gamma weight matrix. Take $T$ above to be $\TN$ and $S$ to be $\SN{N}$. Let  $\phi:\TN\to \SN{N}$   project  $\zarr\in \TN$ onto its bottom row. It is easier to first introduce unnormalized kernels and prove  intertwining, and then to normalize them to apply the above results.

Define a time $n$ positive kernel on $\SN{N}$ by
\begin{equation}\label{PNdef}
\Pker{N}{\thetarc^{[n]}}(y,d\tilde y) = \prod_{i=1}^{N-1} \exp\left\{-\frac{\tilde y_{i+1}}{y_i}\right\} \prod_{j=1}^{N} \biggl( \Gamma(\thetarc_{n,j})^{-1} \biggl(\frac{y_j}{\tilde y_j}\biggr)^{\thetarc_{n,j}} \exp\biggl\{-\frac{y_j}{\tilde y_j}\biggr\} \frac{d\tilde y_j}{\tilde y_j}\,\biggr).
\end{equation}
Define a positive intertwining kernel from $\SN{N}$ to $\TN$ by
\begin{equation}
\Kker{N}{\thetacv}(y,dz)=  \prod_{1\le \ell\le k<N}
\biggl(\frac{\zarr_{k,\ell}}{\zarr_{k+1, \ell}} \biggr)^{\thetac_{k+1}-\thetac_\ell}
\exp\left(
-\,\frac{\zarr_{k,\ell}}{\zarr_{k+1, \ell}}-\frac{\zarr_{k+1,\ell+1}}{\zarr_{k,\ell}} \right) \frac{d\zarr_{k,\ell}}{\zarr_{k,\ell}} \, \prod_{\ell=1}^N \delta_{y_\ell}(d\zarr_{N\ell})
\label{Kker}\end{equation}
where $\delta_{y}(d\zarr_{ij})$ is the Dirac delta measure at $y$. $\Kker{1}{\thetac_{1}}$ is the identity kernel.
Observe that $\Kker{N}{\thetacv}$ only depends on the column parameters
  $\thetacv$ which do not change with the time index $n$.
Definition \ref{def-d}  stipulated the form  $\thetarc_{i,j}=\thetar_i+\thetac_j$ for  the solvable parameter matrix in order
  to make the intertwining work.  The intertwining itself does also work with a time-dependent
$K$-kernel. But in our case computations reveal that   application of  Proposition \ref{MarkovfunctionsProp} requires a time-independent $K$  and   we are not permitted any more general $(\thetarc_{i,j})$.

$\Pker{1}{\thetarc^{[n]}}$  is a stochastic kernel (i.e., normalized to have measure one)  and represents the update
$\tilde \zarr_{1,1}=a_{1,1}\zarr_{1,1}$. But for $N\ge 2$, $\Pker{N}{\thetarc^{[n]}}$ is substochastic. This is evident because
 the second product is the stochastic kernel  of independent inverse-gamma distributed multiplicative jumps, while the first product is   a killing potential.  A Doob $h$-transform
 (or ground-state transform)  will suffice to renormalize this kernel as well as the intertwining kernel.

The main algebraic content of the integrability or solvability of geometric RSK is:
\begin{proposition}\label{itpr1}
The following intertwining relation holds at all times $n\ge 1$:
\begin{equation}\label{itPKKPI}
\Pker{N}{\thetarc^{[n]}}\Kker{N}{\thetacv} = \Kker{N}{\thetacv}  \Piker{N}{\thetarc^{[n]}},
\end{equation}
where both sides are  operators from $\SN{N}$ to $\TN$.
\end{proposition}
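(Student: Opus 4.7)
The plan is to prove the intertwining by induction on $N$, exploiting the recursive structures of both $\Piker{N}{\thetarc^{[n]}}$ and $\Kker{N}{\thetacv}$. The base case $N=1$ is immediate: $\Kker{1}{\thetac_1}$ is the identity kernel and $\Pker{1}{\thetarc^{[n]}}=\Piker{1}{\thetarc^{[n]}}$, so both sides coincide.

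For the inductive step, the strategy is to combine the factorization \eqref{Piker} of $\Piker{N}{\thetarc^{[n]}}$ with the parallel factorization of $\Kker{N}{\thetacv}$ obtained by separating the factors of \eqref{Kker} with $k=N-1$ (edges linking rows $N-1$ and $N$) from those with $k\le N-2$ (edges lying strictly inside rows $1,\dotsc,N-1$):
\begin{equation*}
\Kker{N}{\thetacv}(y,dz) = B_N(y,dz^{[N-1]})\,\Kker{N-1}{\thetacv}(z^{[N-1]},dz^{[1,N-1]}),
\end{equation*}
where the border kernel $B_N\colon \SN{N}\to\SN{N-1}$ is
\begin{equation*}
B_N(y,d\tilde y) = \prod_{\ell=1}^{N-1}\left(\frac{\tilde y_\ell}{y_\ell}\right)^{\thetac_N-\thetac_\ell}\exp\!\left(-\frac{\tilde y_\ell}{y_\ell}-\frac{y_{\ell+1}}{\tilde y_\ell}\right)\frac{d\tilde y_\ell}{\tilde y_\ell}.
\end{equation*}
Plugging both factorizations into each side of \eqref{itPKKPI} and invoking the induction hypothesis on the inner composition $\Kker{N-1}{\thetacv}\Piker{N-1}{\thetarc^{[n]}}$, both sides become $\Kker{N-1}{\thetacv}(z^{[N-1]},dz^{[1,N-1]})$ multiplied by a two-row expression involving only $y$, $z^{[N-1]}$, and $z^{[N]}$. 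Cancelling this common factor reduces the $N$-intertwining to the \emph{border identity}
\begin{equation*}
\Pker{N}{\thetarc^{[n]}}(y,dz^{[N]})\,B_N(z^{[N]},dz^{[N-1]}) = \int_{\SN{N-1}} B_N(y,d\tilde y)\,\Pker{N-1}{\thetarc^{[n]}}(\tilde y,dz^{[N-1]})\,\Lker{N}{\thetarc_{n,N}}\!\big((\tilde y,y,z^{[N-1]}),dz^{[N]}\big).
\end{equation*}

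The hard part will be verifying this border identity, which carries the true algebraic content of the intertwining. Substituting the explicit densities \eqref{PNdef} for $\Pker{N}{\thetarc^{[n]}}$ and $\Pker{N-1}{\thetarc^{[n]}}$, \eqref{Lker} for $\Lker{N}{\thetarc_{n,N}}$, and the formula for $B_N$ above, the identity becomes an equality of densities on $(z^{[N-1]},z^{[N]})$. Crucially, $\Lker{N}{\thetarc_{n,N}}$ contains $N-1$ Dirac factors forcing $\tilde z_{N,2},\dotsc,\tilde z_{N,N}$ to be explicit deterministic functions of $(\tilde y,y,z^{[N-1]})$ via \eqref{row-k-eq}, leaving a single random degree of freedom $\tilde z_{N,1}$ inherited from the inverse-gamma weight $a_{N,1}$. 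These Dirac constraints convert the $\tilde y\in\SN{N-1}$ integral into a change of variables in which the ratio powers $(\tilde y_\ell/y_\ell)^{\thetac_N-\thetac_\ell}$, the exponential factors, and the inverse-gamma jump factors from $\Pker{N-1}{\thetarc^{[n]}}$ must combine with the Jacobian to reproduce the LHS exactly. This alignment is precisely what forces the parameter form $\thetarc_{i,j}=\thetar_i+\thetac_j$ in Definition \ref{def-d} and encodes the integrability of the inverse-gamma model.
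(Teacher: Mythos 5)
Your proposal is correct and follows essentially the same route as the paper's proof: the border kernel $B_N$ you introduce is exactly the paper's $\Laker{N}{\thetacv}$, and your ``border identity'' is precisely the paper's Lemma~\ref{lm:2l-int} (after unpacking the definition of $\Rker{N}{\thetarc^{[n]}}$), which the paper then verifies by the same change-of-variables mechanism you describe. The only thing left for you to do is carry out that final explicit density computation, which the paper does in its proof of Lemma~\ref{lm:2l-int}.
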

This is proved in Section \ref{itpr1proofsec}.

\begin{rem}
The above proposition along with the existence of the kernels $\Pker{N}{\thetarc^{[n]}}$ and $\Kker{N}{\thetacv}$ are the key to the main argument of this paper.  A priori, the existence of such kernels is not guaranteed.  The inspiration behind finding these kernels came from the related work \cite{OCon} (in which the kernel $\Kker{N}{\thetacv}$ also appears and plays a similarly important role) as well as earlier work in which analogous results were obtained in the context of the usual RSK correspondence~\cite{OCon2,OCon3,BBO1,BBO2} (see also \S\S~\ref{pitman} below). In the context of the RSK correspondence with geometric or exponentially distributed weights, the kernel $\Kker{N}{\thetacv}(y,dz)$ is replaced by the indicator function over Gelfand-Tsetlin patterns (interlacing triangular arrays) $z$ with bottom row $y$, with an exponential factor depending on the type of the pattern; and the kernel
$\Pker{N}{\thetarc^{[n]}}(y,d\tilde y)$ is replaced by the substochastic kernel of independent geometrically or exponentially distributed additive jumps from
$y$ to $\tilde y$ subject to the constraint that $\tilde y$ is interlaced with $y$.
\end{rem}

\medskip

The kernels above are not normalized. However, using the intertwining relation it is now simple to determine the necessary normalizing functions. For $y\in \SN{N}$, define
\begin{equation}
\wfunc{N}{\thetacv}(y)=\int_{\TN} \Kker{N}{\thetacv}(y,dz).
\label{wfunct}
\end{equation}
  Integrating  the intertwining (\ref{itPKKPI})  yields the eigenfunction relation
\be \Pker{N}{\thetarc^{[n]}} \wfunc{N}{\thetacv}=\wfunc{N}{\thetacv}.
\label{Pw=w}\ee
Thus we can define a stochastic kernel on $\SN{N}$  by
\begin{equation}
\Pbarker{N}{\thetarc^{[n]}}(y,d\tilde y)=
\frac{\wfunc{N}{\thetacv}(\tilde y)}{\wfunc{N}{\thetacv}(y)}
\Pker{N}{\thetarc^{[n]}}(y,d\tilde y)
\label{Pbarker}\end{equation}
and from $\SN{N}$ to $\TN$ by
\be
\Kbarker{N}{\thetacv}(y,dz)= \frac1{\wfunc{N}{\thetacv}(y)} \Kker{N}{\thetacv}(y,dz).
\label{Kbarker}\ee
The kernel $\Kbarker{N}{\thetacv}(y,dz)$ should be interpreted as the distribution of
the pattern in $\TN$ conditioned on the bottom row $\zarr^{[N]}$ being equal to $y$.
From the previous proposition follows:

\begin{corollary}\label{itpr2}
The following intertwining relation holds at all times $n\ge 1$:
\begin{equation}\label{itQKKPI}
\Pbarker{N}{\thetarc^{[n]}}\Kbarker{N}{\thetacv} = \Kbarker{N}{\thetacv}  \Piker{N}{\thetarc^{[n]}},
\end{equation}
where both sides are  operators from $\SN{N}$ to $\TN$.
 \end{corollary}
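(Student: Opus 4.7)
The plan is to derive the barred intertwining \eqref{itQKKPI} directly from the unbarred one \eqref{itPKKPI} by multiplying both sides by the scalar factor $1/\wfunc{N}{\thetacv}(y)$ and using that $\wfunc{N}{\thetacv}$ is an eigenfunction of $\Pker{N}{\thetarc^{[n]}}$. Concretely, I will unfold the definitions \eqref{Pbarker} and \eqref{Kbarker}, so that the left-hand side of \eqref{itQKKPI} becomes
\begin{equation*}
(\Pbarker{N}{\thetarc^{[n]}}\Kbarker{N}{\thetacv})(y,dz)
= \int_{\SN{N}} \frac{\wfunc{N}{\thetacv}(\tilde y)}{\wfunc{N}{\thetacv}(y)}\Pker{N}{\thetarc^{[n]}}(y,d\tilde y)\cdot \frac{1}{\wfunc{N}{\thetacv}(\tilde y)}\Kker{N}{\thetacv}(\tilde y,dz).
\end{equation*}
The factors $\wfunc{N}{\thetacv}(\tilde y)$ cancel inside the integral, and pulling $\wfunc{N}{\thetacv}(y)^{-1}$ outside leaves $\wfunc{N}{\thetacv}(y)^{-1}(\Pker{N}{\thetarc^{[n]}}\Kker{N}{\thetacv})(y,dz)$.

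Next I invoke Proposition \ref{itpr1} to replace $\Pker{N}{\thetarc^{[n]}}\Kker{N}{\thetacv}$ by $\Kker{N}{\thetacv}\Piker{N}{\thetarc^{[n]}}$. Since the transition kernel $\Piker{N}{\thetarc^{[n]}}$ acts on the $\TN$-variable and the weight $\wfunc{N}{\thetacv}(y)^{-1}$ depends only on $y\in\SN{N}$, the scalar passes through the composition and recombines with $\Kker{N}{\thetacv}$ to give $\Kbarker{N}{\thetacv}$, yielding the right-hand side of \eqref{itQKKPI}.

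Before concluding I would also record the small sanity check that the normalization is legitimate, namely that $\Pbarker{N}{\thetarc^{[n]}}$ is an honest stochastic kernel on $\SN{N}$ and $\Kbarker{N}{\thetacv}$ is a stochastic kernel from $\SN{N}$ to $\TN$. The first follows from the eigenfunction identity \eqref{Pw=w}:
\begin{equation*}
\int_{\SN{N}} \Pbarker{N}{\thetarc^{[n]}}(y,d\tilde y) = \frac{1}{\wfunc{N}{\thetacv}(y)}\int_{\SN{N}} \wfunc{N}{\thetacv}(\tilde y)\,\Pker{N}{\thetarc^{[n]}}(y,d\tilde y) = \frac{\wfunc{N}{\thetacv}(y)}{\wfunc{N}{\thetacv}(y)} = 1,
\end{equation*}
and the second is immediate from the definition \eqref{wfunct} of $\wfunc{N}{\thetacv}$. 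No step here is hard; the only subtlety worth flagging is that the intertwining kernel $\Kker{N}{\thetacv}$ depends solely on the column parameters $\thetacv$ (and not on the time index $n$), which is precisely what makes the same normalizing function $\wfunc{N}{\thetacv}$ work simultaneously on both sides of \eqref{itQKKPI} for every $n$, and what allows the eigenfunction identity \eqref{Pw=w} to be used without bookkeeping over $n$.
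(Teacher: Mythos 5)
Your argument is correct and is exactly the computation the paper leaves implicit when it writes ``From the previous proposition follows'': unfold \eqref{Pbarker} and \eqref{Kbarker}, cancel $\wfunc{N}{\thetacv}(\tilde y)$ under the integral, pull out $\wfunc{N}{\thetacv}(y)^{-1}$, apply Proposition \ref{itpr1}, and reabsorb the prefactor into $\Kbarker{N}{\thetacv}$. Your remarks on the stochasticity of the barred kernels (via \eqref{Pw=w} and \eqref{wfunct}) and on the $n$-independence of $\Kker{N}{\thetacv}$ are accurate and correctly identify why a single normalizing function works for all $n$.
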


\subsection{Whittaker functions}\label{WhitFUnSect} For the next stage, note that the kernels above
remain perfectly well-defined if we allow   parameter vector $\thetacv$ to be complex.
The probabilistic meanings are lost but the intertwining continues to work.
For $y\in (0,\infty)^N$ and $\lambda\in \C^N$, define
\begin{equation}\label{Whittaker 1}
\Mker{N}{\lambda}(y,dz)= \prod_{i=1}^N y_i^{-\lambda_i} \Kker{N}{\lambda} (y,dz)
\end{equation}
and
\begin{equation}\label{Whittaker 2}
\wf{N}{\lambda}(y)=\int_{\TN} \Mker{N}{\lambda}(y,dz) = \prod_{i=1}^N y_i^{-\lambda_i}\wfunc{N}{\lambda}(y).
\end{equation}
The functions $\wf{N}{\lambda}$, well-defined for any $\lambda\in\C^N$, are class-one $GL(N,\R)$-Whittaker functions (in multiplicative variables).
They arise in various contexts:  they are eigenfunctions of the quantum Toda lattice (when expressed in additive variables $x_i=\log y_i$)
and can be represented as particular matrix elements of infinite-dimensional representations of ${\mathfrak gl}(N)$~\cite{k};
they also arise in the harmonic analysis of automorphic
forms on $GL(N,\R)$~\cite{bump}. The integral representation (\ref{Whittaker 2}) is due to Givental~\cite{givental}.
It is known~\cite{sts,KL} that the integral transform
\begin{equation}\label{WhittakerTransform}
\hat{f}(\lambda) = \int_{\RNplus} \prod_{i=1}^{N} \frac{dy_i}{y_i} f(y) \wf{N}{\lambda}(y)
\end{equation}
defines an isometry of $L_2(\RNplus,\prod_i\,dy_i/y_i)$ onto $L_2^{sym}(\iota \R^N,s_N(\lambda)d\lambda)$,
where $L_2^{sym}$ is the space of $L^2$ functions which are symmetric in their variables, $\iota=\sqrt{-1}$ is the imaginary unit and
\begin{equation}\label{Whittaker 7}
s_N(\lambda)=\frac1{(2\pi \iota)^N N!} \prod_{j\ne k} \Gamma(\lambda_j-\lambda_k)^{-1}.
\end{equation}
The inversion formula is
\begin{equation}\label{InverseTransform}
\check{g}(y) = \int_{\iota \R^N} g(\lambda) \overline{\wf{N}{\lambda}(y)} s_N(\lambda)  d\lambda.
\end{equation}
In particular, the Plancherel formula
\begin{eqnarray}\label{Planch}
\int_{\RNplus}f(y)\overline{g(y)} \prod_{i=1}^N\frac{dy_i}{y_i}=\int_{\iota\mathbb{R}^N} \hat{f}(\lambda)\overline{\hat{g}(\lambda)}s_N(\lambda)d\lambda
\end{eqnarray}
holds for functions $f,g\in L^2(\RNplus,\prod\,dy_i/y_i)$.





We also have the Whittaker integral identity~\cite{B,St,GLO}, for $s>0$
and $\lambda,\nu\in\C^N$, 
\begin{equation}\label{bs}
\int _{\RNplus}e^{-s y_1} \wf{N}{\lambda}(y)\wf{N}{\nu}(y)\prod_{i=1}^N\frac{dy_i}{y_i} =
s^{\sum (\lambda_i+ \nu_i)} \prod_{i,j}\Gamma(-\lambda_i-\nu_j).
\end{equation}
Using $\Psi_\theta(y)=\Psi_{-\theta}(y')$, where $y'_i=y_{N-i+1}^{-1}$, this is equivalent to
\begin{equation}\label{bs'}
\int _{\RNplus}e^{-s y_N^{-1}} \wf{N}{\lambda}(y)\wf{N}{\nu}(y)\prod_{i=1}^N\frac{dy_i}{y_i} =
s^{-\sum (\lambda_i+ \nu_i)} \prod_{i,j}\Gamma(\lambda_i+\nu_j).
\end{equation}

Note that if, for $\zarr\in \TN$, we define $x_i(z)$ by
$$\prod_{i=1}^k x_i(z) = \prod_{i=1}^k \zarr_{k,i},\qquad k=1,\ldots,N,$$
then
\begin{equation}\label{Whittaker 3}
\Mker{N}{\thetacv}(y,dz)=\prod_{i=1}^N x_i(z)^{-\theta_i} \Mker{N}{}(y,dz),
\end{equation}
where
\begin{equation}\label{Whittaker 4}
\Mker{N}{}(y,dz)= \prod_{1\le \ell\le k<N}
\exp\left(
-\,\frac{\zarr_{k,\ell}}{\zarr_{k+1, \ell}}-\frac{\zarr_{k+1,\ell+1}}{\zarr_{k,\ell}} \right) \frac{d\zarr_{k,\ell}}{\zarr_{k,\ell}} \, \prod_{\ell=1}^N \delta_{y_\ell}(d\zarr_{N\ell}).
\end{equation}
For $n\ge 0$ and $i=1,\ldots,N$ write $x_i(n)=x_i(\zarr(n))$.  Then $x_i(n)$ is a
multiplicative random walk:
\begin{equation}\label{Whittaker 5}
x_i(n)=\left(\prod_{m=1}^n d_{m,i}\right) x_i(0).
\end{equation}

\subsection{Main theorems}
We  are prepared to state the two main theorems of the paper. They are proved in Sections \ref{markovprojthmproofsec} and \ref{main corollaryproofsec}, respectively. The first result is concerned with the solvability of the $\proj$ projection of the $\zarr(n)$ Markov chain corresponding to the recursive system in equation (\ref{NYalg1}).

\begin{theorem}\label{markovprojthm}
Fix a solvable inverse-gamma weight matrix defined in terms of parameters $(\thetar_m: m\geq 1)$ and $(\thetac_j: 1\le j\le N)$ and assume (without loss of generality) that   $\thetac_j<0<\thetar_m$ for all $j,m$.
Let $y(0)$ be a random or deterministic initial state in $\SN{N}$ and let the initial distribution of $\zarr(0)$ be
$\Kbarker{N}{\thetacv}(y(0),\,\cdot\,)$.
\begin{enumerate}
\item[(i)]
The sequence of random variables $y(n)=\proj(\zarr(n)), n\ge 0$, is a Markov chain with respect to its own filtration,
with state space $\SN{N}$, initial state $y(0)$  and time $n$ transition kernel $\Pbarker{N}{\thetarc^{[n]}}$.
\item[(ii)] For a bounded Borel function $f$ on $\TN$ and $y\in\SN{N}$
\begin{eqnarray}\label{Thm rel 1}
 E[ f(\zarr(n))\,\vert\, y(0),\dotsc,y(n-1), y(n)=y]
= \int_{\TN} \Kbarker{N}{\thetacv}(y,dz)\, f(z).
\end{eqnarray}
\item[(iii)]  For $\lambda\in\C^N$
\begin{eqnarray}\label{Thm rel 2}
E\Bigl[\;\prod_{i=1}^N x_i(n)^{-\lambda_i}\Big\vert \, y(0),\dotsc,y(n-1), y(n)=y\Bigr]
 = \frac{\wf{N}{\thetacv+\lambda}(y)}
{\wf{N}{\thetacv}(y)}.
\end{eqnarray}
\item[(iv)]
For an initial state $y^0\in\SN{N}$ and time $\ge 1$,  let $\mu_n^N(y^0,dy)$
denote the probability distribution of the time-$n$ state $y(n)$.
 Then for all $\lambda\in\iota\R^N$,
\begin{eqnarray}\label{Thm rel 3}
 \int_{\RNplus} \frac{\wf{N}{\lambda}(y)}{\wf{N}{\thetacv}(y)} \mu_n^N(y^0,dy)
= \frac{\wf{N}{\lambda}(y^0)}{\wf{N}{\thetacv}(y^0)} \prod_{m=1}^n\prod_{i=1}^N
\frac{\Gamma(\thetar_m+\lambda_i)}{\Gamma(\thetac_i+\thetar_m)}.
\end{eqnarray}
Moreover, for any continuous, compactly supported function $f$ on $\RNplus$ we have
\begin{eqnarray}\label{int_mu}
&&\\
&&\int_{\mathbb{R}^N_+} f(y) \mu_n^N(y^0,dy)=
\int_{\iota \mathbb{R}^N} d\lambda \, s_N(\lambda)  \frac{\Psi^N_{\lambda}(y^0)}{\Psi^N_\theta(y^0)}
 \left( \int_{\mathbb{R}^N_+} f(y) \Psi^N_\theta(y) \Psi^N_{-\lambda}(y) \prod_i\frac{dy_i}{y_i}\right)
 \prod_{m=1}^n\prod_{i=1}^N\frac{\Gamma(\hat\theta_m+\lambda_i)}{\Gamma(\theta_i+\hat\theta_m)}
 \nonumber.
\end{eqnarray}


\end{enumerate}
\end{theorem}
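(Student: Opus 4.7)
Parts (i) and (ii) I would deduce directly from Proposition \ref{MarkovfunctionsProp}, applied with $T=\TN$, $S=\SN{N}$, the map $\proj$ being projection onto the bottom row of the array, the Markov kernels $\Piker{N}{\thetarc^{[n]}}$, and the intertwining kernel $\Kbarker{N}{\thetacv}$. The point-mass hypothesis $\Kbarker{N}{\thetacv}(y,\proj^{-1}(y))=1$ is immediate from the Dirac-delta factor $\prod_{\ell}\delta_{y_\ell}(dz_{N,\ell})$ built into \eqref{Kker}, and the intertwining hypothesis is precisely Corollary \ref{itpr2}. Conclusion (i) of the proposition then yields \eqref{Thm rel 1} and the Markov property with kernel $\Pbarker{N}{\thetarc^{[n]}}$.

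For part (iii), using (ii) the conditional expectation equals $\int\Kbarker{N}{\thetacv}(y,dz)\prod_i x_i(z)^{-\lambda_i}$. I would unfold $\Kker{N}{\thetacv}$ via the identities $\Kker{N}{\thetacv}(y,dz)=\prod_i y_i^{\thetac_i}\Mker{N}{\thetacv}(y,dz)$ and \eqref{Whittaker 3}, obtaining
\begin{equation*}
\int\Kker{N}{\thetacv}(y,dz)\prod_i x_i(z)^{-\lambda_i}=\prod_i y_i^{\thetac_i}\int\Mker{N}{}(y,dz)\prod_i x_i(z)^{-(\thetac_i+\lambda_i)}=\prod_i y_i^{\thetac_i}\wf{N}{\thetacv+\lambda}(y),
\end{equation*}
and likewise $\wfunc{N}{\thetacv}(y)=\prod_i y_i^{\thetac_i}\wf{N}{\thetacv}(y)$. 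Dividing gives \eqref{Thm rel 2}.

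For the first identity \eqref{Thm rel 3} of part (iv), I would substitute $\lambda\mapsto\lambda-\thetacv$ in (iii) to obtain $E\bigl[\prod_i x_i(n)^{\thetac_i-\lambda_i}\,\big|\,\sigma\{y(0),\ldots,y(n)\}\bigr]=\wf{N}{\lambda}(y(n))/\wf{N}{\thetacv}(y(n))$. Taking expectations on both sides and using the tower property,
\begin{equation*}
E\!\left[\frac{\wf{N}{\lambda}(y(n))}{\wf{N}{\thetacv}(y(n))}\right]=E\!\left[\prod_i x_i(n)^{\thetac_i-\lambda_i}\right].
\end{equation*}
Then I would split using \eqref{Whittaker 5} and independence into $E\bigl[\prod_i x_i(0)^{\thetac_i-\lambda_i}\bigm|y(0)=y^0\bigr]\cdot\prod_{m=1}^n\prod_{i=1}^N E[d_{m,i}^{\thetac_i-\lambda_i}]$. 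The first factor equals $\wf{N}{\lambda}(y^0)/\wf{N}{\thetacv}(y^0)$ by applying (iii) to the initial state, and the inverse-gamma moment identity $E[d^{-s}]=\Gamma(\gamma+s)/\Gamma(\gamma)$ for $d\sim\Gamma^{-1}(\gamma)$ gives $E[d_{m,i}^{\thetac_i-\lambda_i}]=\Gamma(\thetar_m+\lambda_i)/\Gamma(\thetar_m+\thetac_i)$; here $\lambda\in\iota\R^N$ and $\thetar_m>0$ ensure convergence.

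The second identity \eqref{int_mu} I would then obtain from \eqref{Thm rel 3} by Plancherel \eqref{Planch}. Concretely, I would apply \eqref{Planch} with $g(y)=f(y)\wf{N}{\thetacv}(y)$ and $h(y)$ equal to the density of $\mu_n^N(y^0,\cdot)$ with respect to $\prod_i dy_i/y_i$ divided by $\wf{N}{\thetacv}(y)$. Identity \eqref{Thm rel 3} identifies $\hat h$ as the right-hand side there, and then using $\overline{\wf{N}{\lambda}(y)}=\wf{N}{-\lambda}(y)$ for $y\in\RNplus$ and $\lambda\in\iota\R^N$ (from the integral representation \eqref{Whittaker 2}), together with the symmetry $s_N(-\lambda)=s_N(\lambda)$, a change of variables $\lambda\mapsto-\lambda$ on $\iota\R^N$ yields exactly \eqref{int_mu}. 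The main obstacle is the justification of Plancherel here: strictly speaking it is an $L^2$ isometry, and the density of $\mu_n^N(y^0,\cdot)$ divided by $\wf{N}{\thetacv}(y)$ may not lie in $L^2$ a priori, so I would either argue by approximation (cutting off $f$ and the region of integration, then passing to the limit using Fubini and the compact support of $f$) or by interpreting the transform as a tempered-distribution inversion. The restriction to continuous compactly supported $f$ in the statement is exactly what is needed to absorb this technicality.
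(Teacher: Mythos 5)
Your approach matches the paper's for all parts: (i)--(ii) via Rogers--Pitman using Corollary \ref{itpr2} and the Dirac-delta normalization, (iii) by unfolding the kernels, and \eqref{int_mu} by Plancherel. For \eqref{Thm rel 3} your route is a little more explicit than what the paper writes: after taking expectations you factor $x_i(n)=(\prod_m d_{m,i})\,x_i(0)$ via \eqref{Whittaker 5}, use independence of $d$ from $z(0)$, and evaluate inverse-gamma moments directly. The paper instead gestures at iterating the spectral relation \eqref{Pspec} from Remark \ref{spec-remark}; both give the same product of Gamma ratios, and your version is arguably cleaner because it makes the source of the $\Gamma(\thetar_m+\lambda_i)/\Gamma(\thetac_i+\thetar_m)$ factors transparent as moment generating functions of $\log d_{m,i}$.

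Two technical points. First, in part (iii) the function $f(z)=\prod_i x_i(z)^{-\lambda_i}$ is unbounded when $\lambda$ has nonzero real part, so \eqref{Thm rel 1} does not apply directly. The paper closes this by truncating $f_n(x)=f(x)\mathbf 1_{\{n^{-1}\le x\le n\}}$ and invoking dominated convergence together with finiteness of $\int_{\TN}\Kbarker{N}{\thetacv}(y,dz)|f(z)|$; you should say the same.

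Second, and this is the genuine gap: your derivation of \eqref{int_mu} requires that the density of $\mu_n^N(y^0,\cdot)$ with respect to $\prod_i dy_i/y_i$, divided by $\wf{N}{\thetacv}(y)$, lie in $L^2(\RNplus,\prod_i dy_i/y_i)$, and you correctly flag that this is not automatic but then leave it at ``approximation'' or ``tempered distributions'' without an argument. Approximation is not obviously benign here because one is integrating against the nonintegrable Whittaker kernel on $\iota\R^N$, and a distributional reading of \eqref{Planch} would itself need justification. The paper proves a dedicated lemma: writing $p^N_n$ for the $n$-fold composition of the substochastic kernels $\Pker{N}{\thetarc^{[\cdot]}}$, it bounds $(p^N_n)^2$ by dropping the killing factor, applies Jensen's inequality to pass the square inside the last transition, and then factors the resulting integral over the $N$ independent one-dimensional multiplicative random walks; finiteness of the constants uses $\thetar_n>0$ and finiteness of $\mathbb E[d_{i,j}^{2\thetac_j}]$, which is exactly where the standing assumption $\thetac_j<\thetar_i$ (hence $\thetac_j-\thetar_i<0$) enters. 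Without that estimate the Plancherel step is not justified, so this is the piece you need to supply.
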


\begin{rem}\label{spec-remark}
Note that, given part (i) of the above theorem,
\eqref{Thm rel 3} shows that one may diagonalize the transition kernel
via the eigenfunction equation
\be\label{Pspec}
  \Pbarker{N}{\thetarc^{[n]}} \frac{\wf{N}{\lambda}}{\wf{N}{\thetacv}}
  =
 \biggl(\; \prod_{j=1}^N
\frac{\Gamma(\thetar_n+\lambda_j)}{\Gamma(\thetac_j+\thetar_n)} \biggr)
\frac{\wf{N}{\lambda}}{\wf{N}{\thetacv}},
\ee
which can also be seen directly from the intertwining relation (\ref{itPKKPI}), cf. (\ref{Pw=w}). By applying the completeness relation resulting from the $L^2$ isometry, this identity characterizes the transition kernel.
\end{rem}

Next we specialize the above result to the Markov chain $y(n)$ that comes from the evolving shape of the geometric RSK array $P_{n,N}(d^{[1,n]})$ of \eqref{PnN}. This  is the case of the  empty initial array. We can capture this situation by taking a somewhat delicate limit of the initial state $y^0$.   This is our second main result.

\begin{theorem}\label{main corollary}
Fix a solvable inverse-gamma weight matrix defined in terms of parameters $(\thetar_m: m\geq 1)$ and $(\thetac_j: 1\le j\le N)$ and assume (without loss of generality) that
$\thetac_j<0<\thetar_m$ for all $j,m$.

 Consider the array $\rho=(\rho_{k,\ell})_{1\leq \ell\leq k \leq N}$ with
\be
\rho_k=(\rho_{k,\ell})_{1\leq \ell \leq k}=\left( \frac{k-1}{2},\frac{k-1}{2}-1,\dots, -\frac{k-1}{2}\right),
\label{rho}\ee
for $1\leq k\leq N$.
Let $y^{0,M}=\left(e^{-M\rho_{N,\ell}}\right)_{1\leq \ell \leq N}$ and $n\geq N$. Then
\begin{enumerate}

\item[(i)]
As $M\to \infty$, the probability distribution $\mu_n^N(y^{0,M},dy)$ converges weakly to a distribution $\mu_n^N(dy)$ characterized by
\begin{eqnarray*}
\int_{\mathbb{R}^N_+} f(y) \mu_n^N(dy)=
\int_{\iota \mathbb{R}^N} d\lambda \, s_N(\lambda)
 \left( \int_{\mathbb{R}^N_+} f(y) \Psi_\theta(y) \Psi_{-\lambda}(y) \prod_i\frac{dy_i}{y_i}\right)
 \prod_{m=1}^n\prod_{i=1}^N\frac{\Gamma(\hat\theta_m+\lambda_i)}{\Gamma(\theta_i+\hat\theta_m)},
\end{eqnarray*}
for any continuous, compactly supported function $f$ on $\RNplus$.

\item[(ii)] The Laplace transform of the projection of $\mu_n^N(dy)$ on the first coordinate
is given by
\begin{equation}\label{laplaceeqn}
\int_{\RNplus}e^{-sy_1}\mu_n^N(dy)=
  \int_{\iota \mathbb{R}^N}d\lambda\, s^{\sum_{i=1}^N(\theta_i-\lambda_i)}
          \prod_{1\leq i,j\leq N}\Gamma(\lambda_i-\theta_j) \,\prod_{m=1}^n\prod_{i=1}^N\frac{\Gamma(\lambda_i+\hat\theta_m)}{\Gamma(\theta_i+\hat\theta_m)} s_N(\lambda),
\end{equation}
where the poles of the functions $\Gamma(\lambda_i-\theta_j)$ and $\Gamma(\lambda_i+\hat\theta_m)$ are not encountered due to the assumed condition that $\thetar_m>0$ for all $m$ and $\thetac_j<0$ for all $j$.
\item[(iii)]  The measure $\mu_n^N(dy)$ is the distribution of the bottom row $y(n)$,
given that the process begins with the empty array.  In particular,
the distribution of the partition function
$\zarr_{N,1}(n)=\sum_{\pi\in \Pi^1_{n,N}}wt(\pi)$ is the marginal distribution of $\mu_n^N(dy)$ on the first coordinate $y_1$, and hence uniquely characterized by \eqref{laplaceeqn}.
\item[(iv)] The distribution of $P_{n,N}(d^{[1,n]})$ is the measure in $dz$ given by
$$\int \mu_n^N(dy) \Kbarker{N}{\thetacv}(y,dz).$$
\item[(v)] When $n=N$ we have the following simplification:
$$\mu_N^N(dy)=\prod_{m=1}^N\prod_{i=1}^N\Gamma(\theta_i+\hat\theta_m)^{-1} e^{-y_N^{-1}} \Psi_\theta(y) \Psi_{\hat\theta}(y)\prod_{i=1}^N\frac{dy_i}{y_i}.$$
In particular, for $s>0$,
$$\int_{\RNplus}e^{-sy_N^{-1}}\mu_N^N(dy)= (1+s)^{-\sum_{i=1}^N(\theta_i+\hat\theta_i)},$$
that is, the random variable $\zarr_{N,N}(N)$ is inverse gamma distributed with parameter $\sum_{i=1}^N(\theta_i+\hat\theta_i)$.
\end{enumerate}
\end{theorem}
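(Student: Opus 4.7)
The plan is to derive parts (i)--(v) from equation \eqref{int_mu} of Theorem \ref{markovprojthm}(iv), evaluated at $y^0=y^{0,M}$, by passing to the limit $M\to\infty$ and then specializing the test function $f$.

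For part (i), the main step is to show that, pointwise in $\lambda\in\iota\R^N$,
$$\lim_{M\to\infty}\frac{\Psi^N_\lambda(y^{0,M})}{\Psi^N_\theta(y^{0,M})}=1,$$
with bounds along $\iota\R^N$ uniform enough to permit dominated convergence in the outer $\lambda$-integral of \eqref{int_mu}. This is the step I expect to be the main obstacle. The approach is to insert the Givental representation \eqref{Whittaker 3}--\eqref{Whittaker 4} and perform the change of variables $z_{k,\ell}=e^{-M\rho_{k,\ell}}\zeta_{k,\ell}$ suggested by \eqref{rho}, so that the exponential factors in $M^N(y^{0,M},\cdot)$ concentrate as $M\to\infty$ on a $\lambda$-independent saddle configuration and the $\prod x_i(z)^{-\lambda_i}$ weighting cancels between numerator and denominator to leading order. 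The $N=2$ case, in which $\Psi^2_\lambda(y^{0,M})=2K_{\lambda_2-\lambda_1}(2e^{M/2})$ and the large-argument asymptotic $K_\nu(x)\sim\sqrt{\pi/(2x)}\,e^{-x}$ is $\nu$-independent to leading order, provides the prototype. Uniform tail control in $|\lambda|$ follows from Stirling-type estimates on the factor $\prod_{m,i}\Gamma(\hat\theta_m+\lambda_i)$ in the integrand, and together with the pointwise limit this yields the asserted characterization of $\mu_n^N(dy)$.

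For part (ii), substitute $f(y)=e^{-sy_1}$ into the formula from (i), justified by truncation/monotone approximation since $e^{-sy_1}$ is not compactly supported; the inner $y$-integral becomes the Bump--Stade identity \eqref{bs} with parameters $\theta$ and $-\lambda$, evaluating to $s^{\sum(\theta_i-\lambda_i)}\prod_{i,j}\Gamma(\lambda_j-\theta_i)$ and producing \eqref{laplaceeqn}. The sign hypothesis $\theta_j<0<\hat\theta_m$ keeps the contour $\iota\R^N$ clear of the poles. For part (v), set $n=N$ in (i) and recognize the resulting $\lambda$-integral as an inverse Whittaker transform: by \eqref{bs'} with $s=1$ and $\nu=\hat\theta$, the function $y\mapsto e^{-y_N^{-1}}\Psi^N_{\hat\theta}(y)$ has Whittaker transform $\prod_{i,j}\Gamma(\hat\theta_j+\mu_i)$, so the Plancherel inversion \eqref{InverseTransform} (together with $\overline{\Psi^N_\lambda(y)}=\Psi^N_{-\lambda}(y)$ for $\lambda\in\iota\R^N$) collapses $\int s_N(\lambda)\Psi^N_{-\lambda}(y)\prod_{m,i}\Gamma(\hat\theta_m+\lambda_i)\,d\lambda$ to $e^{-y_N^{-1}}\Psi^N_{\hat\theta}(y)$. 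Dividing by $\prod_{m,i}\Gamma(\theta_i+\hat\theta_m)$ gives the stated density for $\mu_N^N$, and the closed Laplace transform follows from one further application of \eqref{bs'} with $s$ replaced by $1+s$.

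Part (iv) is immediate from Proposition \ref{MarkovfunctionsProp}(i): conditional on $y(n)$, the array $z(n)$ is distributed as $\bar K^N_\theta(y(n),\cdot)$, so its unconditional law is the stated mixture. Finally, for part (iii), the measure $\mu_n^N$ produced in (i) must be identified with the law of the shape $y(n)=z_{N,\cdot}(n)$ of $z(n)=P_{n,N}(d^{[1,n]})$ started from the empty array. At time $n=N$ this follows by direct computation of the geometric RSK output density from the inverse-gamma weights and matching it with the closed form of (v); for $n>N$ it propagates by Theorem \ref{markovprojthm}(i), since once the array is full the empty-initialised $y(m)$ evolves under the same transition kernel $\Pbarker{N}{\gamma^{[m]}}$ as the chain considered there.
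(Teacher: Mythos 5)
Your plan for parts (i), (ii), and (v) is essentially the paper's argument. For (i), rather than redoing a saddle-point analysis of the Givental integral, the paper imports the pointwise limit $\Psi^N_\lambda(y^{0,M})/\Psi^N_\theta(y^{0,M})\to 1$ directly from \cite{OCon}, and the dominated-convergence bound comes from the simple observation that $|\Psi^N_\lambda(y^{0,M})|\le\Psi^N_0(y^{0,M})$ for $\lambda\in\iota\R^N$ (the Givental integrand is multiplied by a unimodular factor), which gives a $\lambda$-free domination; the $L^1$ control for $n\ge N$ via Stirling is exactly what you describe. Parts (ii) and (v) via Bump--Stade and Plancherel agree with the paper.

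The genuine gap is in your handling of (iii) and (iv). You claim part (iv) is ``immediate from Proposition \ref{MarkovfunctionsProp}(i),'' but that proposition applies only when the initial distribution of $z(0)$ has the form $\Kbarker{N}{\thetacv}(y^0,\cdot)$. The empty initial array $z(0)=\es$ is not of that form, so the proposition does not apply out of the box. This is precisely the obstacle the paper addresses with Proposition \ref{M-lim-pr}: one must prove a process-level weak convergence statement that the law of the entire trajectory $(z(1),\dotsc,z(n))$, started from $\Kbarker{N}{\theta}(y^{0,M},\cdot)$, converges as $M\to\infty$ to its law under the empty-start process. Establishing this is nontrivial --- the paper's proof passes to additive variables, identifies the maximizer $t^0$ of the strictly concave function $\mathcal F_0$ on $W(0)$ (Lemma \ref{F-lm}), and then analyses minors of $H$-matrices via the Lindstr\"om--Gessel--Viennot graphical formalism (Lemma \ref{M-lim13-lm}) to show that the entrance data $\zarr_{m+1}(m)$ converge to the singular values $e^{(N-m)}_1$ that define the empty start.

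Your proposed workaround for (iii) --- computing the density of $P_{N,N}(d^{[1,N]})$ directly from the inverse-gamma weights, matching it with (v), and then propagating for $n>N$ via Theorem \ref{markovprojthm}(i) --- has two problems. First, the direct density computation (which requires an explicit Jacobian of geometric RSK) is a substantial undertaking, not carried out in this paper (it is the subject of the companion work \cite{OSZ}); you cannot cite it as ``direct computation'' and move on. Second, the propagation for $n>N$ is circular: applying Theorem \ref{markovprojthm}(i) starting at time $N$ requires knowing that the conditional law of $z(N)$ given $y(N)$ is $\Kbarker{N}{\theta}(y(N),\cdot)$ --- i.e.\ part (iv) at $n=N$ --- which you have not independently established. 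The paper avoids this circularity by proving Proposition \ref{M-lim-pr} first, then deriving both (iii) and (iv) from it in a single chain of equalities that interchanges the $M\to\infty$ limit with the application of \eqref{Thm rel 1}. That limiting lemma is the missing centerpiece of your argument.
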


The distribution of $\zarr_{N,N}(N)$ can also be seen from \eqref{ztau-3}.

Observe that the condition of $n\geq N$ is not restrictive when it comes to computing the Laplace transform in part (ii) of the above theorem. Indeed, if one wishes to compute the Laplace transform for $n<N$ then it suffices to transpose the parameter matrix and switch the role of $n$ and $N$. The distribution of the coordinate $y_1$ is unchanged by this procedure, and now the above corollary applies.

\subsection{Pitman's $2M-X$ theorem}\label{pitman}
Theorem \ref{markovprojthm} can be regarded as a variant of Pitman's `$2M-X$ theorem', which states that, if $X_t$ is a standard one-dimensional Brownian motion and $M_t=\max_{s\le t} X_s$, then $2M_t-X_t$ is a three-dimensional Bessel process.
This theorem has vast generalizations~\cite{BOCon,BBO1,BBO2,BJ,DMO,KOR,MY,OCon,OCon2,OCon3,OCon4,OConYor2}, many of which have been obtained via various analogues the `Burke property' discussed in Remark \ref{burke-rem} below.
 All of these can be regarded as variations of the statement that the stochastic evolution of the shape of the tableaux, obtained when applying variants of the RSK algorithm to random input data, has the Markov property. The first `geometric' or `positive temperature' version of this statement was discovered by Matsumoto and Yor~\cite{MY}, who showed that, for $X_t$ as above, the process $\log\int_0^t e^{2X_s-X_t} ds,\ t>0$ is a diffusion on $\R$ with infinitesimal generator given by
$$
\frac12 \frac{d^2}{dx^2}+\left(\frac{d}{dx}\log K_0(e^{-x})\right) \frac{d}{dx},
$$
where $K_0$ is the Macdonald function (with index 0).  A multi-dimensional version of this theorem of Matsumoto and Yor is given in~\cite{OCon}, which can be regarded as a particular specialization (scaling limit) of the main result in the present paper.
It is also proved via an intertwining relation and is closely related to the quantum Toda lattice.  The corresponding directed
polymer model is defined on the semi-lattice $\Z\times\R$.  Both models feature the $GL(N,\R)$-Whittaker functions in an essential way; the
relation between them is analogous to the relation between the Gaussian and Laguerre unitary ensembles in random matrix theory.



\subsection{Invariant distributions} 
\label{burkesec}
The Markov process defined by row insertion with a solvable inverse gamma parameter matrix turns out to have nice  invariant distributions. The $z$-array itself cannot have an invariant distribution:  for example, $\zarr_{1,1}(n)=d_{n,1}\dotsm d_{1,1}\zarr_{1,1}(0)$ evolves as a
multiplicative random walk. Instead,  we look at ratios  of $\zarr$-values.

Fix $N\ge 1$.
For an array $z\in\TN$ define the array $\zratio=(\zratio_{k\ell})_{1\le\ell<k\le N}$ of ratios by
\[  \zratio_{k,\ell}=\frac{\zarr_{k,\ell}}{\zarr_{k-1,\ell}}\,,\quad  1\le\ell<k\le N.  \]
The Markov process $\zarr(n)$ then defines another random process
$\zratio(n)=(\zratio_{k\ell}(n))_{1\le\ell<k\le N}$  by   $\zratio_{k,\ell}(n)= {\zarr_{k,\ell}(n)}/{\zarr_{k-1,\ell}(n)}$.  Denote again diagonals by
$\zratio_\ell(n)=(\zratio_{k\ell}(n))_{\ell<k\le N}$ for $1\le\ell<N$.
 This new process $\zratio(n)$ will also  be a Markov chain.

\begin{theorem}  Let $\zarr(n)$ evolve on the space $\TN$ according to the Markovian dynamics
governed by a solvable inverse-gamma weight matrix with parameters
$\thetarc_{i,j} = \thetar_i + \thetac_j $,
 as specified by the transition kernels in \eqref{Piker}.

{\rm (a)} The process $\zratio(n)$ is a Markov chain in its own filtration.

{\rm (b)}   Let $1\le j<N$.
Assume $\thetac_1<\thetac_2<\dotsm<\thetac_j<\min\{\thetac_{j+1},\dotsc,\thetac_N\}$.  Then the process
$(\zratio_1(n),\dotsc,\zratio_j(n))$ has an invariant distribution where the variables $\{\zratio_{k\ell}: 1\le \ell\le j,\, \ell<k\le N\}$
are independent with marginal distributions
$\zratio_{k\ell}\sim\Gamma^{-1}(\thetac_k-\thetac_\ell)$.
If the process is started with this distribution, then the following statement holds
for all times $n\ge 1$:  the variables
$\{\zratio_{k\ell}(n) : 1\le\ell\le j,\, \ell<k\le N\}\cup
\{ \zarr_{N\ell}(m)/\zarr_{N\ell}(m-1): 1\le m\le n, \, 1\le\ell\le j\}$ are independent with
marginals $\zratio_{k\ell}(n)\sim\Gamma^{-1}(\thetac_k-\thetac_\ell)$ and
$ \zarr_{N\ell}(m)/\zarr_{N\ell}(m-1)\sim\Gamma^{-1}(\thetar_m+\thetac_\ell)$.
\label{z-burkethm}\end{theorem}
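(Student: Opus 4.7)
The plan is to prove part (a) via a scale-invariance argument that makes $\eta(n)$ a deterministic function of $\eta(n-1)$ and the fresh input $d^{[n]}$, and to prove part (b) by a Beta-Gamma propagation applied to the recursion so produced, starting with the case $j=1$ and inductively extending to $j\ge 2$.

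For part (a), I observe that the row-insertion map $(z,d)\mapsto z'$ given by \eqref{row-k-eq} is equivariant under the $N$-parameter group of per-diagonal rescalings: replacing each diagonal $z_\ell$ by $\lambda_\ell z_\ell$ produces output $\lambda_\ell z'_\ell$ on the same diagonal. One checks this directly from \eqref{row-k-eq}: in the middle equation the first factor $z_{k,\ell-1}\tilde z_{k-1,\ell-1}/z_{k-1,\ell-1}$ scales by $\lambda_{\ell-1}$, the second factor $(z_{k,\ell}+\tilde z_{k-1,\ell})/(z_{k,\ell-1}+\tilde z_{k-1,\ell-1})$ scales by $\lambda_\ell/\lambda_{\ell-1}$, with net effect $\lambda_\ell$; the edge equations for $\tilde z_{k,1}$ and $\tilde z_{k,k}$ scale analogously. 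The $\eta$-variables are manifestly invariant under per-diagonal rescaling, and there are exactly $\binom{N}{2}=\binom{N+1}{2}-N$ of them, so they form a complete system of invariants for this scaling action on $\TN$. It follows that $\eta(n)=F(\eta(n-1),d^{[n]})$ for some deterministic map $F$, which together with independence of the rows $d^{[n]}$ across $n$ yields the Markov property of part (a).

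For part (b), begin with $j=1$. Introduce the time-update ratios $\xi_{k,1}(n)=\tilde z_{k,1}/z_{k,1}$. From $\tilde z_{k,1}=d_{n,k}(z_{k,1}+\tilde z_{k-1,1})$ one reads off
\[
\xi_{k,1}(n)=d_{n,k}\bigl(1+\xi_{k-1,1}(n)/\eta_{k,1}(n-1)\bigr),\qquad \xi_{1,1}(n)=d_{n,1},
\]
together with $\eta_{k,1}(n)=\eta_{k,1}(n-1)\,\xi_{k,1}(n)/\xi_{k-1,1}(n)$. Setting $A_k=\eta_{k,1}(n-1)^{-1}$, $B_k=\xi_{k-1,1}(n)^{-1}$, $C_k=d_{n,k}^{-1}$, algebraic simplification gives $\xi_{k,1}(n)^{-1}=C_kB_k/(A_k+B_k)$ and $\eta_{k,1}(n)^{-1}=C_kA_k/(A_k+B_k)$. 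Proceeding inductively on $k$ under the hypothesis that $A_k$, $B_k$, $C_k$ are independent Gammas of parameters $\theta_k-\theta_1$, $\theta_1+\hat\theta_n$, $\theta_k+\hat\theta_n$ respectively, the Beta-Gamma identity (if $X\sim\Gamma(\alpha)$ and $Y\sim\Gamma(\beta)$ are independent then $X+Y\sim\Gamma(\alpha+\beta)$ and $X/(X+Y)\sim\mathrm{Beta}(\alpha,\beta)$ are independent, and conversely) shows that $\xi_{k,1}(n)^{-1}$ and $\eta_{k,1}(n)^{-1}$ are independent Gammas with parameters $\theta_1+\hat\theta_n$ and $\theta_k-\theta_1$, and that they are jointly independent of the $\eta_{k',1}(n)^{-1}$ produced at earlier rows $k'<k$. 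Propagating up the column delivers the one-step invariance, and iterating in $n$ with the independence of successive input rows $d^{[m]}$ gives the asserted joint independence of $\{\eta_{k,1}(n)\}_k$ and $\{\xi_{N,1}(m)\}_{1\le m\le n}$ with the stated marginals. For $j=1$ this is essentially the Burke-type calculation already present in \cite{S}.

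The main obstacle is the extension from $j=1$ to $j\ge 2$. The recursion for $\xi_{k,\ell}$ with $\ell\ge 2$, derived analogously from \eqref{row-k-eq}, couples diagonal $\ell$ to diagonal $\ell-1$ through the factor $\tilde z_{k-1,\ell-1}/z_{k-1,\ell-1}=\xi_{k-1,\ell-1}$. I would process the update row by row, and within each row diagonal by diagonal in the order $\ell=1,\dotsc,j$, using at each step the already-established joint distribution of the $\eta$'s and $\xi$'s at positions processed earlier; each individual identity remains a Beta-Gamma manipulation of the same flavor as in the $j=1$ case, applied inside the richer recursive system. The assumption $\theta_1<\dotsm<\theta_j<\min\{\theta_{j+1},\dotsc,\theta_N\}$ is precisely what keeps all the shape parameters $\theta_k-\theta_\ell$ (for $1\le\ell\le j$ and $\ell<k\le N$) positive, so the inverse-gamma laws are well-defined throughout the induction, and the claim about the outputs $\{z_{N,\ell}(m)/z_{N,\ell}(m-1)\}$ follows from iterating in $n$ and using independence of successive input rows.
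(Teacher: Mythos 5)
Your argument for part (a) is correct but takes a genuinely different route from the paper's. The paper (via Lemma~\ref{lm-NYalt}) derives \emph{explicit} autonomous update equations in the ratio variables and reads off the Markov property; you instead observe that the full update $z\mapsto \tilde z$ (for fixed input row $d^{[n]}$) is equivariant under the $(0,\infty)^N$-action of per-diagonal rescaling, that the ratios $\eta_{k\ell}$ form a complete system of invariants (the dimension count is consistent, and more to the point the $\eta$'s together with any one element per diagonal reconstruct $z$, so they separate orbits), and conclude that the dynamics descends to a deterministic map $\eta(n)=F(\eta(n-1),d^{[n]})$. This is more conceptual and avoids writing out $F$. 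On the other hand, an explicit form of $F$ is needed anyway for part (b), so in context the savings are modest; the paper's Lemma~\ref{lm-NYalt} serves double duty.

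For part (b), your $j=1$ computation is correct; after inverting ($A_k=\eta_{k,1}(n-1)^{-1}$, etc.) it is exactly the ordinary beta-gamma algebra and coincides in substance with the paper's Lemma~\ref{burkelm1} specialized to $\ell=1$. However, the $j\ge 2$ case -- which you flag yourself as ``the main obstacle'' -- is not actually resolved; you assert that the same ``flavor'' of manipulations will go through, but do not say what statement propagates or why. The paper's proof rests on a structural fact you do not state: a single geometric row insertion into diagonal $\ell$ takes inputs $(\eta_\ell, a_\ell)$ (the diagonal ratios and the carry word) and outputs $(\eta'_\ell, \zeta_{N\ell}, a_{\ell+1})$, and Lemma~\ref{burkelm1} shows that when $(\eta_\ell,a_\ell)$ are independent inverse-gammas with the appropriate parameters, the three outputs are again jointly independent inverse-gammas, with the new carry $a_{\ell+1}$ carrying \emph{the same} marginals $\Gamma^{-1}(\hat\theta_n+\theta_k)$ as $a_\ell$. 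This exact renewal of the carry's law is what lets the argument march cleanly across diagonals $\ell=1,\dotsc,j$ by induction on $\ell$. Your proposed row-by-row sweep respects the same dependency graph, but the partially processed region is then a staircase whose joint law has no obvious product structure, so at each step you would need a finer-grained independence claim that you neither formulate nor prove. Without Lemma~\ref{burkelm1} (or an equivalent ``the carry is renewed'' statement) the $j\ge 2$ case remains unproved in your proposal.
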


\begin{rem}
Theorem \ref{z-burkethm} is  an extension of a
  result of  \cite{S} for the directed polymer with inverse-gamma weights.
This   could be called a `Burke property'
by analogy with the Burke theorem (also known as the `output theorem') of M/M/1 queues.
According to the Burke theorem,  for a
reversible queue the number of customers in the system at time $t$ is independent
of the departure process up to time $t$.
This notion has an analogy  in models with random weight matrices, and it
has been used in the past to derive exact limit shapes
\cite{S98, S99}  and fluctuation exponents \cite{BCS, BalaS, CatG}.
 In fact, it was this property  that led us to investigate the solvability of
geometric RSK for  inverse-gamma weight matrices.
The analogous Burke property was found earlier in the Brownian polymer model
\cite{OConYor} and  was used to derive fluctuation exponents for that
model in \cite{SV}.
\label{burke-rem}\end{rem}

Theorem \ref{z-burkethm} is proved via  \eqref{NYalg}
that represents a transition of the Markov process $\zarr(n)$ in terms of
a sequence of geometric row insertions.   For this purpose we reformulate the row insertion
step in terms of the ratios.  In Definition \ref{NYdef} with fixed $1\le \ell<N$ the inputs of the row insertion
were $\xi=(\xi_{\ell},\ldots,\xi_N)$ and $b=(b_\ell,\ldots, b_N)$, and the outputs
  $\xi'=(\xi'_{\ell},\ldots, \xi'_N)$ and $b'=(b'_{\ell+1},\ldots, b'_N)$.   Define now
 $\zratio_k=\xi_k/\xi_{k-1}$  and $\zratio'_k=\xi'_k/\xi'_{k-1}$
  for $\ell<k\le N$, and also  auxiliary variables  $\zeta_k=\xi'_k/\xi_k$ for $\ell\le k\le N$.
The words are $\zratio=(\zratio_{\ell+1},\dotsc, \zratio_N)$ and  $\zratio'=(\zratio'_{\ell+1},\dotsc, \zratio'_N)$.

  \begin{lemma}\label{lm-NYalt}
Fix integers $1\le \ell\le N$.  In terms of the new  variables, geometric row insertion transforms
$(\zratio, b)$ into $(\zratio',b')$ via the following equations.  Set first $\zeta_\ell=b_\ell$,
and then inductively  for $k=\ell+1,\dotsc, N$:
\be
\zratio'_k=b_k\Bigl(1+ \frac{\zratio_k}{\zeta_{k-1}}\Bigr)\,, \quad
\zeta_k=b_k\Bigl(1+ \frac{\zeta_{k-1}}{\zratio_k}\Bigr)\,, \quad
\text{and}\quad     b'_k=\Bigl(\;\frac1{\zeta_{k-1}}+ \frac1{\zratio_k}\;\Bigr)^{-1}.
\label{z-burke1}\ee
\end{lemma}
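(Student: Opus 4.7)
The plan is to prove Lemma \ref{lm-NYalt} by direct algebraic substitution into the definitions in \eqref{g-row-ins}, proceeding inductively in $k$ from $\ell$ upwards. Since each of the three desired equations is a single identity between ratios of the original $\xi$- and $b$-variables, there is no structural obstacle here: the work is simply to express each ratio $\zratio'_k$, $\zeta_k$, $b'_k$ in terms of $b_k$, $\zratio_k$, and $\zeta_{k-1}$, using the elementary identity
\[
\frac{\xi_k}{\xi'_{k-1}} \;=\; \frac{\xi_k/\xi_{k-1}}{\xi'_{k-1}/\xi_{k-1}} \;=\; \frac{\zratio_k}{\zeta_{k-1}}\,,
\]
together with its reciprocal $\xi'_{k-1}/\xi_k = \zeta_{k-1}/\zratio_k$.

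For the base case I would observe that $\zeta_\ell = \xi'_\ell/\xi_\ell = b_\ell$ is immediate from $\xi'_\ell = b_\ell\xi_\ell$. For the inductive step at $k \ge \ell+1$, I would start from $\xi'_k = b_k(\xi'_{k-1} + \xi_k)$ and divide through once by $\xi'_{k-1}$ and once by $\xi_k$, obtaining
\[
\zratio'_k \;=\; b_k\Bigl(1 + \tfrac{\xi_k}{\xi'_{k-1}}\Bigr), \qquad
\zeta_k \;=\; b_k\Bigl(1 + \tfrac{\xi'_{k-1}}{\xi_k}\Bigr).
\]
Substituting the two ratio identities above converts these into exactly the first two formulas in \eqref{z-burke1}.

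For the output word $b'_k$, the original defining equation $b'_k = b_k\,\xi_k\xi'_{k-1}/(\xi_{k-1}\xi'_k)$ rearranges into $b'_k = b_k\,\zratio_k/\zratio'_k$. Plugging in the formula $\zratio'_k = b_k(1 + \zratio_k/\zeta_{k-1})$ just derived, the factors of $b_k$ cancel and one is left with
\[
b'_k \;=\; \frac{\zratio_k}{1+\zratio_k/\zeta_{k-1}} \;=\; \Bigl(\tfrac{1}{\zratio_k} + \tfrac{1}{\zeta_{k-1}}\Bigr)^{-1},
\]
matching the third equation of \eqref{z-burke1}. The induction then advances since $\zeta_k$ and $\zratio_k$ are now known and feed into the step at level $k+1$. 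No analytic or structural obstacle arises; the only thing to be careful about is that the auxiliary sequence $\zeta_k$ (rather than $\zratio'_k$ alone) must be carried through the recursion, because it is $\zeta_{k-1}$, not $\zratio'_{k-1}$, that appears on the right-hand side at stage $k$.
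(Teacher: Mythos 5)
Your proof is correct and is exactly the direct algebraic verification the paper has in mind; the paper in fact states Lemma \ref{lm-NYalt} without proof because the computation is as routine as you carry it out. The base case $\zeta_\ell=b_\ell$, the two quotients of $\xi'_k=b_k(\xi'_{k-1}+\xi_k)$ by $\xi'_{k-1}$ and by $\xi_k$, the ratio identity $\xi_k/\xi'_{k-1}=\zratio_k/\zeta_{k-1}$, and the cancellation of $b_k$ in $b'_k=b_k\zratio_k/\zratio'_k$ are all correct and give precisely \eqref{z-burke1}.
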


Next the row insertion step with random input.

\begin{lemma}  Fix integers $1\le \ell<N$. Let $\alpha_{\ell+1}, \dotsc,  \alpha_N$, $\beta_\ell, \dotsc,  \beta_N$ be positive reals
that satisfy $\beta_k=\beta_\ell+\alpha_k$ for $\ell<k\le N$.
  Assume that  the random variables  $\{\zratio_k : \ell<k\le N\}
  \cup \{b_k: \ell\le k\le N\}$ are
   independent
with marginal distributions $\zratio_k\sim\Gamma^{-1}(\alpha_k)$  and
$b_k\sim\Gamma^{-1}(\beta_k)$.
Then the random variables  $\{\zratio'_k,b'_k: \ell<k\le N\}\cup\{\zeta_N\}$ are also  independent
with marginal distributions
 $\zratio'_k\sim\Gamma^{-1}(\alpha_k)$, $b'_k\sim\Gamma^{-1}(\beta_k)$, and
  $\zeta_N\sim\Gamma^{-1}(\beta_\ell)$.
\label{burkelm1}\end{lemma}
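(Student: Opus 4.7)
The plan is to proceed by induction on $k$ from $\ell$ up to $N$, exploiting the observation from Lemma \ref{lm-NYalt} that the $k$-th step of the recursion couples only the three variables $(\zratio_k,\zeta_{k-1},b_k)$ in producing the three outputs $(\zratio'_k,b'_k,\zeta_k)$, leaving every other $\zratio_j$ and $b_j$ untouched. The heart of the matter is therefore a local three-variable identity: if $\zratio\sim\Gamma^{-1}(\alpha)$, $\zeta\sim\Gamma^{-1}(\beta)$ and $b\sim\Gamma^{-1}(\alpha+\beta)$ are independent and one sets
\begin{equation*}
\zratio'=b(1+\zratio/\zeta),\qquad b'=(1/\zratio+1/\zeta)^{-1},\qquad \zeta'=b(1+\zeta/\zratio),
\end{equation*}
then $\zratio',b',\zeta'$ are again independent, with $\zratio'\sim\Gamma^{-1}(\alpha)$, $b'\sim\Gamma^{-1}(\alpha+\beta)$ and $\zeta'\sim\Gamma^{-1}(\beta)$.

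To establish this local identity I would pass to the reciprocal gamma variables $A=1/\zratio\sim\Gamma(\alpha)$, $E=1/\zeta\sim\Gamma(\beta)$ and $B=1/b\sim\Gamma(\alpha+\beta)$, which remain independent. Using $\zratio+\zeta=(A+E)/(AE)$ a short manipulation gives the clean formulas
\begin{equation*}
1/b'=A+E,\qquad 1/\zratio'=BA/(A+E),\qquad 1/\zeta'=BE/(A+E).
\end{equation*}
The classical gamma/beta conjugacy then makes $A+E\sim\Gamma(\alpha+\beta)$ independent of $R:=A/(A+E)\sim\mathrm{Beta}(\alpha,\beta)$; since $B$ is independent of $(A,E)$ by construction, the triple $(A+E,R,B)$ is mutually independent. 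Applying the reverse direction of the same conjugacy to the independent pair $(B,R)$ yields $BR\sim\Gamma(\alpha)$ and $B(1-R)\sim\Gamma(\beta)$ independent, and the mutual independence together with the asserted marginals of $(1/b',1/\zratio',1/\zeta')$ drop out.

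With the local identity in hand, the induction is routine. The base case $k=\ell$ is simply $\zeta_\ell=b_\ell\sim\Gamma^{-1}(\beta_\ell)$. For the inductive step, assume that $\{\zratio'_j,b'_j:\ell<j<k\}\cup\{\zeta_{k-1}\}$ are mutually independent with the claimed marginals, noting that each of these random variables is a measurable function of the previously consumed inputs $\{b_j,\zratio_j:j<k\}$. Since the fresh inputs $(\zratio_k,b_k)$ are independent of everything used so far, conditioning on the earlier outputs preserves the product law $\Gamma^{-1}(\alpha_k)\otimes\Gamma^{-1}(\beta_\ell)\otimes\Gamma^{-1}(\alpha_k+\beta_\ell)$ of $(\zratio_k,\zeta_{k-1},b_k)$, which is exactly the input required by the local identity. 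Invoking that identity simultaneously yields the correct joint distribution of $(\zratio'_k,b'_k,\zeta_k)$ and its independence from the earlier outputs; at $k=N$ this is the lemma. The genuine difficulty lies entirely in the local identity, where the miraculous cancellation $1/\zratio'+1/\zeta'=1/b$ together with the beta split of $A/(A+E)$ reveals the hidden product structure; everything else is bookkeeping that exploits the disjointness of inputs used at distinct steps of the recursion.
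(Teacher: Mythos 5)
Your proof is correct and follows the same inductive structure as the paper's argument: propagate the claim along the recursion of Lemma \ref{lm-NYalt}, using at each step that the fresh pair $(\zratio_k,b_k)$ is independent of the outputs already produced. The one substantive difference is that the paper's proof is essentially a sketch -- it merely says ``apply \eqref{z-burke1} to the triple $(\zeta_m,\zratio_{m+1},b_{m+1})$'' without actually verifying the resulting triple has the right joint law -- whereas you isolate and prove the local three-variable identity explicitly by passing to reciprocals and invoking the gamma/beta conjugacy in both directions ($A+E$ and $A/(A+E)$ independent, then $BR$ and $B(1-R)$ independent). Your formulas $1/b'=A+E$, $1/\zratio'=BA/(A+E)$, $1/\zeta'=BE/(A+E)$ are correct, and the bookkeeping of independence across induction steps is handled properly. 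This makes the proposal a more complete exposition of the same argument rather than a genuinely different route.
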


\begin{proof}
 From the assumptions and by definition,   $\zeta_\ell=b_\ell\sim\Gamma^{-1}(\beta_\ell)$ and
 this variable  is
independent  of $\{\zratio_{\ell+1}, \dotsc, $  $\zratio_N, b_{\ell+1}, \dotsc,  b_N\}$.
Use equations \eqref{z-burke1} to prove, inductively  on  $m=\ell+1,\dotsc, N$,
that random variables $\{\zratio'_{\ell+1}, \dotsc,  \zratio'_m, b'_{\ell+1}, \dotsc,  b'_m, \zeta_m\}$
are independent, independent of  $\{\zratio_{m+1}, \dotsc, $  $\zratio_N, b_{m+1}, \dotsc,  b_N\}$,
and their marginal distributions are
 $\zratio'_k\sim\Gamma^{-1}(\alpha_k)$, $b'_k\sim\Gamma^{-1}(\beta_k)$ and
 $\zeta_m\sim\Gamma^{-1}(\beta_\ell)$. An induction step is achieved by applying
 \eqref{z-burke1} to the triple $(\zeta_m, \zratio_{m+1}, b_{m+1})$ to produce the
new triple $(\zeta_{m+1}, \zratio'_{m+1}, b'_{m+1})$.
  Note that the parameter of $\zeta_m$ does not
 change with $m$.
 The case  $m=N$ gives the lemma.
\end{proof}

\begin{proof}[Proof of Theorem \ref{z-burkethm}]
(a) That $\zratio(n)$ is itself a Markov process follows from the fact that  from
\eqref{z-burke1} we can build autonomous equations for this evolution.

(b) It suffices to  show that the last claim  is preserved by a step of the
evolution.
Consider the time $n$ transition   from state
$\zratio=\zratio(n-1)$ to state $\zratio'=\zratio(n)$.
 The input weights are
$a_1=(a_{11},\dotsc, a_{N1})=d^{[n]}=(d_{n,1},\dotsc, d_{n,N})$ with $d_{n,k}\sim\Gamma^{-1}(\thetarc_{n,k})$, and this also defines the
first diagonal $a_1$ of the auxiliary array in Def.~\ref{NYdef-z}.  Assume that the variables
$\{\zratio_{k\ell} : 1\le\ell\le j,\, \ell<k\le N\}\cup
\{ \zarr_{N\ell}(m)/\zarr_{N\ell}(m-1): 1\le m\le n-1, \, 1\le\ell\le j\}$ are independent with
marginals $\zratio_{k\ell}\sim\Gamma^{-1}(\thetac_k-\thetac_\ell)$ and
$ \zarr_{N\ell}(m)/\zarr_{N\ell}(m-1)\sim\Gamma^{-1}(\thetar_m+\thetac_\ell)$.
Let $\zeta_{N\ell}= \zarr_{N\ell}(n)/\zarr_{N\ell}(n-1)$  denote ratios defined along the
transition process.

We prove the following statement inductively over $\ell=1,\dotsc, j$.
\be\begin{aligned}
&\text{The  variables $\{\zratio'_1,\dotsc,\zratio'_\ell, a_{\ell+1}, \zeta_{N1},\dotsc, \zeta_{N\ell}\}$
are independent and independent of}\\
&\text{$\{\zratio_{\ell+1}, \dotsc, \zratio_j\}\cup
\{ \zarr_{Ni}(m)/\zarr_{Ni}(m-1): 1\le m\le n-1, \, 1\le i\le j\}$,  \  and } \\
&\text{they have marginals $\zratio'_{ki}\sim\Gamma^{-1}(\thetac_k-\thetac_i)$,
$a_{k,\ell+1}\sim\Gamma^{-1}(\thetar_n + \thetac_k)$ and
$ \zeta_{Ni} \sim\Gamma^{-1}(\thetar_n +\thetac_i)$. }
\end{aligned}\label{ind1}\ee
The case $\ell=j$ completes the proof.

In the first row insertion step apply Lemma \ref{burkelm1} with
$\ell=1$ and inputs $\zratio_1=(\zratio_{2,1},\dotsc, \zratio_{N,1})$ and
$b=(a_{11},\dotsc, a_{N1})$.  Now $\alpha_k=\thetac_k-\thetac_1$ and
$\beta_k=\thetarc_{n,k} = \thetar_n + \thetac_k$.  According to the lemma,
the outputs $\zratio'_1=(\zratio'_{2,1},\dotsc, \zratio'_{N,1})$, $b'=a_2=(a_{22},\dotsc, a_{N2})$
 and  $\zeta_{N1}$ are independent
and they have the correct marginal distributions:
$\zratio'_{k,1}\sim\Gamma^{-1}(\thetac_k-\thetac_1)$,
$a_{k2}\sim\Gamma^{-1}(\thetar_n + \thetac_k)$  and  $ \zeta_{N1} \sim\Gamma^{-1}(\thetar_n +\thetac_1)$.
This gives \eqref{ind1} for $\ell=1$.

In the general step, assuming  \eqref{ind1} for $\ell-1$, Lemma \ref{burkelm1} is applied
to inputs $\zratio_{\ell}$ and
$a_{\ell}$,  with $\alpha_k=\thetac_k-\thetac_{\ell}$ and
$\beta_k=\thetarc_{n,k} = \thetar_n + \thetac_k$.
The outputs  $\zratio'_{\ell}$, $a_{\ell+1}$, $\zeta_{N,\ell}$ have the right
properties
and the  validity of \eqref{ind1} is exteded to $\ell$.
 \end{proof}

\section{Degenerations to known results}\label{scalinglimits}

We detail rescalings of the inverse-gamma polymer which recover known results.

\subsection{Directed last passage percolation and the Laguerre Unitary Ensemble}\label{Lag}

Fix a solvable parameter matrix $\thetarc=(\thetarc_{i,j}>0:  i\ge 1, 1\le j\le N)$ such that $\thetarc_{i,j} =  \thetar_i+ \thetac_j$. Consider a family (indexed by $\e>0$) of solvable inverse-gamma weight matrices $d^{\e} = (d_{i,j}^{\e}: i\ge 1, 1\le j\le N)$ such that the entries are  independent random variables and  $d_{i,j}^{\e}\sim \Gamma^{-1}(\e \thetarc_{i,j})$. Keeping track of the $\e$ write $z^{\e}_{k,\ell}(n)$ as the elements of $z^{\e}(n)$ -- the image of the weight matrix $(d_{i,j}^{\e}: 1\le i\le n, 1\le j\le N)$ under the geometric RSK correspondence. Write $F^{\e}(n) = (F^{\e}_{k,\ell}(n): 1\leq \ell\leq k\leq N)$ where $F^{\e}_{k,\ell}(n) = \e \log z^{\e}_{k,\ell}(n)$.

With respect to the same solvable parameter matrix consider a weight matrix $w=(w_{i,j}:i\ge 1, 1\le j\le N)$ such that the entries are independent random variables and $w_{i,j}\sim {\rm Exp}(\thetarc_{i,j})$ (an exponential random variable with rate $\thetarc_{i,j}$, or equivalently mean $(\thetarc_{i,j})^{-1}$). The classical RSK correspondence maps the weight matrix $w$ to a pair of Young tableaux $(P,Q)$ and is defined analogously but with the $(+,\times)$ algebra replaced by $(\max,+)$. We focus on the $P$-tableaux and writing it in terms of a Gelfand-Zetlin pattern (a triangular array with interlacing). Recall the notation for non-intersecting paths $\Pi_{n,k}^{\ell}$ from Section \ref{Kirillov corres}. The weight of an $\ell$-tuple $\pi$ of paths is now
\begin{equation}
\tilde{wt}(\pi) = \sum_{r=1}^{\ell} \sum_{(i,j)\in \pi_{r}} w_{i,j}.
\end{equation}
Define an array $L(n)=\{L_{k,l}(n):\ 1\le k\le N,\ 1\le l\le k\wedge n\}$ by
\begin{equation}
L_{k,1}(n)+\cdots + L_{k,\ell}(n) = \max_{\pi\in\Pi_{n,k}^{\ell}} \tilde{wt}(\pi).
\end{equation}
$L(n)$ is the Gelfand-Zetlin pattern version of the $P$-tableau of the image of $(w_{i,j}:1\le i\le n, 1\le j\le N)$ under the RSK correspondence. As in the case of the geometric RSK correspondence, when $n<N$ it is necessary to leave some entries of $L(n)$ undefined,  or
populate them with singular values (see Remark \ref{sing-rem}).

\begin{proposition}\label{limLPP}
The $n$-indexed process $\left(F^{\e}(n)\right)_{n\geq 0}$ converges in law, as $\epsilon\to 0$, to the $n$-indexed process $\left(L(n)\right)_{n\geq 0}$.
\end{proposition}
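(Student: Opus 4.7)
The plan is to use the \emph{tropical limit} relating geometric RSK to classical RSK: under the substitution $F = \epsilon \log z$, the $(+,\times)$-operations defining geometric RSK degenerate to the $(\max,+)$-operations of classical RSK as $\epsilon\to 0$. I would organize the proof in three steps: input convergence, uniform convergence of the tropicalized map, and an extended continuous mapping argument.

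First, I would establish the limit at the level of individual weights. A direct change of variables in \eqref{invgammadensity} shows that if $X_\epsilon \sim \Gamma^{-1}(\epsilon\gamma)$, then $U_\epsilon := \epsilon \log X_\epsilon$ has density
\begin{equation*}
f_{U_\epsilon}(u) = \frac{1}{\epsilon\,\Gamma(\epsilon\gamma)}\, e^{-\gamma u}\, \exp\bigl(-e^{-u/\epsilon}\bigr).
\end{equation*}
Since $\epsilon\,\Gamma(\epsilon\gamma) = \Gamma(1+\epsilon\gamma)/\gamma \to 1/\gamma$ and $\exp(-e^{-u/\epsilon}) \to \mathbf{1}_{u>0}$ pointwise, $f_{U_\epsilon}$ converges pointwise (and hence in $L^1$, by Scheff\'e) to the density $\gamma e^{-\gamma u}\mathbf{1}_{u>0}$ of $\mathrm{Exp}(\gamma)$. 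Independence of the entries of $d^\epsilon$ then yields the joint convergence $(\epsilon\log d^\epsilon_{ij})_{i,j} \Rightarrow (w_{ij})_{i,j}$.

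Second, I would invoke the non-intersecting lattice path representation \eqref{tau}--\eqref{ztau-3}. Each product $z^\epsilon_{k,1}(n)\cdots z^\epsilon_{k,\ell}(n)$ equals a finite sum $\sum_{\pi\in\Pi^\ell_{n,k}} \prod_r \prod_{(i,j)\in\pi_r} d^\epsilon_{ij}$. Setting $W_{ij} = \epsilon \log d^\epsilon_{ij}$ and applying $\epsilon \log$ to the path sum, the elementary two-sided estimate
\begin{equation*}
\max_{1\le i\le m} a_i \;\le\; \epsilon \log \sum_{i=1}^m e^{a_i/\epsilon} \;\le\; \max_{1\le i\le m} a_i + \epsilon \log m
\end{equation*}
gives uniform convergence on compact sets of the tropicalized sum to $\max_i a_i$. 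It follows that the map $(W_{ij}) \mapsto F^\epsilon_{k,\ell}(n) = \epsilon\log z^\epsilon_{k,\ell}(n)$ converges uniformly on compacts to the continuous classical map $(w_{ij}) \mapsto L_{k,\ell}(n) = \max_{\pi\in\Pi^\ell_{n,k}} \widetilde{wt}(\pi) - \max_{\pi\in\Pi^{\ell-1}_{n,k}} \widetilde{wt}(\pi)$.

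Finally, combining Steps 1 and 2, the extended continuous mapping theorem (for sequences of maps converging uniformly on compacts to a continuous map, applied to inputs restricted to the first $n_0$ rows) yields weak convergence of all finite-dimensional marginals $(F^\epsilon(1),\ldots,F^\epsilon(n_0)) \Rightarrow (L(1),\ldots,L(n_0))$. Since the processes are indexed by discrete time, this is equivalent to the asserted convergence in law of the $n$-indexed process. The case $n < N$, in which certain entries of $z(n)$ and $L(n)$ are undefined, is handled by restricting to the index set $\{(k,\ell) : 1 \le \ell \le k \wedge n\}$ on which both sides are well-defined and the argument goes through verbatim. I do not anticipate a serious obstacle: the path representation reduces everything to a finite tropical limit, and the only mildly delicate point is upgrading pointwise to uniform convergence in Step 2, which the two-sided sandwich above makes transparent.
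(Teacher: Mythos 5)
Your proof takes the same route as the paper's: establish convergence in distribution $\e\log d^\e_{ij} \Rightarrow w_{ij}$ (and hence joint convergence by independence), use the non-intersecting path representation of the $\tau$-variables, observe that $\e\log\sum_i e^{x_i/\e}$ converges uniformly on compacts to $\max_i x_i$, and conclude by a continuous-mapping argument. Your elaborations (the Scheff\'e justification for Step 1 and the explicit two-sided sandwich for the tropical limit) fill in details the paper leaves to the reader, but the structure of the argument is identical.
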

\begin{proof}
This hinges on two observations. The first is that as $\e$ goes to zero, $\e \log d_{i,j}^{\e}$ converges in distribution to $w_{i,j}$. Hence by independence the whole array
$\{\e \log d_{i,j}^{\e}\}$ converges in distribution to $w$ and (by the continuous mapping theorem) the random vectors
\begin{equation}\label{pivec}
\left(\sum_{r=1}^{\ell} \sum_{(i,j)\in \pi_r} \e \log d_{i,j}^{\e}\right)_{\pi\in\Pi^{\ell}_{n,k}, 1\leq \ell\leq k\leq N} \Longrightarrow
\left(\sum_{r=1}^{\ell} \sum_{(i,j)\in \pi_r} w_{i,j}\right)_{\pi\in\Pi^{\ell}_{n,k}, 1\leq \ell\leq k\leq N}.
\end{equation}
The second observation is that on $\R^m$, the function $f_{\e}(x) = \e \log \sum_{i=1}^{m} e^{x_i/\e}$ converges uniformly, as $\e\to 0$, to the function $f_0(x) = \max(x_i: 1\leq i\leq m)$. The process $\left(F^{\e}(n)\right)_{n\geq 0}$ is formed by applying an array of functions of the type $f_{\e}$ to the elements of the vector on the left of (\ref{pivec}). Combining the uniform convergence of functions of this type with the convergence in distribution in (\ref{pivec}), the claimed convergence of the processes follows.
\end{proof}

It is worth noting that in the above limit we only recover the  directed last passage percolation model with exponentially distributed weights, and not the  model
with geometric distribution studied in \cite{KJ}. Since our log-gamma distributions are continuous, it does not seem possible to recover the discrete geometric distribution (but rather just their continuous exponential counterparts).

Let us now briefly recall the connections between bottom row of $L(n)$ and the eigenvalues of the Laguerre Unitary Ensemble (LUE). Consider an array $(A_{i,j}:1\leq i\leq N, j\geq 1)$ of independent complex zero-mean Gaussian distributed random variables with variance $(\thetarc_{j,i})^{-1}$. We have changed the order of $i$ and $j$ since we now set $A(n)= (A_{i,j}:1\leq i\leq N, 1\leq j\leq n)$ and treat $A(n)$ as a matrix with (row,col) notation. Set $M(n) = A(n) A(n)^*$ the $N\times N$ generalized Wishart random matrix, and define for each $n$, a vector of ordered (largest to smallest) eigenvalues of $M(n)$: $\lambda(n) = (\lambda_1(n),\ldots, \lambda_N(n))$.

When $\thetarc_{i,j}=1$ for all $i,j$, Johansson \cite{KJ} showed that for fixed $n$, $\lambda_1(n) \stackrel{(d)}{=}L_{N,1}(n)$. This was strengthened by \cite{Doumerc} to show that for the same parameters as in \cite{KJ} and for $n\geq N$ fixed, the vector $\left(\lambda_i(n)\right)_{i=1}^{N} \stackrel{(d)}{=}\left(L_{N,i}(n)\right)_{i=1}^{N}$. In \cite{Defosseux, ForresterRains} equality in law was shown for the processes $\left(\lambda_1(n)\right)_{n\geq 1}$ and $\left(L_{N,1}(n)\right)_{n\geq 1}$.

Turning to the general case for the parameters $\thetarc_{i,j}$, \cite{BP07} proved that for $n$ fixed, $\lambda_1(n) \stackrel{(d)}{=}L_{N,1}(n)$ and conjectured equality of the corresponding processes in $n$. This was then proved in \cite{DW08}, whose methods (combining Theorem 3.1 and Lemma 4.1) show equality in law of the process $\left(\lambda(n)\right)_{n\geq 1}$ and $\left(L_N(n)\right)_{n\geq 1}$ where $L_N(n)= \left(L_{N,i}(n)\right)_{i=1}^{N}$ and where only non-zero eigenvalues/non-singular entries of $L$ are considered.

Combining Proposition \ref{limLPP} with the above discussion one sees that the logarithm of the bottom row of the image of a solvable weight matrix under the geometric RSK correspondence are analogous to the eigenvalues of the LUE ensemble. This connection can be seen from the integral formulas we have derived in (i) of Theorem \ref{main corollary}. We perform point-wise asymptotics to demonstrate this connection we have proved above. Performing the change of variables to that formula given by $y_i\mapsto e^{\e^{-1} x_i}$, $\theta\mapsto \e \theta$ and $\lambda\mapsto \e \lambda$ we find that the measure for $(F^{\e}_{N,\ell}(n))_{\ell=1}^{N} = (x_1,\ldots, x_N)$ is given by

$$\prod_{i=1}^N \e^{-1} dx_i \wf{N}{\e\thetacv}(e^{\e^{-1}x})
   \int_{\iota \mathbb{R}^N} \e^{N} d{\lambda} \, \wf{N}{\e(-\lambda)}(e^{\e^{-1}x})
  \prod_{m=1}^n \prod_{i=1}^N\frac{\Gamma(\e(\hat\theta_m+\lambda_i))}{\Gamma(\e(\theta_i+\hat\theta_m))}s_N(\e\lambda).$$

We may now evaluate the $\e\to 0$ asymptotics: For the Gamma functions we employ the expansion near zero; for the measure $s_N$ we employ the Euler Gamma reflection formula; for the Whittaker functions we can write the Whittaker functions in additive variables (and perform a sign change) $\psi^{N}_{\lambda}(x)  =\wf{N}{-\lambda}(e^{x})$, and then use the fact (see for instance \cite{KL,OCon}) to see 
$$\lim_{\e\to 0} \e^{\frac{N^2-N}{2}} \psi^{N}_{\e\lambda}(\e^{-1}x) = \tilde{\psi}_{\lambda}^{N}(x)  = \tilde{\psi}^{N}_{\lambda_1,\ldots, \lambda_N}(x) = (-1)^{\tfrac{N^2-N}{2}} \frac{ \det(e^{\lambda_i x_j})_{i,j=1}^{N}}{h(\lambda)}$$
where $h(\lambda) = \prod_{1\leq i<j\leq N} (\lambda_j-\lambda_i)$ is the Vandermonde determinant.

The expansion of the Gamma function near zero shows that as $\e$ goes to zero,
$$\prod_{m=1}^n \prod_{i=1}^N\frac{\Gamma(\e(\hat\theta_m+\lambda_i))}{\Gamma(\e(\theta_i+\hat\theta_m))} \to \prod_{m=1}^n \prod_{i=1}^N\frac{\theta_i+\hat\theta_m}{\hat\theta_m+\lambda_i}.$$

Likewise, the Euler Gamma reflection formula and the fact that $\sin (\pi x) / (\pi x)\to 1$ as $x\to 0$ yields
$$\e^{-N^2+N}s_N(\e\lambda) \to \frac{1}{(2\pi \iota)^N N!} (-1)^{\tfrac{N^2-N}{2}}  h(\lambda)^2,$$

where $h(\lambda) = \prod_{1\leq i<j\leq N} (\lambda_j-\lambda_i)$ is the Vandermonde determinant.

Putting these asymptotics together we have that the limit of our measure is given by

$$\prod_{i=1}^N dx_i \frac{\det(e^{-\theta_i x_j})_{i,j=1}^N}{h(\theta)} \frac{1}{(2\pi i)^N N!} \int_{\iota \R^N} d\lambda \det(e^{\lambda_i x_j})_{i,j=1}^{N} h(\lambda)\prod_{m=1}^n \prod_{i=1}^N \frac{\theta_i+\hat\theta_m}{\hat\theta_m+\lambda_i}.$$

For simplicity let us assume that $N=n$. Then this can be rewritten as

$$\frac{1}{Z_{N,N}}  \det(e^{-\theta_i x_j})_{i,j=1}^N D(x,\hat{\theta}) \prod_{i=1}^N dx_i$$
where we have
$$D(x,\hat{\theta})= \frac{1}{(2\pi i )^NN!}\int d\lambda \det(e^{\lambda_i x_j})_{i,j=1}^{N}  h(\hat{\theta}) h(\lambda) \prod_{m=1}^N \prod_{i=1}^N\frac{1}{\hat\theta_m+\lambda_i},$$
where the integrals are along lines parallel to the imaginary axis and to the right of the poles,
and where
$$Z_{N,N} = \frac{h(\theta)h(\hat{\theta})}{\prod_{i,m=1}^{N} (\theta_i+\hat{\theta}_m)} = \det(\frac{1}{\theta_i + \hat{\theta}_j})_{i,j=1}^{N}.$$

An application of the Residue Theorem provides that
$$D(x,\hat{\theta}) = \det(e^{-\hat{\theta}_i x_j})_{i,j=1}^{N} \prod_{i=1}^{N} {\bf 1}_{x_i\geq 0},$$
and hence our measure is simply
$$\frac{1}{Z_{N,N}}  \det(e^{-\theta_i x_j})_{i,j=1}^N \det(e^{-\hat{\theta}_i x_j})_{i,j=1}^{N} \prod_{i=1}^N dx_i {\bf 1}_{x_i\geq 0}$$
which coincides exactly with the formula given directly for last passage percolation and for the generalized Wishart ensemble in \cite{BP07}.

\subsection{Semi-discrete directed polymer in a Brownian environment}\label{semi-disc}

We will now indicate how an appropriate scaling of the solvable parameter matrix $(\gamma_{i,j})$ can be used to recover the
semi-discrete directed polymer in a Brownian environment studied in \cite{OConYor,MO,SV,OCon}, as a scaling limit of $z_{N1}(n)$.
In particular, we will consider the weight matrix $(d_{ij})$ with solvable parameter matrix $\gamma_{ij}=n$ and we will let $n$ tend to infinity.
We would first need some facts about the digamma and the trigamma functions, which are defined as
\be
\Psi_0(\theta)=\frac{d}{d\theta} \log \Gamma(\theta)\quad \text{and} \quad
\Psi_1(\theta)=\frac{d^2}{d\theta^2} \log \Gamma(\theta).
 \label{digam}\ee
In particular, we have that for $\theta$ large
\begin{equation}\label{gamma fact}
\Psi_0(\theta)=\log\theta-\frac{1}{2\theta}+o(\frac{1}{\theta}),\qquad \Psi_1(\theta)=\frac{1}{\theta}+o(\frac{1}{\theta}).
\end{equation}
We also notice that for an inverse Gamma random variable $d$ with parameter $\theta$ it holds that
\begin{eqnarray*}
\Psi_0(\theta)=-E[\log d] \quad \text{and} \quad \Psi_1(\theta) =\text{Var}(\log d).
\end{eqnarray*}
We write $\log  z_{N1}(n)$ in terms of the weights $d_{ij}$ (iid inverse Gamma random variables with parameter $n$) as
\begin{eqnarray*}
\log z_{N1}(n)&=&\log \sum_{1\leq i_1\cdots\leq i_N=n} \exp\left[ \sum_{j=1}^N\sum_{i_{j-1}\leq i\leq i_j}\log d_{ij}\right]\\
&=&
\log n^{-(N-1)} \sum_{1\leq i_1\cdots\leq i_N=n} \exp\left[ \sqrt{n\Psi_1(n)}\sum_{j=1}^N\frac{1}{\sqrt{n}}\sum_{i_{j-1}\leq i\leq i_j}\frac{\log d_{ij}+\Psi_0(n)}{\sqrt{\Psi_1(n)}}\right]\\
&&+(N-1)\log n -(n+N-1)\Psi_0(n).
\end{eqnarray*}
Using \eqref{gamma fact} we have that $n\Psi_1(n)\to 1$ and $(N-1)\log n -(n+N-1)\Psi_0(n) = -n\log n +\frac{1}{2} +o(1)$, as $n$ tends to infinity. It is now an easy consequence of Donsker's invariance principle and the Riemann  integral definition that
\begin{eqnarray*}
\log \big(n^n z_{N1}(n)\big)-\frac{1}{2}\Longrightarrow \log \int\cdots\int_{0\leq t_1\leq\cdots \leq t_{n-1}\leq 1}e^{\sum_{i=1}^N (B^i(t_i)-B^i(t_{i-1}))}dt_1\cdots dt_{n-1},
\end{eqnarray*}
for $n$ tending to infinity, with $B^i(\cdot)$ independent, standard Brownian motions. This is the directed polymer model studied in
\cite{OConYor,MO,SV,OCon}. Along the same lines it follows that the whole pattern $(z_{k\ell}(n))$ obeys similar scaling limits.

\section{Proof of main results}\label{proofsec}

\subsection{Proof of Proposition \ref{itpr1}}\label{itpr1proofsec}
We prove the intertwining relation   (\ref{itQKKPI})  in two steps. We use induction on $N$  to show that \eqref{itPKKPI} is valid. The inductive step will also reveal an inductive property of the eigenfunctions $\wfunc{N}{\thetacv}$.

\vskip 2mm
The  case $N=1$ in \eqref{itPKKPI}  is immediate since
$\Piker{1}{\thetarc^{[n]}}= \Pker{1}{\thetarc^{[n]}}$
and   $\Kker{1}{\thetac_1}$ is the identity operator.

A supporting  step of the proof is an intertwining that involves only two rows of the
array $\zarr$.
For this purpose introduce  two further kernels for $2\le k\le N\le n$: a time $n$ kernel $\Rker{k}{\thetarc^{[n]}}$ on $\SN{k-1}\times \SN{k}$ by
\be
\Rker{k}{\thetarc^{[n]}}((z^{k-1},z^k), d\tilde z^{k-1}, d\tilde z^k)
=  \Pker{k-1}{\thetarc^{[n]}}(z^{k-1}, d\tilde z^{k-1})\,
\Lker{k}{\thetarc_{n,k}}((z^{k-1},z^k; \tilde z^{k-1}), d\tilde z^k)
\ee
and
   a time-homogeneous kernel from $\SN{k}$ to $\SN{k-1}$ by
\begin{equation}
\Laker{k}{\thetacv}(y,dx)= \biggl\{\,\prod_{\ell=1}^{k-1}
\biggl(\frac{x_\ell}{y_\ell}\biggr)^{\thetac_k-\thetac_\ell} \biggr\}
 \exp\biggl[-\sum_{\ell=1}^{k-1} \left(\frac{x_\ell}{y_\ell} + \frac{y_{\ell+1}}{x_\ell}\right)\biggr]\,\prod_{\ell=1}^{k-1} \frac{dx_\ell}{x_\ell}.
\label{Laker}\end{equation}

\begin{lemma}\label{lm:2l-int}  At every time $n\ge 1$ and
for all $y\in \SN{N}$,  we have the following equality of
  measures on  $\SN{N-1}\times \SN{N}$, in terms  integration variables $(dz^{N-1}, dz^N)$:
\be \Pker{N}{\thetarc^{[n]}}(y, dz^N)\,\Laker{N}{\thetacv}(z^N, dz^{N-1})
 = \int_{\hat x\in\R_+^{N-1}} \Laker{N}{\thetacv}(y, d\hat x) \,
\Rker{N}{\thetarc^{[n]}}((\hat x,y), dz^{N-1}, d z^N). \label{2l-int} \ee
\end{lemma}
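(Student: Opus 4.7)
The plan is to reduce the claim to an equality of densities on $\SN{N-1}\times\SN{N}$, since both sides define measures of the same total dimension. The LHS $\Pker{N}{\thetarc^{[n]}}(y,dz^N)\Laker{N}{\thetacv}(z^N,dz^{N-1})$ is already an explicit density against the reference measure $\prod_{j=1}^N(dz^N_j/z^N_j)\cdot\prod_{\ell=1}^{N-1}(dz^{N-1}_\ell/z^{N-1}_\ell)$. On the RHS, the kernel $\Lker{N}{\thetarc_{n,N}}$ carries an explicit density only in the first coordinate $z^N_1$; the remaining coordinates $z^N_2,\dots,z^N_N$ are determined deterministically by $(\hat x,y,z^{N-1})$ via the formulas in \eqref{Lker}. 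I would therefore invert those constraints to solve for $\hat x$ in terms of $(y,z^{N-1},z^N)$, change variables from the $\hat x$-integral to an integral over the variables $(z^N_\ell)_{\ell\ge 2}$, and then compare the two densities pointwise.

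First I would invert the $L$-kernel constraints. Adopting the convention $z^{N-1}_N:=0$, the formula for $2\le\ell\le N-1$ and the $\ell=N$ formula in \eqref{Lker} collapse into the single expression
\[
\hat x_\ell=\frac{y_\ell\,z^{N-1}_\ell}{z^N_{\ell+1}}\cdot\frac{y_{\ell+1}+z^{N-1}_{\ell+1}}{y_\ell+z^{N-1}_\ell},\qquad 1\le\ell\le N-1.
\]
Since $\hat x_\ell$ depends on $(z^N_2,\dots,z^N_N)$ only through a factor of $1/z^N_{\ell+1}$, the Jacobian matrix is diagonal, and one reads off the clean logarithmic identity $\prod_{\ell=1}^{N-1}(d\hat x_\ell/\hat x_\ell)=\prod_{j=2}^N(dz^N_j/z^N_j)$. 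The RHS therefore becomes a density against the same reference measure as the LHS, with $\hat x$ substituted by the explicit function of $(y,z^{N-1},z^N)$ above.

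The remaining task is pointwise identification of the two densities. The key simplification comes from introducing $a_\ell:=y_\ell+z^{N-1}_\ell$ for $\ell<N$ and $a_N:=y_N$. The exponentials then match as follows. From the RHS, pairing the term $\hat x_\ell/y_\ell$ coming from $\Laker{N}{\thetacv}(y,d\hat x)$ with the term $\hat x_\ell/z^{N-1}_\ell$ coming from $\Pker{N-1}{\thetarc^{[n]}}(\hat x,dz^{N-1})$ gives $\hat x_\ell\cdot a_\ell/(y_\ell z^{N-1}_\ell)=a_{\ell+1}/z^N_{\ell+1}$; summed over $\ell$ and combined with the $a_1/z^N_1$ coming from the $L^N$-density in $z^N_1$, this reproduces $\sum_{\ell=1}^N a_\ell/z^N_\ell$. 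Similarly, pairing $y_{\ell+1}/\hat x_\ell$ from $\Laker{N}{\thetacv}$ with $z^{N-1}_{\ell+1}/\hat x_\ell$ from $\Pker{N-1}{\thetarc^{[n]}}$ (where the $\ell=N-1$ boundary term $y_N/\hat x_{N-1}$ is absorbed by $z^{N-1}_N=0$) gives $a_{\ell+1}/\hat x_\ell=z^N_{\ell+1}\,a_\ell/(y_\ell z^{N-1}_\ell)$, which reproduces the second family of sums. On the LHS these two families emerge from the elementary rearrangements $y_\ell/z^N_\ell+z^{N-1}_\ell/z^N_\ell=a_\ell/z^N_\ell$ and $z^N_{\ell+1}/y_\ell+z^N_{\ell+1}/z^{N-1}_\ell=z^N_{\ell+1}a_\ell/(y_\ell z^{N-1}_\ell)$. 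For the power factors, the solvable decomposition $\thetarc_{n,j}=\thetar_n+\thetac_j$ splits every $\gamma$-exponent into a $\thetar_n$-part and a $\thetac_j$-part. The $\thetar_n$-parts telescope via $\prod_{j=1}^{N-1}(\hat x_j/z^{N-1}_j)=(y_N/a_1)\prod_{j=1}^{N-1}(y_j/z^N_{j+1})$ to give $\prod_{j=1}^N(y_j/z^N_j)^{\thetar_n}$ on both sides; the $\thetac_k$-parts for $k<N$ match index by index via $(\hat x_k/y_k)^{-\thetac_k}(\hat x_k/z^{N-1}_k)^{\thetac_k}=(y_k/z^{N-1}_k)^{\thetac_k}$; and the $\thetac_N$-part matches via the same telescoping identity. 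The $\Gamma$-normalizations agree trivially.

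The main obstacle is the bookkeeping: the change of variables is highly nonlinear and four separate factors ($\Laker{N}{\thetacv}(y,d\hat x)$, $\Pker{N-1}{\thetarc^{[n]}}$, the $L^N$-density in $z^N_1$, and the Jacobian) contribute exponentials and powers that must all match pointwise. The algebraic keys enabling the match are the boundary convention $z^{N-1}_N=0$, which unifies the $\ell=N$ case of the $L$-kernel with the interior cases, and the solvability hypothesis $\thetarc_{n,j}=\thetar_n+\thetac_j$, which is what allows the $\thetar_n$-exponents from $\Pker{N-1}{\thetarc^{[n]}}$ and from the $L^N$-density to telescope into the $\thetar_n$-exponent of $\Pker{N}{\thetarc^{[n]}}$, while the $\thetac$-parts combine cleanly with $\Laker{N}{\thetacv}$. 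Once these substitutions are in place the verification is mechanical.
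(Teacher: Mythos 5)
Your proposal is correct and follows essentially the same route as the paper: both invert the deterministic constraints in $\Lker{N}{\thetarc_{n,N}}$ to change variables from $\hat x$ to $(z^N_2,\dots,z^N_N)$, observe that the Jacobian gives $\prod_\ell d\hat x_\ell/\hat x_\ell = \prod_{j\ge 2}dz^N_j/z^N_j$, and then match the two densities pointwise using the solvable decomposition $\thetarc_{n,j}=\thetar_n+\thetac_j$. The algebraic identities you isolate (the $a_\ell=y_\ell+z^{N-1}_\ell$ substitution, the telescoping of the $\thetar_n$-exponents, the index-by-index cancellation of the $\thetac_k$-exponents) are exactly what makes the paper's ``matching up all the powers'' work, so your write-up is a somewhat more explicit presentation of the same computation.
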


With \eqref{Laker} we can give  this alternative representation to the
  intertwining kernel  \eqref{Kker}
 from $\SN{N}$ to $\TN$:
\be \begin{aligned}
   \Kker{N}{\thetacv}(y,dz) &=    \delta_{y}(dz^N)
  \prod_{j=0}^{N-2} \Laker{N-j}{\thetacv}(z^{N-j}, dz^{N-j-1}).
 \label{Kker2}\end{aligned}
\ee
Postpone the proof of  Lemma \ref{lm:2l-int}  for a moment.
 With \eqref{Kker2} and  Lemma \ref{lm:2l-int}  we can   complete the proof of Proposition
\ref{itpr1} by checking the  induction step.  Assume \eqref{itPKKPI}  for $N-1$.
\begin{align*}
&\int_{\tilde y\in\R_+^N} \Pker{N}{\thetarc^{[n]}}(y, d\tilde y)\, \Kker{N}{\thetacv}(\tilde y, dz^{1,N})
=
\Pker{N}{\thetarc^{[n]}}(y, dz^N)
\prod_{j=0}^{N-2} \Laker{N-j}{\thetacv}(z^{N-j}, dz^{N-j-1})
\intertext{by Lemma \ref{lm:2l-int}}
&\qquad =\;\int_{\hat x\in\R_+^{N-1}} \Laker{N}{\thetacv}(y, d\hat x)
\,\Rker{N}{\thetarc^{[n]}}((\hat x, y), dz^{N-1}, dz^N)
\prod_{j=1}^{N-2} \Laker{N-j}{\thetacv}(z^{N-j}, dz^{N-j-1})
\intertext{by definition of $\Rker{N}{\thetarc^{[n]}}$}
&\qquad =\;\int_{\hat x\in\R_+^{N-1}} \Laker{N}{\thetacv}(y, d\hat x)
\,   \Pker{N-1}{\thetarc^{[n]}}(\hat x, dz^{N-1})
\prod_{j=1}^{N-1} \Laker{N-j}{\thetacv}(z^{N-j}, dz^{N-j-1})\\
&\qquad\qquad\qquad\qquad
 \times \Lker{N}{\thetarc_{n,N}}((\hat x, y; z^{N-1}), dz^N)
\intertext{by definition of $\Kker{N-1}{\thetac_{1:N-1}}$ and the induction assumption}
&\qquad =\;\int_{\hat x\in\R_+^{N-1}} \Laker{N}{\thetacv}(y, d\hat x)
\,  \int_{\tilde z^{1,N-1}\in T_{N-1}} \Kker{N-1}{\thetacv}(\hat x, d\tilde z^{1,N-1})
\, \Piker{N-1}{\thetarc^{[n]}}(\tilde z^{1,N-1}, dz^{1,N-1}) \\
&\qquad\qquad\qquad\qquad
 \times \Lker{N}{\thetarc_{n,N}}((\hat x, y; z^{N-1}), dz^N)
\intertext{by noting that $\tilde z^{N-1}=\hat x$  under
$\Kker{N-1}{\thetac_{1:N-1}}(\hat x, d\tilde z^{1,N-1})$, and by
definition of $\Piker{N}{\thetarc^{[n]}}$}
&\qquad =\;\int_{\hat x\in\R_+^{N-1}} \Laker{N}{\thetacv}(y, d\hat x)
\, \int_{\tilde z^{1,N-1}\in T_{N-1}} \Kker{N-1}{\thetacv}(\hat x, d\tilde z^{1,N-1})
\, \Piker{N}{\thetarc^{[n]}}((\tilde z^{1,N-1},y) , dz^{1,N})\\
&\qquad =\;
 \int_{\tilde z^{1,N}\in T_{N}} \Kker{N}{\thetacv}(y, d\tilde z^{1,N})
\, \Piker{N}{\thetarc^{[n]}}(\tilde z^{1,N} , dz^{1,N}).
\end{align*}
This checks \eqref{itPKKPI}  for $N$.

\begin{proof}[Proof of Lemma \ref{lm:2l-int}]
 Take a test function $g$ on $\SN{N-1}\times \SN{N}$, and collect and rearrange all the factors
in the integral against the kernel on   the right-hand side of \eqref{2l-int}:
\begin{align*}
&\int_{\R_+^{N-1}} \Laker{N}{\thetacv}(y, d\hat x) \,
\int_{\R_+^{N-1}\times\R_+^N}\Rker{N}{\thetarc^{[n]}}((\hat x,y), d\tilde x, d\tilde y)
\,g(\tilde x,\tilde y)\\
&= \biggl\{ \,\prod_{j=1}^{N}   \Gamma(\thetarc_{n,j})^{-1}\biggr\}\int_{\R_+^{N-1}\times\R_+} d\tilde x\; d\tilde y_1 \int_{\R_+^{N-1}} d\hat x
  \\
&\qquad\times
\exp\biggl[ -\sum_{\ell=1}^{N-1} \biggl(\frac{\hat x_\ell}{y_\ell} + \frac{y_{\ell+1}}{\hat x_\ell}
+  \frac{\hat x_\ell}{\tilde x_\ell}\biggr)
-\sum_{\ell=1}^{N-2} \frac{\tilde x_{\ell+1}}{\hat x_\ell} -\frac{y_1+\tilde x_1}{\tilde y_1}\,  \biggr]\\
&\qquad\times ({y_1+\tilde x_1})^{\thetarc_{n,N}} \, \tilde y_1^{\,-\thetarc_{n,N}-1}
\; \cdot\; \prod_{i=1}^{N-1}
\biggl(\frac{\hat x_i}{y_i}\biggr)^{\thetac_N-\thetac_i}
\;\cdot\; \prod_{j=1}^{N-1} \bigl(  \hat x_j^{\,\thetarc_{n,j}-1}  \tilde x_j^{\,-\thetarc_{n,j}-1} \bigr) \\
 &\qquad\qquad \times g\biggl(\tilde x\,, \,  \tilde y_1\,,\,
\biggl\{ \frac{y_{ \ell-1}\tilde x_{\ell-1}}{\hat x_{\ell-1}}\cdot
   \frac{y_{ \ell}+\tilde x_{\ell}}{y_{ \ell-1}+\tilde x_{\ell-1}} \biggr\}_{2\le\ell\le N-1} \,,\,
 \frac{y_{N}y_{N-1}\tilde x_{N-1}}{\hat x_{N-1}(y_{N-1}+\tilde x_{N-1})}   \,  \biggr) .
\end{align*}
Change variables in the inner integral from $\hat x$ to $\tilde y_{2,N}$ so that
the $g$-integrand becomes simply $g(\tilde x, \tilde y)$.
Recall that $\thetarc_{n,j}=\thetac_j+\thetar_n$.   After matching
up all the powers of the variables the integral above acquires the form
\begin{align*}
& \biggl\{ \,\prod_{j=1}^{N}   \Gamma(\thetarc_{n,j})^{-1}\biggr\}
\int_{\R_+^{N-1}\times\R_+^N} d\tilde x\; d\tilde y  \,
\exp\biggl[ -\sum_{\ell=1}^{N-1} \biggl(\frac{\tilde y_{\ell+1}}{y_\ell}
+ \frac{\tilde y_{\ell+1}}{\tilde x_\ell}
+  \frac{\tilde x_\ell}{\tilde y_\ell}\biggr)
-\sum_{\ell=1}^{N} \frac{y_\ell}{\tilde y_\ell}\,  \biggr]
\\
&\qquad\qquad\times \;\prod_{j=1}^{N-1}  \tilde x_j^{\,\thetac_N-\thetac_{j}-1}  \cdot
\prod_{j=1}^{N} \bigl(  y_j^{\,\thetarc_{n,j}} \; \tilde y_j^{\,-\thetarc_{n,N}-1} \bigr)
\cdot  g(\tilde x, \tilde y).
 \end{align*}
That this agrees with the integral coming from the left-hand side of \eqref{2l-int}
is just a matter of substituting in the explicit formulas  of the kernels.
The proof of Lemma \ref{lm:2l-int} is complete.
\end{proof}

\subsection{Proof of Theorem \ref{markovprojthm}}\label{markovprojthmproofsec}

Parts (i) and (ii) follow immediately from consequences (ii) and (i) (respectively) of Proposition \ref{MarkovfunctionsProp} when applied to $\Pbarker{N}{\thetarc^{[n]}}$, $\Kbarker{N}{\thetacv}$, and $\Piker{N}{\thetarc^{[n]}}$. In order to apply Proposition \ref{MarkovfunctionsProp} we must check two conditions. Condition (1) follows from Corollary \ref{itpr2} and condition (2) follows from the definition (\ref{Kbarker}) of $\Kbarker{N}{\thetacv}$.

Part (iii) follows from \eqref{Thm rel 2} setting $f(z)=\prod_{i=1}^N x_i(z)^{-\lambda_i}$ via the use of relations \eqref{Whittaker 1}, \eqref{Whittaker 2}, \eqref{Whittaker 3}, \eqref{Whittaker 4}:
\begin{eqnarray*}
\frac{1}{\prod_{i=1}^N y_i^{-\theta_i} w_{\thetacv}^N(y)} \int_{\TN}\prod_{i=1}^N x_i(z)^{-\lambda_i} M_{\thetacv}^N(y,dz)
&=&\frac{1}{\Psi^N_{\thetacv}(y)} \int_{\TN} \prod_{i=1}^N x_i(z)^{-\theta_i-\lambda_i} M^N(y,dz)\\
&=&\frac{\Psi^N_{\thetacv+\lambda}(y)}{\Psi^N_{\thetacv}(y)}
\end{eqnarray*}
In fact, if the $\lambda_i$ have non-zero real part then $f$ is not bounded. However if we set $f_n(x)=f(x){\bf 1}_{n^{-1}\leq x\leq n}$ then the claimed formula follows from dominated convergence and the fact that  $\int_{\TN} \Kbarker{N}{\thetacv}(y,dz)\, |f(z)|$ is bounded.

Part (iv), equation (\ref{Thm rel 3}) follows by integrating equation \eqref{Thm rel 2} over $\mu_n^N(y^0,dy)$ or using the intertwining relation (\ref{itPKKPI}) directly, as discussed in Remark~\ref{spec-remark}.

To deduce \eqref{int_mu} we  use the Plancherel formula \eqref{Planch}.
Let $\mu_n^N(y^0,y)$ denote the density of
  $\mu_n^N(y^0,dy)$. It exists because    $\mu_n^N(y^0,dy)$ comes from composing
  kernels with densities.
Multiply both sides of \eqref{Thm rel 3} by
$\int_{\RNplus} f(y)\Psi^N_\theta(y) \Psi^N_{-\lambda}(y) \prod_i {y_i}^{-1}{dy_i}$, integrate over $\iota \mathbb{R}^N$ with respect to $s_N(\lambda)d\lambda$ and use \eqref{Planch}. The application of the Plancherel identity is valid since $f(y)\Psi^N_\theta(y)$ is in $L^2(\RNplus,\prod_i dy_i/y_i)$ for $f$ continuous and  compactly supported on $\RNplus$,  and by the next lemma.

\begin{lemma}   Let $\thetarc_{i,j}=\thetar_i+\thetac_j>0$ be a solvable
parameter matrix.  Assume $\thetac_j-\thetar_i< 0<\thetar_i$
for all $i,j\ge 1$.
Then for  all  $y^0\in \RNplus$ and $n\ge 1$, the function
\begin{equation}
\frac{ \mu_n^N(y^0,y)}{\wf{N}{\theta}(y)}  \prod_{i=1}^{N} y_i
\end{equation}
is in $L^2(\RNplus,\prod_i dy_i/y_i)$.
\end{lemma}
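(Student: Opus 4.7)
The plan is to telescope the $\wfunc{N}{\thetacv}$-factors in the iteration of \eqref{Pbarker} so as to reduce the question to an $L^2$ bound on an unnormalized iterated kernel, then dominate by a product of one-dimensional inverse-gamma random walks and apply Plancherel. Write $p(y)$ for the density of $\mu_n^N(y^0,dy)$ with respect to $\prod_i dy_i/y_i$, so that $p(y)=\mu_n^N(y^0,y)\prod_iy_i$. Iterating \eqref{Pbarker} and telescoping along the trajectory gives
\[
p(y)=\frac{\wfunc{N}{\thetacv}(y)}{\wfunc{N}{\thetacv}(y^0)}\,R_n^N(y^0,y),
\]
where $R_n^N$ is the $n$-fold composition of the unnormalized kernels $\Pker{N}{\thetarc^{[k]}}$ from \eqref{PNdef}. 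Since $\wf{N}{\thetacv}(y)=\prod_iy_i^{-\thetac_i}\wfunc{N}{\thetacv}(y)$, the function from the lemma reads $\wfunc{N}{\thetacv}(y^0)^{-1}\prod_iy_i^{\thetac_i}R_n^N(y^0,y)$; dropping the positive constant $\wfunc{N}{\thetacv}(y^0)$ reduces the claim to $\prod_iy_i^{\thetac_i}R_n^N(y^0,y)\in L^2(\prod_idy_i/y_i)$.

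Each $\Pker{N}{\thetarc^{[k]}}$ factors into $N$ one-dimensional inverse-gamma multiplicative-jump kernels times the killing factor $\prod_{i=1}^{N-1}\exp(-\tilde y_{i+1}/y_i)\le 1$. Dropping the latter factor yields the pointwise bound $R_n^N(y^0,y)\le\prod_{j=1}^N q_n^{(j)}(y_j^0,y_j)$, where $q_n^{(j)}(a,b)$ is the density (with respect to $db/b$) of $a\prod_{k=1}^n d_{k,j}$ for independent $d_{k,j}\sim\Gamma^{-1}(\thetar_k+\thetac_j)$. By Fubini the claim reduces to showing, for each $j$, that $\int_0^\infty \bigl(y^{\thetac_j}q_n^{(j)}(y_j^0,y)\bigr)^2\,dy/y<\infty$.

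Finally, the logarithmic change of variable $x=\log(y/y_j^0)$ recasts this integral as $(y_j^0)^{2\thetac_j}\int e^{2\thetac_jx}f_j(x)^2\,dx$ with $f_j$ the density of $\sum_k\log d_{k,j}$. Plancherel applied to $h_j(x)=e^{\thetac_jx}f_j(x)$ reduces the problem to the $L^2(d\xi)$-integrability of
\[
\widehat{h_j}(\xi)=\prod_{k=1}^n \frac{\Gamma(\thetar_k+i\xi)}{\Gamma(\thetar_k+\thetac_j)},
\]
obtained from the Mellin identity $E[d^s]=\Gamma(\gamma-s)/\Gamma(\gamma)$ for $d\sim\Gamma^{-1}(\gamma)$. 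The assumption $\thetar_k>0$ is exactly what keeps the moment generating function of $\log d_{k,j}$ finite at $\Re s=\thetac_j$, so the formula for $\widehat{h_j}$ is legitimate. Stirling then gives $|\Gamma(\thetar_k+i\xi)|^2\sim 2\pi|\xi|^{2\thetar_k-1}e^{-\pi|\xi|}$ as $|\xi|\to\infty$, so $|\widehat{h_j}(\xi)|^2$ carries a factor $e^{-n\pi|\xi|}$ and is plainly integrable on $\R$. I expect the main obstacle to be essentially bookkeeping --- keeping straight which densities are with respect to which reference measure, and justifying the telescoping along the $n$-fold composition --- after which the positivity of each $\thetar_k$ pays for both the Mellin existence and, through Stirling raised to the $n$-th power, the rapid decay of $\widehat{h_j}$ needed for the $L^2$ bound.
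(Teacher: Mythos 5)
Your proof is correct, and it diverges from the paper's argument precisely at the final step. You agree with the paper up through the telescoping of the $h$-transform factors, dropping the killing term, and reducing to a product of $N$ one-dimensional multiplicative inverse-gamma random walks. The paper then applies Jensen's inequality to peel off the last step, $(R_{n,j})^2 \le \int R_{n-1,j}\,(r_{\thetarc_{n,j}})^2$, integrates away the $y_j$ variable to produce a constant whose finiteness uses $\thetar_n>0$, and is left with $\prod_i E[d_{i,j}^{2\thetac_j}]$, whose finiteness requires the other half of the hypothesis, $\thetac_j<\thetar_i$. You instead pass to log-variables, recognize the weighted density $h_j=e^{\thetac_jx}f_j$ as a tilting whose Fourier transform is a product of ratios of Gamma functions, and invoke Plancherel together with the exponential decay $|\Gamma(\thetar_k+\iota\xi)|\sim\sqrt{2\pi}|\xi|^{\thetar_k-1/2}e^{-\pi|\xi|/2}$. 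The small bookkeeping point you flag does go through: $h_j\in L^1$ since $E[e^{\thetac_jS}]=\prod_k\Gamma(\thetar_k)/\Gamma(\thetar_k+\thetac_j)<\infty$, and $\widehat{h_j}\in L^1\cap L^2$ by the decay, so $h_j\in L^2$ by Fourier inversion and Plancherel. What your route buys is uniformity and economy of hypothesis: it works for every $n\ge 1$ without the degenerate base-case interpretation of $R_{0,j}$, and it uses only $\thetar_i>0$, not the additional condition $\thetac_j<\thetar_i$ that the paper's moment computation $E[d_{i,j}^{2\thetac_j}]<\infty$ requires. The paper's route is more elementary (no harmonic analysis, just Jensen and a moment identity) and keeps the estimate in the original multiplicative variables, which fits the surrounding exposition; yours isolates more cleanly the source of the decay (the Gamma function's exponential falloff on vertical lines) and exposes that the stated hypothesis is stronger than what the lemma actually needs.
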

\begin{proof}
Iterating definition \eqref{Pbarker} and using \eqref{Whittaker 2} gives
\[    \frac{\mu_n^N(y^0,y)}{\wf{N}{\theta}(y)} =
 \Big(\,\prod_{j=1}^{N} y_j^{\thetac_j}  \Big)\, \frac{p^N_n(y^0,y)}{\wfunc{N}{\theta}(y^0)} \]
 where  $p^N_n(y^0,y)$ is the density of the $n$-fold composition of  kernels
 $\Pker{N}{\thetarc^{[1]}}(y^0,dy^1),\dotsc,  \Pker{N}{\thetarc^{[n]}}(y^{n-1},dy)$
  from \eqref{PNdef}.    Let
\[   r_\alpha(u,v) =  \Gamma(\alpha)^{-1} \Bigl(\frac{u}{v}\Bigr)^{\alpha}
\,\frac{e^{-u/v}} {v} ,  \quad  u,v\in(0,\infty), \]
denote the transition density of a multiplicative random walk on $(0,\infty)$ with
$\Gamma^{-1}(\alpha)$-distributed steps, and let
\[   R_{n,j}(u^0,u^n)=  \int_{(0,\infty)^{n-1}}   \; \prod_{i=1}^{n} r_{\thetarc_{i,j}}(u^{i-1},\,u^{i})
\,du^{n-1}\dotsm du^1 \]
denote the  $n$-step transition  density with parameters $\thetarc_{i,j}$
 from column $j$ of our solvable matrix.
    By dropping the
killing term from \eqref{PNdef} and by an application of Jensen's inequality,
\[  \bigl(\,p^N_n(y^0,y)\,\bigr)^2  \le   \prod_{j=1}^{N}   \bigl(R_{n,j}(y_j^0,y_j)\bigr)^2
 \le   \prod_{j=1}^{N}
  \int_0^\infty  R_{n-1,j}(y_j^0,\tilde y_j)  \bigl(r_{\thetarc_{n,j}}(\tilde y_j,\,y_j)\bigr)^2 \,d\tilde y_j .    \]
  Put  these together, noting that the integrals factor over $\RNplus$:
 \begin{align*}
&\int_{\RNplus} \Big(\, {\prod_{j=1}^{N} y_j}\Big)
\biggl( \frac{\mu_n^N(y^0,y)}{\wf{N}{\theta}(y)}\biggr)^2 \,dy\\
&\qquad \le  \frac1{\wfunc{N}{\theta}(y^0)^2}
\; \prod_{j=1}^{N}  \;
  \int_{(0,\infty)^2}     y_j^{2\thetac_j+1}   R_{n-1,\,j}(y_j^0,\tilde y_j)  \bigl(r_{\thetarc_{n,j}}(\tilde y_j,\,y_j)\bigr)^2 \,dy_j\,d\tilde y_j \\
&\qquad =  \frac{C_{N,n}(\thetar^{[n]}, \thetac)}{\wfunc{N}{\theta}(y^0)^2} \;
 \prod_{j=1}^{N}  \;
  \int_{(0,\infty)}    \,   \tilde y_j^{2\thetac_j}  R_{n-1,\,j}(y_j^0,\tilde y_j)   \,d\tilde y_j \\
  &\qquad =  \frac{C_{N,n}(\thetar^{[n]}, \thetac)}{\wf{N}{\theta}(y^0)^2} \;
 \prod_{j=1}^{N}  \prod_{i=1}^{n-1} \mathbb E[ d_{i,j}^{2\thetac_j} ]  \  < \  \infty.
  \end{align*}
  The first equality above integrates away the variables $y_j$, and the finiteness
 of the constant  $C_{N,n}(\thetar^{[n]}, \thetac)$ depends on $\thetar_n>0$.
 The second equality uses the independence of random walk   steps.
 The finiteness of the expectations is equivalent to $\thetac_j-\thetar_i<0$.
\end{proof}

\subsection{Proof of Theorem \ref{main corollary}}\label{main corollaryproofsec}
To prove part (i) set $y^0=y^{0,M}$ in \eqref{int_mu}. Asymptotic relation (20) in~\cite{OCon} gives that, as $M\to\infty$, the ratio $ \Psi_{\lambda}^N(y^{0,M}) / \Psi_{\thetacv}^N(y^{0,M}) \to 1$. Therefore, we only need to demonstrate that the limit $M\to\infty$ can be passed inside the integral. This follows from dominated convergence since
\[\left| \frac{\Psi_\lambda^N(y^{0,M})}{\Psi_{\thetacv}^N(y^{0,M})}\right|\leq
\left| \frac{\Psi_0^N(y^{0,M})}{\Psi_{\thetacv}^N(y^{0,M})}\right|,
\]
for $\lambda\in \iota\mathbb{R}^N$ and the fact that the rest of the integrand is in $L^1(\iota\mathbb{R}^N,s_N(\lambda)d\lambda)$. The latter follows from the fact that $\int_{\RNplus}f(y)\Psi_{\thetacv}(y)\Psi_{-\lambda}(y) \prod_idy_i/y_i $$\in L^2(\iota\mathbb{R}^N,s_N(\lambda)d\lambda)$, by the Plancherel isomorphism and the fact that $f$ is bounded and compactly supported, and $\prod_m\prod_i \Gamma(\hat\theta_m+\lambda_i)/\Gamma(\theta_i+\hat\theta_m)\in L^2(\iota\mathbb{R}^N,s_N(\lambda)d\lambda) $, for $n\geq N$. Indeed, using the asymptotics
\[\lim_{x_2\to\infty}|\Gamma(x_1+\iota x_2)| e^{\frac{\pi}{2}|x_2|}|x_2|^{\frac{1}{2}-x_1}=\sqrt{2\pi},
\qquad x_1,x_2\in \mathbb{R}
\]
it follows that
\begin{eqnarray*}
 \left |\prod_{m=1}^n\prod_{i=1}^{N}\frac{ \Gamma(\hat\theta_m+\lambda_i)}{\Gamma(\theta_i+\hat\theta_m)}\right|^2 s_N(\lambda)
&\sim&  e^{-\pi n \sum_{i=1}^N|\lambda_i| +\frac{\pi}{2}\sum_{1\leq i\neq j \leq N}|\lambda_i-\lambda_j|}\\
&\lesssim& e^{-\pi n \sum_{i=1}^N|\lambda_i| +\pi(N-1)\sum_{i=1}^N|\lambda_i|},
\end{eqnarray*}
which decays exponentially when $n\geq N$.

Part (ii) follows from the Whittaker integral identity (\ref{bs})
 \[
 \int_{\RNplus} e^{-sy_1}\Psi_\theta(y)\Psi_{-\lambda}(y) \prod_i\frac{dy_i}{y_i} = s^{\sum(-\lambda_i+\theta_j)} \prod_{i,j}\Gamma(\lambda_i-\theta_j)\in L^2(\iota\mathbb{R}^N,s_N(\lambda)d\lambda).
 \]
We can now repeat the argument used in the proof of part (i), using the function $f(y)=e^{-sy_1}$ (instead of a compactly supported $f$).

To prove part (iii) we
show  in Proposition \ref{M-lim-pr} below a more general statement:
  the entire array
$\zarr(n)$ converges in distribution,  as $M\to\infty$,   to the one defined by the path configurations, noting that this
statement can make sense  only for the
portion of the array  constructed by time $n$.

Let $P^\zarr$ denote the probability distribution of the process $\zarr(\cdot)$ when the
initial state is $z(0)=z\in\TN$, and let $E^z$ denote expectation under $P^\zarr$.
Let us also use the notation $E^\es$ when the array starts empty, in which
case at time $n$ only the portion $\{\zarr_{k\ell}(n): 1\le k\le N,\,1\le\ell\le k\wedge n\}$ of the array has been
defined.  Recall   $y^{0,M}=\left(e^{-M\rho_{N,\ell}}\right)_{1\leq \ell \leq N}$ with
$\rho$ from \eqref{rho}.

\begin{proposition}  Let $N, n\ge 1$, and  let $f$ be a bounded continuous function
of the $(0,\infty)$-valued coordinates  $\{\zarr_{k\ell}(s): 1\le s\le n, \,1\le k\le N,\,1\le\ell\le k\wedge s\}$.  Then
\be
\lim_{M\to\infty}  \int_{\TN} \Kbarker{N}{\theta}(y^{0,M}, dz) \, E^z[ f( \zarr(1),\dotsc, \zarr(n)) ]
=   E^\es[ f(\zarr(1),\dotsc,  \zarr(n)) ].
\label{M-lim1}\ee
\label{M-lim-pr}\end{proposition}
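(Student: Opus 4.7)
I would prove Proposition \ref{M-lim-pr} in two conceptual steps: a concentration result for the random initial state $z(0)$, and a deterministic continuity argument that propagates this concentration through the Noumi--Yamada dynamics onto the empty-array values at the coordinates probed by $f$.

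\emph{Concentration of $z(0)$.} Substitute $z_{k,\ell}(0) = e^{-M\rho_{k,\ell}}u_{k,\ell}$ for $1\le\ell\le k\le N$ into \eqref{Kker}; the boundary condition $z_{N,\cdot}(0)=y^{0,M}$ forces $u_{N,\ell}=1$.  The two scaling identities $\rho_{k,\ell}-\rho_{k+1,\ell}=\rho_{k+1,\ell+1}-\rho_{k,\ell}=-1/2$ (immediate from \eqref{rho}) turn each summand $z_{k,\ell}/z_{k+1,\ell}+z_{k+1,\ell+1}/z_{k,\ell}$ into $e^{M/2}(u_{k,\ell}/u_{k+1,\ell}+u_{k+1,\ell+1}/u_{k,\ell})$, while the polynomial prefactors $(z_{k,\ell}/z_{k+1,\ell})^{\theta_{k+1}-\theta_\ell}$ are at most simply exponential in $M$ and are dwarfed by the doubly-exponential factor $\exp(-e^{M/2}\Phi(u))$ with
\[
\Phi(u)=\sum_{1\le\ell\le k<N}\Bigl(\frac{u_{k,\ell}}{u_{k+1,\ell}}+\frac{u_{k+1,\ell+1}}{u_{k,\ell}}\Bigr).
\]
By AM--GM, $\Phi$ attains a unique minimum at $u\equiv 1$ with strictly positive-definite Hessian; a standard Laplace-method estimate, applied after dividing by $\wfunc{N}{\theta}(y^{0,M})$ which has the same leading saddle asymptotics, then yields $u_{k,\ell}\to 1$ in probability as $M\to\infty$, with Gaussian fluctuations of order $e^{-M/4}$.

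\emph{Propagation.} Fix a realization of the weights $d^{[1,n]}$ and let $z^M(\cdot)$ denote the process started from the deterministic array $\bar z^M_{k,\ell}=e^{-M\rho_{k,\ell}}$.  By induction on $s$ and within each time step on the diagonal index $\ell$, I claim that for every $s\ge 1$ and every $(k,\ell)$ with $\ell\le k\wedge s$ we have $z^M_{k,\ell}(s)\to z^{\es}_{k,\ell}(s)$ as $M\to\infty$.  The exponents $\rho_{k,\ell}$ are tailored so that in each rational expression of \eqref{row-k-eq} the diverging initial entries (those with $\rho_{k,\ell}<0$) balance the vanishing ones ($\rho_{k,\ell}>0$): the limits of the ratios exist and coincide with the basic insertion rule \eqref{g-row-ins} when an existing diagonal is updated, and with the first-unit-vector rule \eqref{es-row-ins} when a new diagonal first appears at time $s=\ell$.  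Combining the concentration and the propagation, and using that $z(0)$ is independent of $d^{[1,n]}$, the boundedness and continuity of $f$ give \eqref{M-lim1} by dominated convergence.

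\emph{Main obstacle.} The delicate part is the propagation, since each time slice contains entries of several different $M$-scales and the updates \eqref{row-k-eq} involve ratios that could in principle develop $0\cdot\infty$ indeterminacies, especially at the moments a diagonal first appears or at the corners $\ell=k$.  The cleanest systematic route to avoid bookkeeping by hand is to lift the analysis to the matrix factorization \eqref{H2} and the $\tau$-variables \eqref{tau}: with $H_0^M$ the matrix encoding $\bar z^M$, one studies the product $H_0^M\,H(d^{[1]})\dotsm H(d^{[n]})$ and shows that, as $M\to\infty$, the minors appearing in \eqref{H4}--\eqref{H8} degenerate precisely onto the minors that define the empty-array $\tau_{k,\ell}(n)$ in \eqref{tau}; the identification \eqref{H8} together with \eqref{ztau-3} then transfers the convergence to the $z$-variables termwise.
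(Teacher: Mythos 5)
Your two-step strategy (concentration of the random initial array followed by a deterministic propagation through the dynamics) is the same high-level structure as the paper's proof, and your suggestion to route the propagation through the matrix factorization and the $\tau$-minors is exactly what the paper does in Lemma \ref{M-lim13-lm}. However, the concentration step contains a genuine error.

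Your claim that the function $\Phi(u)=\sum_{1\le\ell\le k<N}\bigl(u_{k,\ell}/u_{k+1,\ell}+u_{k+1,\ell+1}/u_{k,\ell}\bigr)$, subject to $u_{N,\ell}=1$, attains its minimum at $u\equiv 1$ by AM--GM is false for $N\ge 3$. Take $N=3$: with $u_{3,\ell}=1$ the stationarity conditions are $u_{11}^2=u_{21}u_{22}$, $u_{21}^2=u_{11}+1$, $u_{22}^2=u_{11}/(u_{11}+1)$, which give $u_{11}=1$, $u_{21}=\sqrt 2$, $u_{22}=1/\sqrt 2$ — not $u\equiv 1$. There is no pairing that makes AM--GM close up at $u\equiv 1$; the cross-ratios $u_{k,\ell}/u_{k+1,\ell}$ and $u_{k+1,\ell+1}/u_{k,\ell}$ have different denominators and numerators, so equality in AM--GM forces $u_{k,\ell}^2=u_{k+1,\ell}u_{k+1,\ell+1}$, which is a multiplicative interlacing condition rather than the identity array. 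What is actually true (this is the paper's Lemma \ref{F-lm}, which proves strict concavity of $\mathcal F_0 = -\log$ of the penalty in additive variables $t_{k\ell}=\log u_{k\ell}$ and pins down the critical point) is only that the unique minimizer satisfies the weaker constraint $\prod_{\ell=1}^k u^*_{k,\ell}=1$ for every row $k$, equivalently $\sum_\ell t^0_{k\ell}=0$.

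This error happens not to affect the final answer: in the minor asymptotics you propose (and which the paper carries out), the limiting values $\zarr_{k,m+1}(m)=\tau^{m+1}_k/\tau^m_k$ depend on the centering array only through the row sums $\sum_\ell t^0_{k\ell}$, so either $u^*$ or $u\equiv1$ gives the same limit $\delta_{k,m+1}$. But as written your proof asserts a false concentration point and then relies on it, so you would need to replace the AM--GM argument with a genuine critical-point computation (e.g.\ the graph-Laplacian argument of Lemma \ref{F-lm}), and then note explicitly that the propagation only uses the row-sum property of the critical array. Finally, a minor structural remark: the paper shortcuts your ``propagate every entry at every time'' induction by reducing (via continuity of the map $G$ from the new-diagonal entries $(\zarr_{m+1}(m))_{m<n\wedge N}$ and the weights to the whole trajectory) to proving convergence only of the finitely many ``new diagonal'' entries $\zarr_{k,m+1}(m)\to\delta_{k,m+1}$; this avoids bookkeeping at interior sites and is worth adopting.
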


Before turning to the proof of Proposition \ref{M-lim-pr} we use it to derive
part (iv) of Theorem \ref{main corollary}.   Let $\nu^M(dz)$ denote the initial
distribution $\Kbarker{N}{\theta}(y^{0,M}, dz)$ on arrays, and let  $f$ be a bounded continuous function
on arrays.   Then part (iv) follows from this calculation:
\begin{align*}
E^\es[ f( \zarr(n)) ] &= \lim_{M\to\infty}   E^{\nu^M}[ f( \zarr(n)) ]
= \lim_{M\to\infty}  E^{\nu^M} \int_{\TN} \Kbarker{N}{\theta}(y(n), dz) \, f(z)\\
&= E^\es \int_{\TN} \Kbarker{N}{\theta}(y(n), dz) \, f(z)
= \int_{\SN{N}} \mu^N_n(dy) \int_{\TN} \Kbarker{N}{\theta}(y, dz) \, f(z).
 \end{align*}
 The first and third equalities are instances of  \eqref{M-lim1}, the second is \eqref{Thm rel 1},  and the last one
 is Proposition \ref{M-lim-pr} again because one consequence of  limit
 \eqref{M-lim1},  together with  $\mu^N_n(y^{0,M}, dy)\to \mu^N_n(dy)$ from part (i),
 is that  $\mu^N_n$ is the marginal distribution of $y(n)$ under $P^\es$ for $n\ge N$.
 The third equality above is justified by arguing that
$\int\Kbarker{N}{\theta}(y, dz) \, f(z) $ is a continuous function of $y$, or
equivalently, that  $y\mapsto \Kbarker{N}{\theta}(y, dz)$ is a continuous mapping
into the space of probability measures on arrays (in the usual weak topology of
probability measures, generated by bounded continuous functions).
This follows from the fact that, off the bottom row, $\Kbarker{N}{\theta}(y, dz)$ has
a density that is  jointly continuous  in $(y,z)$.  Pointwise convergence of
densities implies convergence of probability measures, a result known as
Scheff\'e's theorem.   This completes the proof of part (iv).

To prove part (v) of Theorem \ref{main corollary} we will use the alternative form of
the Whittaker integral identity (\ref{bs'}).  By part (i) of the theorem, we have
\begin{eqnarray*}
\int_{\mathbb{R}^N_+} f(y) \mu_N^N(dy)=
\int_{\iota \mathbb{R}^N} d\lambda s_N(\lambda)
 \left( \int_{\mathbb{R}^N_+} f(y) \Psi^N_\theta(y) \Psi^N_{-\lambda}(y) \prod_i\frac{dy_i}{y_i}\right)
 \prod_{m=1}^N\prod_{i=1}^N\frac{\Gamma(\hat\theta_m+\lambda_i)}{\Gamma(\theta_i+\hat\theta_m)}.
\end{eqnarray*}
for any continuous, compactly supported function $f$ on $\RNplus$.  By (\ref{bs'})
$$\int _{\RNplus}e^{-y_N^{-1}} \wf{N}{\lambda}(y)\wf{N}{\hat\theta}(y)\prod_{i=1}^N\frac{dy_i}{y_i} =
 \prod_{m=1}^N\prod_{i=1}^N\Gamma(\lambda_i+\hat\theta_m).$$
 As above, the functions $\int_{\mathbb{R}^N_+} f(y) \Psi^N_\theta(y) \Psi^N_{-\lambda}(y) \prod_i\frac{dy_i}{y_i}$
 and $ \prod_{m=1}^N\prod_{i=1}^N\Gamma(\lambda_i+\hat\theta_m)$ are both in $L^2(\iota\R^N,s_N(\lambda)d\lambda)$
 so we have, by the Plancherel theorem,
 $$\int_{\mathbb{R}^N_+} f(y) \mu_N^N(dy)=\prod_{m=1}^N\prod_{i=1}^N\Gamma(\theta_i+\hat\theta_m)^{-1}
  \int_{\mathbb{R}^N_+} f(y) \Psi^N_\theta(y) e^{-y_N^{-1}} \Psi^N_{\hat\theta}(y) \prod_{i=1}^N\frac{dy_i}{y_i},$$
  as required.

\begin{proof}[Proof of Proposition \ref{M-lim-pr}]
 Figure \ref{NYfig3} shows that  $\{\zarr_{k\ell}(s): 1\le s\le n, \, 1\le k\le N,\,1\le\ell\le k\wedge s\}$
 can be written as a function of $\{\zarr_{m+1}(m): 0\le m< n\wedge N\}$ and $d^{[1,n]}$.
 Let $( \zarr(1),\dotsc, \zarr(n))=G\bigl( (\zarr_{m+1}(m))_{0\le m<n\wedge N}, d^{[1,n]}\bigr)$ represent this
 functional relationship defined by the row insertion procedure.   The case of starting with an empty array is  the one where each vector $\zarr_{m+1}(m)=e_1^{(N-m)}$ where
 $e_1^{(k)}=(1,0,\dotsc, 0)$ is the first  $k$-dimensional   unit vector.  This can be
 seen from Figure \ref{NYfigb}.
   If we let $\mathbb P$ denote the probability distribution of the
 weight matrix $d$,  the goal \eqref{M-lim1}  can be re-expressed as
\be \begin{aligned}
& \lim_{M\to\infty}  \int_{\TN} \Kbarker{N}{\theta}(y^{0,M}, dz)  \int_{\R_+^{nN}} \mathbb P(d(d^{[1,n]})) \,
f\bigl(G\bigl( (\zarr_{m+1}(m))_{0\le m< n\wedge N}, d^{[1,n]}\bigr)\bigr)  \\
&\qquad\qquad \qquad
 =    \int_{\R_+^{nN}}  \mathbb P(d(d^{[1,n]})) \, f\bigl(G\bigl( ( e_1^{(N-m)} )_{0\le m< n\wedge N}, d^{[1,n]}\bigr)\bigr).
\end{aligned} \label{M-lim2}\ee
(Notation $\mathbb P(d(d^{[1,n]}))$   means that the matrix $d^{[1,n]}$ is the integration
variable under the measure $\mathbb P$.)
On the left above the vectors $(\zarr_{m+1}(m))_{0\le m< n\wedge N}$ are themselves functions of the initial
values
 $(\zarr_m(0))_{0\le m< n\wedge N}=(\zarr_m)_{0\le m< n\wedge N}$ and the weights  $d^{[1,n-1]}$, as shown in Figure \ref{NYfig3}.   Comparison of the $\xi'$ output
 in equations \eqref{g-row-ins} and  \eqref{es-row-ins}
 shows that the mapping $G$ is continuous as the inputs $\zarr_{m+1}(m)\to e_1^{(N-m)}$.
Thus the upshot is that we need to show the weak convergence
$(\zarr_{m+1}(m))_{0\le m< n\wedge N} \to  (e_1^{(N-m)} )_{0\le m< n\wedge N}$ as $M\to\infty$.
Since the limit is deterministic we can ignore the joint distribution and do this one
coordinate at a time. So
it suffices to fix $0\le m<k\le N$ such that $m<n\wedge N$  and show that
\be
\zarr_{k,m+1}(m) \to  \delta_{k,m+1}  \quad\text{ in probability  as $M\to\infty$}  \label{M-lim3}\ee
when $\zarr_{k,m+1}(m)$ has the probability distribution described by the left-hand side of \eqref{M-lim2} and $\delta_{k,m+1}$ is the Kronecker delta.

Write
\be \zarr_{k,m+1}(m)=V_{k,m}[(\zarr_{k,\ell}(0))_{1\le\ell\le m+1, \ell\le k\le N}, d^{[1,m]}] \label{zV}\ee
to indicate the functional relationship from the inputs to the array element
$\zarr_{k,m+1}(m)$.   Our   goal \eqref{M-lim3} follows if we show that
for any fixed $d^{[1,m]}\in(0,\infty)^{mN}$ and a bounded continuous test function $f$,
\be \begin{aligned}
& \lim_{M\to\infty}
 \int_{\TN} \Kbarker{N}{\theta}(y^{0,M}, dz)
f\bigl( V_{k,m}[ (\zarr_{k,\ell})_{1\le\ell\le m+1, \ell\le k\le N}, d^{[1,m]}]\bigr)
 =   f( \delta_{k,m+1}).
\end{aligned} \label{M-lim12}\ee

To understand the asymptotics of  $\Kbarker{N}{\theta}(y^{0,M}, dz) $ it is convenient
to switch from multiplicative to additive variables. Define an array
$t=\{t_{k\ell}\}_{1\le \ell\le k\le N}$ by  $\zarr_{k\ell}=e^{t_{k\ell}}$.
 Let $\wt K(y,\,dt)$ denote the distribution of the array $t$ when $\zarr$ has distribution
 $\Kbarker{N}{\theta}(y, d\zarr)$.  For
 $u\in\R^N$ let
 \[ W(u)=\{ t=\{t_{k\ell}\}_{1\le \ell\le k\le N}: t_{N,i}=u_i, \, 1\le i\le N\} \]
 be the set of arrays
 with  bottom row  $u$.
Let
\be
\mathcal{F}_{\theta} (t)= \sum_{k=1}^N\thetac_k \Bigl(\; \sum_{\ell=1}^{k-1} t_{k-1,\ell}
- \sum_{\ell=1}^kt_{k,\ell} \Bigr)
                  -\sum_{k=1}^{N-1}\sum_{\ell=1}^k \left( e^{t_{k,\ell}-t_{k+1,\ell}} +
                                                        e^{t_{k+1,\ell+1}-t_{k,\ell}}\right).
\label{cF}\ee
Then, for a bounded continuous test function $g$,
\be\begin{aligned}
  \int g(t) \,\wt K(y^{0,M},\,dt) &=  \frac1{C(M)}
\int_{W(-M\rho^{[n]}) }  g(t) \, e^{\mathcal F_\theta(t)}\,dt  \\
&= \frac1{C(M)} \int_{W(0)}   g(t-M\rho) \, e^{S_\theta(t)+e^{M/2}\mathcal F_0(t) }\,dt .
\end{aligned}\label{M7}\ee
Above $C(M)=\int_{W(0)}    e^{S_\theta(t)+e^{M/2}\mathcal F_0(t) }\,dt $ is the  normalization needed for a probability measure.
We changed  variables by shifting  $t$ to $t-M\rho$ where $\rho=(\rho_{k\ell})_{1\le \ell\le k\le N}$
is the array from \eqref{rho} defined  by $\rho_{k\ell}=\tfrac12(k-1)-\ell+1$.
 Defining  $S_\theta(t)=\mathcal F_\theta(t)-\mathcal F_0(t)$   leads to
 \[ \mathcal F_\theta(t-M\rho)= S_\theta(t-M\rho)+\mathcal F_0(t-M\rho)
=S_\theta(t)+e^{M/2}\mathcal F_0(t).\]

  Return to the right-hand side of \eqref{M-lim12}
to rewrite as
 \begin{align}
& \int_{\TN} \Kbarker{N}{\theta}(y^{0,M}, dz)
f\bigl( V_{k,m}[ (\zarr_{k,\ell})_{1\le\ell\le m+1,\, \ell\le k\le N}, d^{[1,m]}]\bigr)  \nn\\
&=  \int  \wt K(y^{0,M}, dt)
f\bigl( V_{k,m}[ (e^{t_{k\ell}} )_{1\le\ell\le m+1,\, \ell\le k\le N}, d^{[1,m]}]\bigr) \nn\\
&= \frac1{C(M)} \int_{W(0)}  f\bigl( V_{k,m}[ (e^{t_{k\ell}-M\rho_{k\ell}} )_{1\le\ell\le m+1,\, \ell\le k\le N}, d^{[1,m]}]\bigr)  \, e^{S_\theta(t)+e^{M/2}\mathcal F_0(t) }\,dt.
\label{aux-27}\end{align}
We claim that
\be  \text{line \eqref{aux-27}   converges to $ f( \delta_{k,m+1})$ as $M\to\infty$.}
\label{M-lim12.5}\ee
Limit \eqref{M-lim12.5} finishes the proof of Proposition \ref{M-lim-pr}.  To establish it we prove
 the two lemmas below.

\begin{lemma}  On the set $W(0)$, the function  $\mathcal F_0$ is strictly concave and  has a unique
maximum $t^0$ that satisfies   $\sum_{\ell=1}^k t^0_{k\ell}=0$ for each $1\le k\le N$.
\label{F-lm}\end{lemma}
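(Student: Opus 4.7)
\textbf{Proof plan for Lemma \ref{F-lm}.} Restricted to $W(0)$, the free variables are $(t_{k,\ell})_{1 \le \ell \le k \le N-1}$, and $\mathcal{F}_0$ takes the form $-\sum_i e^{L_i(t)}$ where the $L_i$ are the linear forms $t_{k,\ell}-t_{k+1,\ell}$ and $t_{k+1,\ell+1}-t_{k,\ell}$ (for $1\le \ell \le k \le N-1$), with the convention $t_{N,\cdot}\equiv 0$. Each summand is concave, so $\mathcal{F}_0$ is concave, and it is \emph{strictly} concave precisely when the linear forms $\{L_i\}$ span the dual of $\R^{N(N-1)/2}$. The plan is to establish this spanning by downward induction on the row index: if $v = (v_{k,\ell})$ is annihilated by every $L_i$, then the $k=N-1$ relations read $v_{N-1,\ell}=0$ and $-v_{N-1,\ell}=0$, which gives $v_{N-1,\cdot}=0$; then the $k=N-2$ relations $v_{N-2,\ell}=v_{N-1,\ell}=0$ propagate upward to kill every $v_{k,\ell}$.

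For existence of a maximizer, I would verify coercivity: as $t_{k,\ell}\to +\infty$ along any direction, the term $-e^{t_{k,\ell}-t_{k+1,\ell}}$ (or, at the top row, $-e^{t_{1,1}}$ with $t_{2,1}$ fixed by the argument below, etc.) drives $\mathcal{F}_0\to-\infty$, and analogously for $t_{k,\ell}\to -\infty$ through the $-e^{t_{k+1,\ell+1}-t_{k,\ell}}$ term. More precisely, I would show that for every nonzero direction $v$ in the variable space, at least one linear form $L_i$ satisfies $L_i(v)\ne 0$ (this is exactly the spanning property above), so $\mathcal{F}_0(sv)\to-\infty$ as $|s|\to\infty$; combined with continuity this yields a maximizer, which is unique by strict concavity.

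To obtain the symmetry $\sum_{\ell=1}^k t^0_{k,\ell}=0$, the key observation is that $\mathcal{F}_0$ is invariant under the involution $\sigma:t\mapsto t^\sigma$ defined by $t^\sigma_{k,\ell} = -t_{k,k+1-\ell}$. Indeed, $\sigma$ preserves $W(0)$, and the change of variable $m=k+1-\ell$ shows that $\sigma$ simply permutes the two families of exponents:
\begin{align*}
t^\sigma_{k,\ell}-t^\sigma_{k+1,\ell} &= t_{k+1,(k+1)-\ell+1}-t_{k,k+1-\ell} = t_{k+1,m+1}-t_{k,m},\\
t^\sigma_{k+1,\ell+1}-t^\sigma_{k,\ell} &= t_{k,k+1-\ell}-t_{k+1,k+1-\ell} = t_{k,m}-t_{k+1,m}.
\end{align*}
By uniqueness of the maximizer, $t^0$ must be a fixed point of $\sigma$, i.e.\ $t^0_{k,\ell}=-t^0_{k,k+1-\ell}$, which gives $\sum_{\ell=1}^k t^0_{k,\ell}=0$.

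The only mildly subtle step is the spanning argument for strict concavity; the symmetry step is essentially automatic once uniqueness is in hand. Coercivity is routine given spanning, since $\mathcal{F}_0$ restricted to any ray must blow down to $-\infty$.
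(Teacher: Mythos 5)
Your proposal is correct, but the route to the centering identity $\sum_\ell t^0_{k\ell}=0$ is genuinely different from the paper's. For strict concavity you and the paper use the same idea: write $\mathcal{F}_0=-\sum_{a}e^{L_a(t)}$ with linear forms $L_a$ indexed by the edges of the triangular graph, observe the Hessian is $-\sum_a L_a(\cdot)^2\,e^{L_a(t)}$, and reduce strictness to the linear-algebra statement that no nonzero $(v_{k\ell})_{1\le\ell\le k\le N-1}$ annihilates all $L_a$; both of you prove this by a short downward induction in $k$. Where you diverge is the final step. The paper writes out the critical-point equations $\partial\mathcal{F}_0/\partial t_{ki}=0$, multiplies them over each row to telescope, and deduces $k^{-1}\sum_\ell t^0_{k\ell}=N^{-1}\sum_\ell t^0_{N\ell}=0$. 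You instead exhibit the involution $\sigma:t_{k,\ell}\mapsto -t_{k,k+1-\ell}$, verify that it preserves $W(0)$ and permutes the two families of exponents (so $\mathcal{F}_0\circ\sigma=\mathcal{F}_0$), and conclude from uniqueness of the maximizer that $t^0$ is $\sigma$-fixed, which forces each row to sum to zero. Your symmetry argument is cleaner and avoids the multiplicative telescoping computation entirely; the paper's argument, on the other hand, also yields the explicit critical-point relations, which can be useful elsewhere.

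Two small comments. First, you explicitly address existence of a maximizer via coercivity, which the paper does not spell out (it implicitly relies on \cite{OCon} and \cite{R}); this is a genuine plus. Second, and this is the only real imprecision: you invoke ``the spanning property'' to conclude $\mathcal{F}_0(sv)\to-\infty$ as $|s|\to\infty$ for each nonzero $v$. Spanning alone is not quite enough: $L_a(v)\ne 0$ for some $a$ only gives decay of that term as $s\to+\infty$ \emph{or} as $s\to-\infty$, not both. What you actually need is the stronger statement that for every $v\ne 0$ there is some $a$ with $L_a(v)>0$ (equivalently, the system $L_a(v)\le 0$ for all $a$ forces $v=0$). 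Fortunately your downward induction proves exactly this stronger fact: at row $k=N-1$ the constraints $v_{N-1,\ell}\le 0$ and $-v_{N-1,\ell}\le 0$ already force equality, and this propagates upward. So the argument is correct in substance, but you should state and use the one-sided version of the spanning claim rather than the two-sided one.
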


\begin{lemma}  For each fixed  $1\le m+1\le k\le N$, $m<n$, $d^{[1,m]}\in(0,\infty)^{mN}$,
\be
\lim_{\substack{M\to\infty\\ t\to t^0}} \zarr_{k,m+1}(m)=
\lim_{\substack{M\to\infty\\ t\to t^0}}  V_{k,m}[ (e^{t_{k\ell}-M\rho_{k\ell}})_{1\le\ell\le m+1, \ell\le k\le N}, d^{[1,m]}]
= \delta_{k,m+1}.
\label{M-lim13}\ee
\label{M-lim13-lm}\end{lemma}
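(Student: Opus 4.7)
\emph{Proof proposal.} The plan is a direct asymptotic analysis of the geometric row-insertion dynamics of Definition \ref{NYdef-z}, proving by induction on the time index $p=0,1,\ldots,m$ the following stronger claim: as $M\to\infty$ with $t\to t^0$, (A) for all $1\le\ell\le p$ and $\ell\le k\le N$, the ``saturated'' entries converge to the empty-initial-data values, $z_{k,\ell}(p)\to z^\emptyset_{k,\ell}(p):=\tau_{k,\ell}(p)/\tau_{k,\ell-1}(p)$; and (B) on the frontier diagonal, $z_{p+1,p+1}(p)\to 1$ while $z_{k,p+1}(p)\to 0$ exponentially fast for every $k>p+1$. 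Taking $p=m$ in (B) is precisely \eqref{M-lim13}.

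The base case $p=0$ follows directly from the explicit initial data $z_{k,1}(0)=e^{t_{k,1}-M(k-1)/2}$ together with the identity $t^0_{1,1}=0$, which is the $k=1$ instance of the row-sum constraint $\sum_{\ell=1}^k t^0_{k,\ell}=0$ supplied by Lemma \ref{F-lm}. For the induction step $p\to p+1$, one computes $z(p+1)=z(p)\leftarrow d^{[p+1]}$ by iterating the basic word-insertion \eqref{g-row-ins} over diagonals $\ell=1,2,\ldots,p+2$, substituting the asymptotics from (A)--(B) at each stage. On the saturated diagonals $\ell\le p$, both $\xi_k$ and $\xi'_{k-1}$ appearing in \eqref{g-row-ins} are $O(1)$, and a short calculation (or equivalently a direct appeal to the determinantal formula $\tau_{k,\ell}(n)=\det H^{[1,\ell]}_{[k-\ell+1,k]}$ of \cite[Prop.~1.3]{NY}) shows that the updated entries reproduce the empty-array values at time $p+1$. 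On the interface $\ell=p+1$ the sum $\xi_k+\xi'_{k-1}$ is dominated by its $O(1)$ summand, so the diagonal $z_{\cdot,p+1}(p+1)$ enters the saturated regime. Finally, on the virgin diagonal $\ell=p+2$, the corner update $z_{p+2,p+2}(p+1)=a_{p+2,p+2}\,z_{p+2,p+2}(p)$ involves a cancellation between the exponentially large factor $z_{p+2,p+2}(p)$ and the exponentially small auxiliary weight $a_{p+2,p+2}$ built up from the preceding diagonal updates; the row-sum constraint $\sum_{\ell=1}^{p+2}t^0_{p+2,\ell}=0$ from Lemma \ref{F-lm} then forces this finite limit to be exactly $1$.

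The main obstacle is this last bookkeeping of large/small cancellations across the virgin diagonal. The algebraic mechanism rests on the additive identities $\rho_{k,\ell}-\rho_{k-1,\ell}=\tfrac12$ and $\rho_{k,\ell}-\rho_{k,\ell-1}=-1$ built into $\rho_{k,\ell}=\tfrac{k-1}{2}-\ell+1$, which guarantee that the exponential prefactors appearing in the ratios and sums of \eqref{g-row-ins} combine into well-defined finite or vanishing limits; together with the constraints on $t^0$ from Lemma \ref{F-lm}, which pin down the finite constants on the diagonal so that the corner value converges to exactly $1$ rather than some other positive number. Once this boundary cancellation is verified, the vanishing of $z_{k,p+2}(p+1)$ for $k>p+2$ propagates automatically through \eqref{g-row-ins}, completing the induction.
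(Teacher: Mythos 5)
Your plan of a direct asymptotic induction on the time index is a genuinely different route from the paper's. The paper does not induct over time: it invokes the Noumi--Yamada matrix formalism to express $\zarr_{k,m+1}(m)$ in one shot as a ratio $\tau^{m+1}_k/\tau^m_k$ of minor determinants of a single matrix $H$ (equation \eqref{Heq2}) that encodes both the $M$-dependent initial ratio variables and the weights $d^{[1,m]}$, and then reads off the asymptotics of the two minors from a Lindstr\"om--Gessel--Viennot path representation, via a combinatorial count of how many exponentially small factors every contributing path tuple must traverse. You have correctly identified the two mechanisms the paper also exploits --- the row-sum constraint $\sum_\ell t^0_{k,\ell}=0$ from Lemma~\ref{F-lm} to pin the corner value to $1$, and the additive structure of $\rho$ to organise the exponential cancellations.

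However, as written your induction does not close. To establish (B) at time $p+1$ you must estimate $\zarr_{k,p+2}(p+1)=a_{k,p+2}(p+1)\bigl(\zarr_{k-1,p+2}(p+1)+\zarr_{k,p+2}(p)\bigr)$ for $k>p+2$. The summand $\zarr_{k,p+2}(p)$ lies on the diagonal $\ell=p+2$, about which hypotheses (A) and (B) at time $p$ are silent; worse, that entry can be exponentially large in $M$ --- at time $0$ it equals $e^{t_{k,p+2}+M(p+1-(k-1)/2)}$, which grows for $p+2\le k<2p+3$, and $p$ insertions only partially tame it. So the assertion that ``the vanishing propagates automatically'' is not supported: the small prefactor $a_{k,p+2}(p+1)$ is multiplied by a potentially large term, and verifying the required cancellation forces you to track $M$-rates across all diagonals $\ell>p+1$, which your induction hypothesis does not do. The same issue already appears in deriving (B) at time $p$ for the off-corner entries $\zarr_{k,p+1}(p)$ with $k>p+1$: the corner $\zarr_{p+1,p+1}(p)\to1$ does follow from (A) at time $p$ by telescoping the corner updates and invoking the row-sum constraint, but there is no such telescoping for the entries below it. Finally, your appeal to the formula $\tau_{k,\ell}(n)=\det H^{[1,\ell]}_{[k-\ell+1,k]}$ of \cite[Prop.~1.3]{NY} is stated for the empty-initial-array matrix $H=H(d^{[1]})\cdots H(d^{[n]})$; the process here starts from a non-trivial $M$-dependent array, so that formula does not apply to $\zarr(p)$ as is --- one needs the extended product of $H_\ell$-matrices incorporating $\eta_\ell(0)$ that the paper builds in \eqref{Heq1}--\eqref{Heq2}, which is exactly the object whose minors the paper's proof analyses.
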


Lemma \ref{F-lm} implies that the probability
measure $C(M)^{-1} \mathbf 1_{W(0)}(t)\,e^{S_\theta(t)+e^{M/2}\mathcal F_0(t) }\,dt$ converges
weakly  to the pointmass at $t^0$.   This together with
  \eqref{M-lim13} and the boundedness and continuity of $f$ imply   \eqref{M-lim12.5}.
We have proved Proposition \ref{M-lim-pr} but it remains to prove the lemmas above.
\end{proof}

\begin{proof}[Proof of Lemma \ref{F-lm}]   This lemma comes from \cite{OCon}
and \cite{R}. We include the proof for the sake of completeness.

The critical point equations    $\frac\partial{\partial t_{ki}}\mathcal F_0(t) =0$
for $1\le i\le k<N$  rearrange to
\begin{align*}
e^{2t_{ki}}=\frac{  e^{t_{k-1,i}} \ind_{\{i<k\}} + e^{t_{k+1,i+1}}  }
{ e^{-t_{k-1,i-1}} \ind_{\{k\ge i>1\}} + e^{-t_{k+1,i}} }     \qquad  \text{for $1\le i\le k<N$ }.
 \end{align*}
The case $k=1$ gives   $t_{11}=(t_{21}+t_{22})/2$.  Multiplying  equations
together  gives
\[    e^{2\sum_{i=1}^k t_{ki}}=  e^{\sum_{i=1}^{k-1} t_{k-1,i}  + \sum_{i=1}^{k+1}{t_{k+1,i+1}} }
\quad \text{ for $2\le k\le N$.}  \]
From this follows   $k^{-1}\sum_{i=1}^k t_{ki}=N^{-1}\sum_{i=1}^k t_{Ni}$ for
$1\le k<N$, and these all $=$ $0$ by the $W(0)$ condition.

Following \cite[p.~136]{R} we  write $\mathcal F_0$ in the following form.
Consider  the directed graph $(\mathcal V, \mathcal E)$
with vertex set $\mathcal V=\{ (k,i):  1\le i\le k\le N\}$ and where
$\mathcal E$ contains  all possible  edges $((k+1,i),(k,i))$
and $((k,i),(k+1,i+1))$.   Edge $a=(u(a), v(a))$ is directed form vertex $u(a)$ to vertex $v(a)$.
Then
 \[  \mathcal F_0(t) = -\sum_{a\in\mathcal E} e^{t_{v(a)}-t_{u(a)}}  \]
and  for vertices $x,y$
\begin{align*}   \frac{\partial^2 \mathcal F_0(t)  }{\partial t_{x} \partial  t_{y}}
&=-\,\biggl( \; \sum_{a\in\mathcal E: v(a)=x} e^{t_x-t_{u(a)}}  +
   \sum_{a\in\mathcal E: u(a)=x} e^{t_{v(a)}-t_{x}}    \biggr)\ind_{\{x=y\}}
 \;  +  e^{t_x-t_y} \ind_{\{(y,x)\in\mathcal E\}} + e^{t_y-t_x} \ind_{\{(x,y)\in\mathcal E\}}.
\end{align*}
Take a vector  $(\alpha_x)_{x\in\mathcal V}$ such that $\alpha_{(N,i)}=0$ (because
the variables $t_{N,i}$ are not free to vary on $W(0)$).
Then
\begin{align*}  \sum_{x,y\in\mathcal V} \alpha_x\alpha_y \frac{\partial^2 \mathcal F_0(t)  }{\partial t_{x} \partial  t_{y}}
= -  \sum_{a\in\mathcal E} (\alpha_{v(a)}-\alpha_{u(a)})^2 e^{t_{v(a)}-t_{u(a)}}
\end{align*}
is $<0$ unless $\alpha=0$.  This gives strict concavity and the unique maximum.
 \end{proof}

\begin{proof}[Proof of Lemma \ref{M-lim13-lm}]   First we take care of the case
$m=0$.   This is read off directly from the initial values and $t^0_{11}=0$:
$z_{k,1}(0)=e^{t_{k,1}+M(1-(k+1)/2)}\to \delta_{k,1}$ as $M\to\infty$ and
$t\to t^0$.

For the rest of the proof $m\ge 1$.  We turn to the matrix machinery
developed in \cite{NY}.  For that purpose we  consider the
  row insertion procedure also   in terms of ratio variables.
   Let $\zratio=(\zratio_\ell,\dotsc, \zratio_N)$ denote the ratio variables
associated with the vector  $\xi=(\xi_\ell,\dotsc, \xi_N)$:
 $\xi_k=\zratio_\ell \zratio_{\ell+1}\dotsm \zratio_k$  for $k=\ell,\dotsc, N$.
 Similarly   $\xi'_k=\zratio'_\ell \zratio'_{\ell+1}\dotsm \zratio'_k$.
 Then the row insertion
\be
\begin{array}{ccc}
& b & \\
\xi & \cross & \xi' \\
& b'
\end{array}
\quad \text{  defined in Definition \ref{NYdef}  is equivalently expressed as} \quad
\begin{array}{ccc}
& b & \\ \zratio & \cross & \zratio'  . \\
& b'
\end{array} \label{rowins4}\ee
Recall  definition \eqref{Hm} of the $N\times N$ matrices $H_m(\zratio)$.
Then  \eqref{rowins4} is equivalent to    \cite[eqn.~(2.23)--(2.25)]{NY}
\be H_\ell(\zratio)H_\ell(b)=H_{\ell+1}(b')H_\ell(\zratio') .  \label{rowins5}\ee
In the extreme case $\ell=N$ there is no $b'$ left and the correct
interpretation is $H_{N+1}(b')=I=$ the $N\times N$ identity matrix.

As in the end of Section \ref{Kirillov corres}, define the ratio variables of the arrays  by
\be \text{ $\zratio_{\ell\ell}(n)=z_{\ell\ell}(n)$ and
$\zratio_{k\ell}(n)=z_{k\ell}(n)/z_{k-1,\ell}(n)$ for $1\le \ell<k$. }\label{xz}\ee

Applying \eqref{rowins5} to
  the  upper left corner of Figure \ref{NYfig3} gives
  \[ H_1(\zratio_1(0))H_1(a_1(1))=H_2(a_2(1))H_1(\zratio_1(1)).\]
  Left multiply this identity
by $H_2(\zratio_2(0))$, $H_3(\zratio_3(0))$, $\dotsc$,  right multiply by $H_1(a_1(2))$, $H_1(a_1(3))$,
$\dotsc$, and apply \eqref{rowins5}
repeatedly on the right-hand side.  This gives the following identity for all $m\ge 1$:
\be
\prod_{i=0}^m H_{m+1-i}(\zratio_{m+1-i}(0)) \cdot \prod_{j=1}^m  H_1(a_1(j))
= \prod_{j=1}^m  H_{m+2}(a_{m+2}(j)) \cdot  \prod_{i=0}^m H_{m+1-i}(\zratio_{m+1-i}(m))
\label{Heq1}\ee

If $m=N-1$ then $H_{m+2}(a_{m+2}(j))=I$ and the first product on the right disappears.
If $m<N-1$ then apply \cite[Thm.~2.4]{NY}  to the lower right $(N-m-1)\times(N-m-1)$ block of the
first product on the right.  This gives vectors $p^{m+\ell+1},\dotsc, p^{m+2}$ such that  $p^i=(p^i_i,\dotsc, p^i_N)$,
\[ \prod_{j=1}^m  H_{m+2}(a_{m+2}(j)) =  \prod_{j=0}^{\ell-1}  H_{m+\ell+1-j}(p^{m+\ell+1-j}),  \]
and $\ell=m\wedge(N-m-1)$.
Substituting this back into \eqref{Heq1} gives
\be
\prod_{i=0}^m H_{m+1-i}(\zratio_{m+1-i}(0)) \cdot \prod_{j=1}^m  H_1(d^{[j]})
= \prod_{j=0}^{\ell-1}  H_{m+\ell+1-j}(p^{m+\ell+1-j}) \cdot  \prod_{i=0}^m H_{m+1-i}(\zratio_{m+1-i}(m)).
\label{Heq2}\ee
Let $H$ denote the matrix on the left. On the right we  have a descending sequence of subscripts $(2m+1)\wedge N,\dotsc,1$.  We can appeal to \cite[Prop.~1.6]{NY}  to conclude that the vectors on the right-hand side are uniquely determined.   In particular, $\zratio_{m+1}(m)=(\zratio_{k,m+1}(m))_{k=m+1}^N$   is given by
\be
\zratio_{m+1,m+1}(m)  = \frac{\tau^{m+1}_{m+1}}{\tau^{m}_{m+1}} \,,  \quad
 \zratio_{k,m+1}(m) = \frac{\tau^{m+1}_{k}\tau^{m}_{k-1}}{\tau^{m}_{k}\tau^{m+1}_{k-1}}
 \quad\text{ for $m+1<k\le N$,}
 \label{x9}\ee
where  $\tau^i_j=\det H^{[1,i]}_{[j-i+1,j]}$, $i\le j$,  are minor determinants of the matrix $H$
over rows  $1,\dotsc, i$ and columns $j-i+1, \dotsc, j$.
Switching back to $\zarr$ via \eqref{xz} gives
\be
  \zarr_{k,m+1}(m) = \frac{\tau^{m+1}_{k}}{\tau^{m}_{k}}
 \quad\text{ for $m+1\le k\le N$.}
 \label{x9.1}\ee
This is the function $V_{k,m}$ defined in \eqref{zV}.
(Prop.~1.6 of \cite{NY} needs a hypothesis on the minors of $H$.  This hypothesis can be
checked from \eqref{tau-2} below with the help of Figure \ref{NYfig7}.)

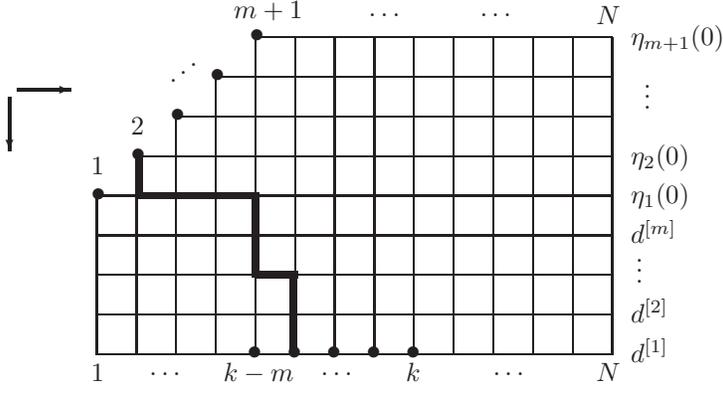
\begin{figure}
\begin{center}
\begin{picture}(200,120)(0,-15)
\multiput(0,0)(0,15){5}{\line(1,0){195}}
\put(202, -3){$d^{[1]}$}
\put(202, 12){$d^{[2]}$}   \put(204, 27){$\vdots$}
\put(202, 42){$d^{[m]}$}
\put(202, 57){$\zratio_1(0)$} \put(202, 72){$\zratio_2(0)$}  \put(207, 93){$\vdots$}
 \put(202, 117){$\zratio_{m+1}(0)$}
\put(15,75){\line(1,0){180}}
\put(30,90){\line(1,0){165}}  \put(45,105){\line(1,0){150}}\put(60,120){\line(1,0){135}}
\put(0,0){\line(0,1){60}}  \put(15,0){\line(0,1){75}}  \put(30,0){\line(0,1){90}} \put(45,0){\line(0,1){105}}
\multiput(60,0)(15,0){10}{\line(0,1){120}}
\multiput(-2,58)(15,15){5}{\large$\bullet$}
\put(-2,68){$1$} \put(13,83){$2$} \put(27, 102){$\iddots$}
 \put(52,127){$m+1$} \put(103, 128){$\dots$} \put(145, 128){$\dots$}  \put(189,125){$N$}
 \multiput(57,-2)(15,0){5}{\large$\bullet$}
\put(-2,-10){$1$}  \put(20, -8){$\dots$}
\put(48,-10){$k-m$}  \put(85, -8){$\dots$}  \put(117,-10){$k$}   \put(150, -8){$\dots$}
 \put(189,-10){$N$}
{\linethickness{1.0pt}\put(-30,100){\vector(1,0){20}}
\put(-33,97){\vector(0,-1){20}}  }
{\linethickness{2.5pt}
\put(16,75){\line(0,-1){15}} \put(15,60){\line(1,0){46}}  \put(60,61){\line(0,-1){32}}
\put(60,30){\line(1,0){16}} \put(74.5,30){\line(0,-1){30}}
}
\end{picture}
\caption{The minor $\tau^{m+1}_k$ equals the sum of the weights of $(m+1)$-tuples
of disjoint down-right paths  $(\gamma^1,\dotsc, \gamma^{m+1})$, where
$\gamma^j$ is a path from vertex $j$ at the top to vertex $k-m-1+j$ at the bottom.
The thickset line displays one  admissible path $\gamma^2$ from top vertex $2$
to bottom vertex $k-m+1$.} \label{NYfig7}
\end{center} 
\end{figure}

We use a graphical representation to compute the minors $\tau^i_j$, in the spirit of the Lindstr\"om-Gessel-Viennot method, following Sect.~1.1 of \cite{NY}.
 The matrix $H$ is represented by an array of $2m+1$ right-adjusted rows, one row for each vector $\zratio_{m+1}(0),\dotsc, \zratio_{1}(0), d^{[1]}, \dotsc, d^{[m]}$ (see Figure \ref{NYfig7}).
For $1\le i\le m+1$, the vertices on row $i$ are assigned weights $\zratio_{m+2-i, m+2-i}(0), \dotsc, \zratio_{N, m+2-i}(0)$,  and for $m+2\le i\le 2m+1$, the vertices on row $i$ are assigned weights  $d_{i-m-1,1},\dotsc, d_{i-m-1,N}$. Note that due to initial elements missing from the $\eta$-vectors,  the top vertex of column $j$ is on row $m-j+2$ for $1\le j\le m$.    Combining (1.16) and (1.27) in \cite{NY} gives
\be
\tau^i_j=\det H^{[1,i]}_{[j-i+1,j]} = \sum_{(\gamma^1,\dotsc,\gamma^i)}
wt(\gamma^1,\dotsc,\gamma^i)  \label{tau-2}\ee
where the sum ranges over $i$-tuples of disjoint paths $\gamma^1,\dotsc,\gamma^i$ such that $\gamma^k$ goes from vertex $k$ at the top edge of the graph to vertex $j-i+k$ at the bottom edge, and the weight   $wt(\gamma^1,\dotsc,\gamma^i)$ of the $i$-tuple is the product of the weights on the vertices of the paths.

Now we find the asymptotics of the minors in \eqref{x9.1}.  We are proving \eqref{M-lim13} so the initial   $z_{k\ell}$-values are $z_{k\ell}=e^{t_{k\ell}-M\rho_{k\ell}}=e^{t_{k\ell}+M(\ell-(k+1)/2)}$.
From this we get the initial ratio variables
\[  \text{$\zratio_{\ell\ell}=z_{\ell\ell}=e^{t_{\ell\ell}+M(\ell-1)/2}$, and
$\zratio_{k\ell}=z_{k\ell}/z_{k-1,\ell}=e^{t_{k\ell}-t_{k-1,\ell}-M/2}$   for $k>\ell$.} \]
Note in particular that on the top $m+1$ rows of the array in Figure \ref{NYfig7}, all but the left edge weights  decay as $Ce^{-M/2}$.

Consider first $\tau^{m+1}_k$ for some $k>m+1$. One can check by induction on $m$ that every $(m+1)$-tuple of paths $(\gamma^1,\dotsc, \gamma^{m+1})$
 from vertices  $(1,\dotsc,m+1)$ on the top edge to vertices  $(k-m,\dotsc,k)$ on the bottom edge contains at least $m(m+1)/2+1$ vertices with weight $\zratio_{k\ell}$ with $\ell<k$.  Since the  $t_{k\ell}$ variables converge to a finite constant,  up to a constant multiple
 \[  wt(\gamma^1,\dotsc, \gamma^{m+1}) \le  C \prod_{\ell=1}^{m+1} e^{M(\ell-1)/2}
\cdot (e^{ -M/2})^{m(m+1)/2+1} \le  C e^{ -M/2}. \]
There is a fixed finite number of these $(m+1)$-tuples in \eqref{tau-2}, and so $\tau^{m+1}_k \le  C e^{ -M/2}$ for   $k>m+1$.

Next we establish a lower bound for $\tau^m_k$.  The $m$ rows of $d$-weights at the bottom of the array allow an $m$-tuple of paths that uses exactly $m(m-1)/2$ vertices with weight $\zratio_{k\ell}$ with $\ell<k$ (Figure \ref{NYfig8}). This $m$-tuple gives a positive constant lower bound:  $\tau^m_k\ge c>0$.

\begin{figure}
\begin{center}
\begin{picture}(200,140)(0,-15)
\multiput(0,0)(0,15){5}{\line(1,0){195}}
\put(202, -3){$d^{[1]}$}
\put(202, 12){$d^{[2]}$}   \put(204, 27){$\vdots$}
\put(202, 42){$d^{[m]}$}
\put(202, 57){$\zratio_1(0)$} \put(202, 72){$\zratio_2(0)$}  \put(207, 93){$\vdots$}
 \put(202, 117){$\zratio_{m+1}(0)$}

\put(15,75){\line(1,0){180}}
\put(30,90){\line(1,0){165}}  \put(45,105){\line(1,0){150}}\put(60,120){\line(1,0){135}}

\put(0,0){\line(0,1){60}}  \put(15,0){\line(0,1){75}}  \put(30,0){\line(0,1){90}} \put(45,0){\line(0,1){105}}
\multiput(60,0)(15,0){10}{\line(0,1){120}}

\multiput(-2,58)(15,15){4}{\large$\bullet$}
\put(-2,68){$1$} \put(13,83){$2$} \put(20, 96){$\iddots$}
 \put(42,113){$m$} \put(100, 128){$\dots$} \put(145, 128){$\dots$}  \put(189,125){$N$}

\multiput(57,-2)(15,0){4}{\large$\bullet$}
\put(-2,-10){$1$}  \put(15, -8){$\dots$}
\put(40,-10){$k-m+1$}  \put(85, -8){$\dots$}  \put(102,-10){$k$}   \put(143, -8){$\dots$}
 \put(189,-10){$N$}
{\linethickness{1.0pt}\put(-30,100){\vector(1,0){20}}
\put(-33,97){\vector(0,-1){20}}  }
{\linethickness{2.5pt}
\put(1,60){\line(0,-1){60}} \put(0,1){\line(1,0){61}}
\put(16,75){\line(0,-1){60}} \put(15,15){\line(1,0){61}}    \put(75,15){\line(0,-1){15}}
\put(31,90){\line(0,-1){60}} \put(30,30){\line(1,0){61}}    \put(90,30){\line(0,-1){30}}
\put(46,105){\line(0,-1){60}} \put(45,45){\line(1,0){61}}    \put(105,45){\line(0,-1){45}}
}
\end{picture}
\caption{An $m$-tuple of  paths for the minor $\tau^{m}_k$.}  \label{NYfig8}
\end{center} 
\end{figure}
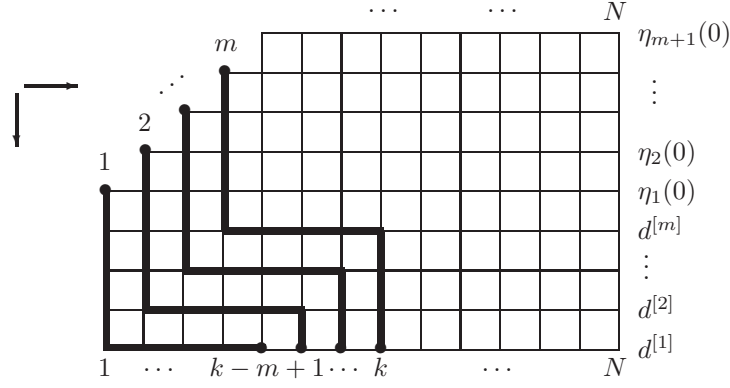

Combination of the first two bounds gives
\be    z_{k,m+1}(m) = \frac{\tau^{m+1}_{k}}{\tau^{m}_{k}} \le  C e^{ -M/2} \to 0
 \quad\text{ for $m+1< k\le N$.}
 \label{x10}\ee

It remains to consider the case $k=m+1$.  Minor $\tau^{m+1}_{m+1}$ has a
unique admissible  $(m+1)$-tuple, namely $m+1$ vertical paths. Consequently
\begin{align*}
\tau^{m+1}_{m+1} = \prod_{1\le\ell\le k\le m+1} \zratio_{k\ell} \cdot
\prod_{\substack{1\le i\le m\\ 1\le j\le m+1}} d_{i,j}
\; = \; e^{\sum_{\ell=1}^{m+1} t_{m+1,\ell} } \cdot \prod_{\substack{1\le i\le m\\ 1\le j\le m+1}} d_{i,j}.
\end{align*}
For $\tau^m_{m+1}$ the $m$-tuple in Figure \ref{NYfig8} has weight
\[   \prod_{1\le\ell\le k\le m} \zratio_{k\ell} \cdot
\prod_{\substack{1\le i\le m\\ 1\le j\le m+1}} d_{i,j}
\; = \; e^{\sum_{\ell=1}^{m} t_{m,\ell} } \cdot \prod_{\substack{1\le i\le m\\ 1\le j\le m+1}} d_{i,j}.\]
This $m$-tuple is minimal in its use of weights $\zratio_{k\ell}$ with $\ell<k$.  Other admissible $m$-tuples for  $\tau^m_{m+1}$ necessarily use more of such weights and
consequently pick up more $e^{-M/2}$ factors.   From all this
\be \begin{aligned}   z_{m+1,m+1}(m) &= \; \frac{\tau^{m+1}_{m+1}}{\tau^{m}_{m+1}}
\;=\; \frac{ e^{\sum_{\ell=1}^{m+1} t_{m+1,\ell} } \cdot \prod_{\substack{1\le i\le m\\ 1\le j\le m+1}} d_{i,j} }
{ e^{\sum_{\ell=1}^{m} t_{m,\ell} } \cdot \prod_{\substack{1\le i\le m\\ 1\le j\le m+1}} d_{i,j}
+ O(e^{-M/2})  }\\
&\underset{M\to\infty,\,t\to t^0}\longrightarrow  \;
\frac{ e^{\sum_{\ell=1}^{m+1} t^0_{m+1,\ell} } }{ e^{\sum_{\ell=1}^{m} t^0_{m,\ell} }}
\;=\; \frac{e^0}{e^0}\;=\; 1.
\end{aligned}  \label{x11}\ee
\eqref{x10} and \eqref{x11} together verify \eqref{M-lim13} and complete the proof of Lemma \ref{M-lim13-lm}.
\end{proof}

\end{document}